\DeclareMathOperator{\id}{id}
\DeclareMathOperator*{\essinf}{ess\,inf}
\newcommand{\norm}[1]{\left\lVert#1\right\rVert}
\newcommand{\normdot}{{\|\!\cdot\!\|}}
\newcommand*\diff{\mathop{}\!\mathrm{d}}
\newcommand{\Leb}{\mathscr{L}}
\newcommand{\N}{\mathbb{N}}
\newcommand{\R}{\mathbb{R}}
\newcommand{\p}{\mathtt p} 
\newcommand{\de}{\ensuremath{\, \mathrm d}} 
\newcommand\restr[2]{{
  \left.\kern-\nulldelimiterspace 
  #1 
  \right|_{#2} 
  }}
\DeclarePairedDelimiter\abs{\lvert}{\rvert}%
\let\oldabs\abs
\def\abs{\@ifstar{\oldabs}{\oldabs*}}
\newcommand{\cd}{\mathsf{CD}}
\newcommand{\MCP}{\mathsf{MCP}}
\newcommand{\X}{\mathsf{X}}
\newcommand{\J}{\mathcal{J}}
\newcommand{\Lip}{\mathsf {Lip}}
\newcommand{\di}
{\mathsf d} 
\newcommand{\m}{\mathfrak m} 
\DeclareMathOperator{\Geo}{Geo}
\newcommand{\Prob}{\mathscr{P}}
\newcommand{\dis}{\mathcal D}
\newcommand{\sF}{sub-Fin\-sler }
\newcommand{\sinom}{\sin_\Omega}
\newcommand{\cosom}{\cos_\Omega}
\newcommand{\sinomp}{\sin_{\Omega^\circ}}
\newcommand{\cosomp}{\cos_{\Omega^\circ}}
\newcommand{\e}{{\rm e}}
\newcommand{\hei}{\mathbb{H}}
\renewcommand{\subset}{\subseteq}
\renewcommand{\epsilon}{\varepsilon}
\renewcommand{\phi}{\varphi}
\title{\textbf{\normalsize{\uppercase{Curvature exponent of sub-Finsler Heisenberg groups}}}}
\date{\today}
\author{Samu\"el Borza\footnote{Faculty of Mathematics, University of Vienna. \textit{E-mail}:  \href{mailto:samuel.borza@univie.ac.at}{samuel.borza@univie.ac.at}}, \ 
Mattia Magnabosco\footnote{Mathematical Institute, University of Oxford. \textit{E-mail}:  \href{mailto:mattia.magnabosco@maths.ox.ac.uk}{mattia.magnabosco@maths.ox.ac.uk}}, \ 
Tommaso Rossi\footnote{Laboratoire Jacques-Louis Lions, Sorbonne Universit\'e. \textit{E-mail}: \href{mailto:tommaso.rossi@inria.fr}{tommaso.rossi@inria.fr}} \ 
and Kenshiro Tashiro\footnote{Okinawa Institute of Science and Technology (OIST). \textit{E-mail}:  \href{mailto:kenshiro.tashiro@oist.jp}{kenshiro.tashiro@oist.jp}}}
\newtheoremstyle{remark}
        {10pt}
        {10pt}
        {}
        {}
        {\itshape}
        {.}
        {.4em}
        {}
\newtheoremstyle{proof}
        {10pt}
        {10pt}
        {}
        {}
        {\itshape}
        {.}
        {.4em}
        {}
\newtheoremstyle{definition}
        {10pt}
        {10pt}
        {}
        {}
        {\bfseries}
        {.}
        {.4em}
        {}
\newtheoremstyle{theorem}
        {10pt}
        {10pt}
        {\slshape}
        {}
        {\bfseries}
        {.}
        {.4em}
        {}
\theoremstyle{theorem}
\newtheorem{theorem}{Theorem}[section]
\newtheorem{prop}[theorem]{Proposition}
\newtheorem{corollary}[theorem]{Corollary}
\newtheorem{lemma}[theorem]{Lemma}
\theoremstyle{definition}
\newtheorem{definition}[theorem]{Definition}
\theoremstyle{remark}
\newtheorem{remark}[theorem]{Remark}
\theoremstyle{proof}
\newtheorem*{pro}{Proof}
 {\popQED\end{pro}}
\renewcommand\xleftrightarrow[2][]{%
  \ext@arrow 9999{\longleftrightarrowfill@}{#1}{#2}}
\newcommand\longleftrightarrowfill@{%
  \arrowfill@\leftarrow\relbar\rightarrow}
\begin{document}

\maketitle

\begin{abstract} 
    The curvature exponent $N_{\mathrm{curv}}$ of a metric measure space is the smallest number $N$ for which the measure contraction property $\MCP(0,N)$ holds. In this paper, we study the curvature exponent of sub-Finsler Heisenberg groups equipped with the Lebesgue measure. We prove that $N_{\mathrm{curv}} \geq 5$, and  the equality holds if and only if the corresponding sub-Finsler Heisenberg group is actually sub-Riemannian.  Furthermore, we show that for every $N\geq 5$, there is a sub-Finsler structure on the Heisenberg group such that $N_{\mathrm{curv}}=N$.
\end{abstract}


\section{Introduction}

It is well-known that, in Riemannian geometry, many important results are a consequence of an assumption on the curvature of the space. For instance, a lower bound on the Ricci curvature tensor, i.e. $\mathrm{Ric} \geq K$, implies a plethora of fundamental theorems, such as the Bonnet-Myers theorem, Brunn–Minkowski and Prékopa–Leindler inequalities, Poincaré inequalities, and Lévy-Gromov's isoperimetric inequalities, see  \cite{myers,CEMCS,Hebeybook,levy-gromov}. 

In recent decades, with the aim of extending these results to non-Riemannian spaces, a lot of effort has been put into introducing and studying synthetic notions of curvature bounds that generalize the classical ones from Riemannian geometry, starting from the seminal contributions of \cite{sturm2006-1, sturm2006, lott--villani2009}. In these works, using the language of optimal transport, the authors introduced the curvature-dimension condition $\mathsf{CD}(K, N)$, generalizing the Riemannian conditions ``$\mathrm{Ric} \geq K$ and $\mathrm{dim} \leq N$'' to metric measure spaces. Subsequently, Ohta in \cite{ohta2007} introduced the measure contraction property $\mathsf{MCP}(K, N)$, which is another synthetic condition that is weaker than $\mathsf{CD}(K, N)$ and is based on a non-smooth characterization of the Bishop--Gromov inequality. More precisely, in a metric measure space $(\X, \di, \mathfrak{m})$, the $\mathsf{MCP}(0, N)$ condition is, roughly speaking, equivalent to having $\mathfrak{m}(A_{t, x}) \geq t^N \mathfrak{m}(A),$ for every Borel set $A \subseteq \X$, every $x \in \X$ and every $t \in \interval{0}{1}$, where $A_{t, x} := \{\gamma(t) \mid \gamma : \interval{0}{1} \to \X \text{ is a minimizing geodesic}, \gamma(0) = x, \gamma(1) \in A\}$ is the $t$-intermediate set of $A$ from $x$.

Sub-Riemannian and sub-Finsler manifolds are broad generalizations of Riemannian and Finsler manifolds, where a smoothly varying norm is defined only on a subset of preferred directions, called distribution.
Because of this singular structure, in this setting it is not possible to have a theory of Ricci curvature akin to the one in Riemannian geometry. The prototypical example is the three-dimensional Heisenberg group $\mathbb{H}$, which is the Lie group whose Lie algebra is generated by $X$, $Y$, and $Z$, with the only non-zero bracket being $[X, Y] = Z$. Fixing an inner product (resp. a norm) on $\mathcal{D} := \mathrm{span}\{X, Y\} \cong \mathbb{R}^2$ induces a sub-Riemannian (resp. sub-Finsler) structure on $\mathbb{H}$. As will be made clearer in the next sections, the geometry of these spaces, the shape of geodesics, and their regularity are tightly tied to the choice of the sub-Finsler norm. In this context, there is an intrinsic reference measure namely its Haar measure, which is proportional to the Lebesgue measure $\Leb^3$. This defines a metric measure space and it is therefore natural to ask whether some synthetic curvature-dimension conditions, such as the $\mathsf{CD}$ or $\mathsf{MCP}$ conditions, could hold in the sub-Riemannian and sub-Finsler Heisenberg groups, or more generally, in \sF manifolds (equipped with a smooth measure).

The validity of synthetic curvature-dimension conditions in the sub-Riemannian Heisenberg group was first studied in \cite{juillet2009}, where it was shown that in this metric measure space no $\mathsf{CD}(K,N)$ condition can hold and that $\mathsf{MCP}(K, N)$ holds if and only if $K \leq 0$ and $N \geq 5$. The failure of the $\cd(K,N)$ condition was then proven for every sub-Riemannian manifold equipped with a positive smooth measure, see \cite{juillet2020,magnaboscorossi,rizzistefani}. In a series of works \cite{borzatashiro, magnabosco2023failure, borza2024measure}, we have initiated the study of these synthetic curvature-dimension bounds in sub-Finsler geometry. We found that, in a sub-Finsler Heisenberg group, no $\mathsf{CD}(K, N)$ condition can hold, whatever norm $\normdot$ is fixed on $\mathcal{D}$. Furthermore, $\mathsf{MCP} (K, N)$ would also fail for every $K$ and $N$ if $\normdot$ is neither $C^1$ nor strongly convex. However, if $\normdot$ is $C^{1, 1}$ and strongly convex, the $\mathsf{MCP}(K, N)$ condition would hold for some constants $K$ and $N$. We have also highlighted mixed behaviours when $\normdot$ is $C^1$ and strongly convex but not $C^{1, 1}$.

In this paper, we study the \emph{curvature exponent} of \sF Heisenberg groups. This is defined as the smallest $N \in \ointerval{1}{+\infty}$ such that $\mathsf{MCP}(0, N)$ is satisfied. Note that, because of the homogeneous structure of $\mathbb{H}$, it is actually enough to focus on $\mathsf{MCP}(0, N)$. In metric measure spaces, a lower bound of the curvature exponent is given by the geodesic dimension, see \cite[Thm.\ 4.20]{barilari2022unified}. The work of Juillet \cite{juillet2009} shows that the curvature exponent of the sub-Riemannian Heisenberg group is $N_{\mathrm{curv}} = 5$ and coincides with its geodesic dimension. We have conjectured in \cite[Conjecture 1.5]{borza2024measure} that among all the sub-Finsler Heisenberg groups, only the sub-Riemannian one has $N_{\mathrm{curv}} = 5$. This \textit{rigidity} result is proven in Section \ref{sec:rigidity} (see Theorem \ref{thm:rigidityinsection}) of the present work.

\begin{theorem}[Rigidity of the curvature exponent]
\label{thm:rigidity}
    Any Heisenberg group $\hei$, equipped with a sub-Finsler norm $\normdot$ and the Lebesgue measure $\Leb^3$, satisfies $N_{\mathrm{curv}} \geq 5$. Furthermore, $N_{\mathrm{curv}} = 5$ if and only if $\normdot$ is induced by a scalar product, i.e., the sub-Finsler Heisenberg group $\hei$ is actually sub-Riemannian.
\end{theorem}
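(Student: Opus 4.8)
The plan is to reduce the $\MCP(0,N)$ condition to a pointwise Jacobian inequality and then analyse its small‑time asymptotics. By left–invariance of the distance and of $\Leb^3$ it suffices to test $\MCP(0,N)$ based at the identity $e$, and in this homogeneous setting this is equivalent to
\begin{equation*}
\mathcal J(t\lambda)\ \ge\ t^{\,N-3}\,\mathcal J(\lambda)\qquad\text{for a.e. }\lambda\text{ and all }t\in(0,1),
\end{equation*}
where $\mathcal J(\lambda):=\lvert\det d\exp_e(\lambda)\rvert$; here one uses the homogeneity $\exp_e(t\lambda)=\gamma_\lambda(t)$ together with the fact that the geodesic homothety at $e$ equals $\exp_e\circ(\text{multiplication by }t)\circ\exp_e^{-1}$, so that its Jacobian is $t^{3}\,\mathcal J(t\lambda)/\mathcal J(\lambda)$, which must dominate $t^{N}$. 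The core computational step is to make $\mathcal J$ explicit. Parametrising the normal geodesics issued from $e$ by $(\eta,\phi_0,w)$ — with $\eta>0$ the length, $w$ the (conserved) vertical momentum, and $\phi_0$ the position on $\partial\Omega^\circ$ in a parametrisation $q$ of the dual unit ball normalised by $q\times q'\equiv 1$ — one has, up to a fixed rotation, $\bar\gamma(1)=\tfrac{\eta}{w}\bigl(q(\phi_0+w)-q(\phi_0)\bigr)$ and $z(1)=\tfrac{\eta^{2}}{2w^{2}}\bigl(w-q(\phi_0)\times q(\phi_0+w)\bigr)$. Evaluating the $3\times3$ Jacobian in these coordinates (and dividing by the elementary factor $\eta$ relating $(\eta,\phi_0)$ to the linear covector coordinates) should give
\begin{equation*}
\mathcal J(\eta,\phi_0,w)\ =\ \eta^{2}\,\frac{\bigl\lvert(1-c)(1+d)-b\,(w-a)\bigr\rvert}{w^{4}},
\end{equation*}
with $a=q_0\times q_1$, $b=q_0'\times q_1'$, $c=q_0\times q_1'$, $d=q_0'\times q_1$, $q_0=q(\phi_0)$, $q_1=q(\phi_0+w)$; in the sub‑Riemannian case $q$ is the unit circle and this collapses to the classical $\eta^{2}(2-2\cos w-w\sin w)/w^{4}$.

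For the lower bound I would Taylor–expand the numerator using $q''=-\rho\,q$, where $\rho>0$ is the associated curvature function ($\rho\equiv1$ for the circle). A short computation gives $(1-c)(1+d)-b(w-a)=\tfrac{\rho(\phi_0)^{2}}{12}\,w^{4}+O(w^{5})$, so $\mathcal J(t\lambda)/\mathcal J(\lambda)\sim t^{2}\times(\text{positive constant})$ as $t\to0$; since this ratio must be $\ge t^{N-3}$ for all small $t$, necessarily $N-3\ge 2$, i.e. $N\ge 5$ (it is enough that $\rho>0$ on a set of $\phi_0$ of positive measure, which always holds). Hence $N_{\mathrm{curv}}\ge 5$.

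For the rigidity statement, suppose $N_{\mathrm{curv}}=5$. Since the inequality $\mathcal J(t\lambda)\ge t^{2}\mathcal J(\lambda)$ is closed, $\MCP(0,5)$ itself holds, and the criterion with $N=5$ reads: for a.e. $\phi_0$ the map $w\mapsto F(w;\phi_0):=\lvert(1-c)(1+d)-b(w-a)\rvert/w^{4}$ is non‑increasing on its interval of definition $(0,\ell)$. Carrying the expansion one order further,
\begin{equation*}
F(w;\phi_0)\ =\ \frac{\rho(\phi_0)^{2}}{12}\ +\ \frac{\rho(\phi_0)\,\rho'(\phi_0)}{12}\,w\ +\ O(w^{2}),
\end{equation*}
so monotonicity at $w=0^{+}$ forces the first‑order coefficient to be non‑positive: $\rho\,\rho'\le 0$, i.e. $(\rho^{2})'\le 0$ everywhere. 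As $\rho^{2}$ is $\ell$‑periodic it must be constant, hence $\rho$ is constant, which via $q''=-\rho\,q$ means that $\partial\Omega^\circ$ — and therefore $\Omega^\circ$ and its polar dual $\Omega$ — is an ellipse, i.e. $\normdot$ is induced by a scalar product. The converse implication is Juillet's theorem \cite{juillet2009}, which gives $N_{\mathrm{curv}}=5$ in the sub‑Riemannian case, and this completes the proof.

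The step I expect to be the main obstacle is the Jacobian computation — choosing the parametrisation adapted to $\Omega^\circ$ and reducing the $3\times3$ determinant to the compact form $(1-c)(1+d)-b(w-a)$ — together with the two‑term Taylor expansion that extracts the coefficient $\rho\rho'/12$. A secondary technical point is regularity: for a merely $C^{1,1}$ norm $\rho$ is only $L^{\infty}$, so the deduction ``$(\rho^{2})'\le 0$ a.e. $\Rightarrow$ $\rho^{2}$ constant'' must be justified with care — e.g. by noting that $\lim_{w\to0^{+}}F(w;\phi_0)=\rho(\phi_0)^{2}/12$ at Lebesgue points of $\rho$, combining this with the monotonicity and a $BV$‑type argument, or simply by first reducing to smooth strongly convex norms by approximation.
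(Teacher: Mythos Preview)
Your argument is essentially correct for smooth (say $C^2$) strongly convex norms, and in fact coincides with the heuristic the paper itself outlines in its introduction: your curvature function $\rho$ is the paper's $C_\circ'$ (the identity $q''=-C_\circ'\,q$ follows from Proposition~\ref{prop:difftrig}), and ``$(\rho^2)'\le 0$ plus periodicity $\Rightarrow$ $\rho$ constant $\Rightarrow$ ellipse'' is exactly the statement that $C_\circ$ is affine, which Proposition~\ref{prop:scalar} identifies with the Euclidean case.

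The genuine gap is the passage from smooth to merely $C^1$ strongly convex norms, where $\rho=C_\circ'$ is only $L^\infty$ and $\rho'$ does not exist. Neither of your two suggested fixes works as stated. The approximation route fails for a structural reason: the curvature exponent is only \emph{lower} semicontinuous under pmGH convergence (Lemma~\ref{lem:lsc_curv_exp}), so knowing $N_{\mathrm{curv}}^\varepsilon>5$ for smooth approximants gives only $N_{\mathrm{curv}}\le\liminf N_{\mathrm{curv}}^\varepsilon$, which is the wrong inequality; the continuity result of Section~\ref{sec:continuity} cannot help here since it requires the \emph{limit} norm to be $C^k$ with $k\ge 3$. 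The ``BV-type'' sketch is also incomplete: monotonicity of $w\mapsto F(w;\phi_0)$ together with $\lim_{w\to 0^+}F(w;\phi_0)=\rho(\phi_0)^2/12$ at Lebesgue points gives only the one-sided inequality $12\,F(w;\phi_0)\le \rho(\phi_0)^2$, and the symmetry $\J_R(\phi,\psi)=\J_R(\phi+\psi,-\psi)$ yields the same type of bound from the other endpoint. There is no obvious lower bound on $F(w;\phi_0)$ in terms of $\rho$ at another angle that would force $\rho$ to be constant; indeed the variance-type quantity $P(\phi,\omega)/\omega^4$ need not dominate any average of $\rho^2$.

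The paper's proof replaces the pointwise Taylor expansion by a discrete one. The key step (Lemma~\ref{lemma:D2>H}) is that if $C_\circ$ is not affine there is $H>0$ such that the second finite difference $D_\delta^2 C_\circ$ exceeds $H$ at some angle for arbitrarily small $\delta$. This is leveraged (Lemmas~\ref{lemma:Deltat<=}--\ref{lemma:<=Deltat<=}) to produce, for each small $\varepsilon$, an angle $\phi$ that is a density point of $\{C_\circ'\le A+\varepsilon\}$ and an $\omega\in(0,2\delta]$ with $\Delta_\omega C_\circ(\phi)=(A+B)\omega$, $B\ge H\delta/2$. The integral formula for $P$ in Proposition~\ref{prop:formulaJRdJRint} (rewritten via Lemma~\ref{lem:integrationbypartsmanymanytimes}) then gives the quantitative two-sided estimate
\[
P(\phi,r\omega)\le \tfrac{1}{12}(A+2\varepsilon)^2 r^4\omega^4\quad\text{for small }r,\qquad
P(\phi,\omega)\ge \tfrac{1}{12}A^2\omega^4+\tfrac{1}{8}AB\,\omega^4,
\]
which, combined with the $O(\omega^6)$ remainder bound \eqref{eq:boundremainder}, yields $\J_R(\phi,r\omega)<r^4\J_R(\phi,\omega)$ for suitable $r$, contradicting $\MCP(0,5)$ via Proposition~\ref{prop:phoenix}. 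The case $a:=\essinf C_\circ'=0$ is handled separately by a simpler density-point argument. A minor omission in your write-up is the case of norms that are not $C^1$ or not strongly convex; the paper dispatches this in one line by invoking \cite[Thm.\ 1.1]{borza2024measure}, which gives $N_{\mathrm{curv}}=+\infty$ there.
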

The previous theorem shows the remarkable fact that it is possible to single out the sub-Riemannian Heisenberg group among all the sub-Finsler ones solely by examining its curvature exponent. In Section \ref{sec:continuity}, we introduce the topology of $C^k$-strong convergence on the set of norms and we prove that the curvature exponent is \emph{continuous} with respect to this topology (see Theorem \ref{thm:continuity}). Using this result, we are able to build a norm on $\mathcal D$, such that the associated \sF Heisenberg group has a prescribed curvature exponent (see Theorem \ref{thm:continuouscurvexpSMOOOOOOTH}).

\begin{theorem}[Sub-Finsler Heisenber group with prescribed curvature exponent]
\label{thm:INTROcontinuity}
    For any $N \in [5,+\infty)$, there exists a $C^\infty$ and strongly convex norm $\normdot$ such that the associated sub-Finsler Heisenberg group $(\hei, \di, \Leb^3)$ has curvature exponent $N_{\mathrm{curv}} = N$.
\end{theorem}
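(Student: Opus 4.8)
The plan is to combine the two announced results—the rigidity Theorem~\ref{thm:rigidity} (which pins down $N_{\mathrm{curv}}=5$ exactly at the sub-Riemannian point and gives $N_{\mathrm{curv}}\geq 5$ elsewhere) and the continuity of $N_{\mathrm{curv}}$ with respect to $C^k$-strong convergence (Theorem~\ref{thm:continuity})—via an intermediate value argument on a suitable one-parameter family of smooth strongly convex norms. First I would fix a reference $C^\infty$ strongly convex norm $\normdot_1$ whose associated sub-Finsler Heisenberg group has $N_{\mathrm{curv}}$ strictly larger than $5$; such a norm exists because, by the rigidity statement, \emph{every} non-Euclidean smooth strongly convex norm has $N_{\mathrm{curv}}>5$, and one may even arrange $N_{\mathrm{curv}}$ to be arbitrarily large by taking a norm whose unit ball is a smoothed $\ell^p$-ball with $p$ far from $2$ (one should record, perhaps from the analysis in Section~\ref{sec:rigidity} or \cite{borza2024measure}, an explicit lower bound on $N_{\mathrm{curv}}$ in terms of how far the norm is from Euclidean, so that the family to be constructed does reach every prescribed value).

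Next I would build a continuous path $t\mapsto\normdot_t$, $t\in[0,1]$, of $C^\infty$ strongly convex norms with $\normdot_0$ the Euclidean norm and $\normdot_1$ the chosen far-from-Euclidean norm, for instance by interpolating the support functions (or the gauge functions) of the unit balls, $h_t := (1-t)h_0 + t h_1$, which preserves smoothness and strong convexity and is continuous in every $C^k$ norm. Applying Theorem~\ref{thm:continuity}, the map $t\mapsto N_{\mathrm{curv}}(\normdot_t)$ is continuous on $[0,1]$; at $t=0$ it equals $5$ (Juillet's sub-Riemannian computation, also covered by Theorem~\ref{thm:rigidity}) and at $t=1$ it equals some value $N_1>N$. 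By the intermediate value theorem there is $t_\star\in(0,1)$ with $N_{\mathrm{curv}}(\normdot_{t_\star})=N$; since every $\normdot_t$ with $t>0$ is non-Euclidean, this norm is automatically $C^\infty$ and strongly convex, as required. For $N=5$ one simply takes the Euclidean norm itself.

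The main obstacle I anticipate is not the soft topological step but ensuring the range of $N_{\mathrm{curv}}$ along the family actually covers all of $[5,+\infty)$: continuity alone gives an \emph{interval} of attained values, so one needs a genuine unboundedness input, namely that $\sup_{\normdot} N_{\mathrm{curv}}(\normdot)=+\infty$. I would establish this by exhibiting a sequence of smooth strongly convex norms degenerating towards a polytope-like norm (say regularized $\ell^p$ balls with $p\to\infty$, or unit balls developing a nearly flat face while staying strongly convex with worsening modulus of convexity) and showing that the measure contraction ratio along geodesics directed through the flattening region decays faster than any fixed power $t^N$; concretely this means revisiting the Jacobian of the geodesic exponential map at the conjugate time and quantifying how the vanishing order of that Jacobian blows up as the norm degenerates, which should follow from the same geodesic/Jacobian analysis used to prove Theorem~\ref{thm:rigidity}. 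Once that quantitative blow-up is in hand—together with the already-available fact that $N_{\mathrm{curv}}$ is finite for each $C^{1,1}$ strongly convex norm—the intermediate value argument closes the proof.
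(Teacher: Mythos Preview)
Your approach is essentially the paper's: interpolate between the Euclidean norm and a smooth strongly convex norm with large $N_{\mathrm{curv}}$, then apply the continuity theorem (Theorem~\ref{thm:continuity}) and the intermediate value theorem. The only substantive step you leave open---the existence of smooth strongly convex norms with arbitrarily large $N_{\mathrm{curv}}$---is filled in the paper not by a degeneration-to-polytope argument but by an explicit family (Remark~\ref{rmk:arbitrarybignorm}): for each integer $h\ge 3$ one builds a $C^\infty$ strongly convex dual norm whose unit sphere locally is the graph $x=1-y^2-\tfrac{h^h}{2}y^h$, and a direct computation of the differential criterion of Proposition~\ref{prop:diffcharacMCP} at a specific angle $\omega_h$ gives $\omega_h\,\partial_\omega\J_R(0,\omega_h)/\J_R(0,\omega_h)=h/4+o(h)$, hence $N_{\mathrm{curv}}\to\infty$ as $h\to\infty$.
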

As a consequence, we show that for every $N \in [5,+\infty)$, there is indeed a metric measure space $(\X, \di, \mathfrak{m})$ having $N_{\mathrm{curv}} = N$. These results and their proofs are based on the analysis of metric measure spaces, convex geometry and other tools developed in \cite{borza2024measure}, see Section \ref{sec:preliminaries}. We can expect that similar conclusions will also be observed in other sub-Finsler Carnot groups and, more generally, in sub-Finsler geometry.

\paragraph{Strategy of the proof of Theorem \ref{thm:rigidity}.} We outline here the main idea behind the proof of our rigidity result, inspecting the case of a sub-Finsler Heisenberg group equipped with a \emph{smooth} and strongly convex norm $\normdot$. In a \sF Heisenberg group, the infinitesimal volume contraction of the $t$-intermediate set is controlled by the Jacobian of the exponential map $\mathcal{J}_t(r,\phi,\omega)$, see Definition \ref{def:exponentialmap} and Proposition \ref{prop:NONO}. Hence, the measure contraction property and the curvature exponent can be characterized in terms of this function, cf.\ Section \ref{sec:preliminaries} for more details. In particular, the curvature exponent $N_{\mathrm{curv}}$ is the smallest $N\in [5,\infty)$ such that 
\begin{equation}
\label{eq:mcp_ineq}
    \mathcal{J}_t(r,\phi,\omega) \geq t^{N}\mathcal{J}_1(r,\phi,\omega),\qquad\forall\, t\in [0,1],
\end{equation}
for all $(r,\phi,\omega)$ in the domain of the Jacobian. In the case where the norm is strongly convex and smooth, the Jacobian is a smooth map, so that performing a Taylor expansion around $\omega=0$, we obtain: 
\begin{equation*}\label{eq:introtaylor}
    \mathcal{J}_t(r,\phi,\omega)=\frac{r^3t^5C_\circ'(\phi)}{12}(C_\circ'(\phi) +C_\circ''(\phi)\omega t+o(\omega)),\qquad\text{as }\omega\to 0, \ \forall\,t\in [0,1],
\end{equation*}
where $o(\omega)$ is a uniform reminder and $C_\circ$ is the angle correspondence map, cf.\ \ref{def:correspondence}. Then,  \eqref{eq:mcp_ineq} is equivalent to asking that $N_{\mathrm{curv}}$ is the smallest $N\in [5,\infty)$ such that
\begin{equation}
\label{eq:introineq}
    (C_\circ'(\phi) +C_\circ''(\phi)\omega t+o(\omega))\geq t^{N-5}(C_\circ'(\phi) +C_\circ''(\phi)\omega+o(\omega)),\qquad \forall\,t\in [0,1].
\end{equation}
From this inequality, we easily see that $N_{\mathrm{curv}}=5$ if and only if $C_\circ''\equiv 0$, or equivalently that $C_\circ$ is affine. But, according to Proposition \ref{prop:scalar}, $C_\circ$ is affine if and only if $\normdot$ is induced by a scalar product and we conclude the proof. 

In the general case, where the norm is not smooth, we can not perform a Taylor expansion as in \eqref{eq:introtaylor}. Nonetheless, if $\normdot$ is not induced by a scalar product, we are still able to find an angle $\varphi$ where $C_\circ''(\varphi)>0$ in \emph{a weak sense} (see Lemma \ref{lemma:D2>H}) and exploit this behaviour to show that $N_{\mathrm{curv}}>5$. In particular, in Lemmas \ref{lemma:Deltat<=} and \ref{lemma:<=Deltat<=}, we provide crucial estimates which replace the Taylor expansion of $C_\circ$ in the non-smooth setting. These estimates, in turn, allow to reproduce an inequality akin to \eqref{eq:introineq} and finalize the proof.

\subsection*{Acknowledgments} 
We thank Prof. Sturm for raising the question of the existence of a metric measure space with fractional curvature exponent. This project has received funding from the European Research Council (ERC) under the European Union’s Horizon 2020 research and innovation program (grant agreement No. 945655). M.M. acknowledges support from the Royal Society through the Newton International Fellowship (award number: NIF$\backslash$R1$\backslash$231659). T.R. acknowledges from the ANR-DFG project ``CoRoMo'' (ANR-22-CE92-0077-01). K.T. is partially supported by JSPS KAKENHI grant numbers 18K03298, 19H01786, 23K03104.

\section{Preliminaries}
\label{sec:preliminaries}
\subsection{The \texorpdfstring{$\MCP(K,N)$}{MCP(K,N)} condition and curvature exponent} \label{sec:CD}

A metric measure space is a triple $(\X,\di,\m)$ where $(\X,\di)$ is a complete and separable metric space and $\m$ is a locally finite Borel measure on it. In the following, we denote by $C([0, 1], \X)$ the space of continuous curves from $[0, 1]$ to $\X$. For every $t \in [0, 1]$ we call $e_t \colon C([0, 1], \X) \to \X$ the evaluation map, i.e. $e_t(\gamma) := \gamma(t)$. A curve $\gamma\in C([0, 1], \X)$ is said to be a \textit{geodesic} if 
\begin{equation}
    \di(\gamma(s), \gamma(t)) = |t-s| \cdot  \di(\gamma(0), \gamma(1)) \quad \text{for every }s,t\in[0,1].
\end{equation}
We denote by $\Geo(\X)$ the space of all geodesics on $(\X,\di)$. The metric space $(\X,\di)$ is said to be geodesic if every pair of points $x,y \in \X$ can be connected with a curve $\gamma\in \Geo(\X)$.
We denote by $\Prob(\X)$ the set of Borel probability measures on $\X$ and by $\Prob_2(\X) \subset \Prob(\X)$ the set of those having finite second moment. We endow the space $\Prob_2(\X)$ with the Wasserstein distance $W_2$, defined by
\begin{equation}
\label{eq:defW2}
    W_2^2(\mu_0, \mu_1) := \inf_{\pi \in \mathsf{Adm}(\mu_0,\mu_1)}  \int \di^2(x, y) \, \de \pi(x, y),
\end{equation}
where $\mathsf{Adm}(\mu_0, \mu_1)$ is the set of all admissible transport plans between $\mu_0$ and $\mu_1$, namely all the measures $\pi \in \Prob(\X\times\X)$ such that $(\p_1)_\sharp \pi = \mu_0$ and $(\p_2)_\sharp \pi = \mu_1$, where $\p_i$, for $i=1,2
$, is the projection onto the $i$-th factor. The metric space $(\Prob_2(\X),W_2)$ is itself complete and separable, moreover, if $(\X,\di)$ is geodesic, then $(\Prob_2(\X),W_2)$ is geodesic as well. In this case, every geodesic $(\mu_t)_{t\in [0,1]}$ in $(\Prob_2(\X),W_2)$ can be represented with a measure $\eta \in \Prob(\Geo(\X))$, i.e.\ $\mu_t = (e_t)_\# \eta$. 

We present the measure contraction property, or $\MCP(K,N)$ for brevity, firstly introduced by Ohta \cite{ohta2007}. For every $K \in \R$, $N\in (1,\infty)$ and $t\in [0,1]$, the \emph{distortion coefficients} are the functions:

\begin{equation}\label{eq:tau}
    \tau_{K,N}^{(t)}(\theta):=t^{\frac{1}{N}}\left[\sigma_{K, N-1}^{(t)}(\theta)\right]^{1-\frac{1}{N}},\qquad\forall\,\theta\geq 0
\end{equation}
where
\begin{equation}
\sigma_{K,N}^{(t)}(\theta):= 
\begin{cases}

\displaystyle  +\infty & \textrm{if}\  N\pi^{2}\leq K\theta^{2}, \crcr
\displaystyle  \frac{\sin(t\theta\sqrt{K/N})}{\sin(\theta\sqrt{K/N})} & \textrm{if}\  0 < K\theta^{2} < N\pi^{2}, \crcr
t & \textrm{if}\ 
K =0,  \crcr
\displaystyle   \frac{\sinh(t\theta\sqrt{-K/N})}{\sinh(\theta\sqrt{-K/N})} & \textrm{if}\ K < 0.
\end{cases}
\end{equation}

\begin{definition}[$\mathsf{MCP}(K,N)$ condition, \cite{ohta2007}]
\label{def:mcp}
    Given $K\in\R$ and $N\in (1,\infty)$, a metric measure space $(\X,\di,\m)$ is said to satisfy the \emph{measure contraction property} $\mathsf{MCP}(K,N)$ if for every $x\in\text{spt}(\m)$ and every Borel set $A\subset\X$ with $0<\m(A)<\infty$, there exists a $W_2$-geodesic induced by $\eta \in \Prob(\Geo(\X))$ connecting $\delta_x$ and $\frac{\m|_A}{\m(A)}$ such that, for every $t\in[0,1]$,
    \begin{equation}
    \label{eq:mcp_def}
        \frac{1}{\m(A)}\m\geq(e_t)_\#\Big(\tau_{K,N}^{(t)}\big(\di(\gamma(0),\gamma(1))\big)^N\eta(\text{d}\gamma)\Big).
    \end{equation}
\end{definition}

\begin{remark}
\label{rmk:SIUUUUUUU}
Let us recall a useful equivalent formulation of the inequality \eqref{eq:mcp_def}, which holds whenever geodesics are unique, we refer the reader to \cite[Lem.\ 2.3]{ohta2007} for further details. Consider $x\in\text{spt}(\m)$ and a Borel set $A\subset\X$ with $0<\m(A)<\infty$. Assume that for every $y\in A$, there exists a unique geodesic $\gamma_{x,y}:[0,1]\to \X$ joining $x$ and $y$. Then, \eqref{eq:mcp_def} is verified for the marginals $\delta_x$ and $\frac{\m|_A}{\m(A)}$ if and only if 
\begin{equation}
\label{eq:tj_pantaloncini}
    \m\big(A'_{t,x})\big)\geq \int_{ A'}\tau_{K,N}^{(t)}(\di(x,y))^N \de\m(y),\qquad\text{for any Borel set }A'\subset A,
\end{equation}
where $A_{t,x}$ is the $t$-intermediate set defined by, for $A\subset \X$,
    \begin{align}
        A_{t,x} := 
        \{
            y \in \X \, : \, y = \gamma(t)
                \, , \, 
            \gamma \in \Geo(\X)
                \, , \, 
            \gamma(0)=x  
                \ \text{and} \  
            \gamma(1) \in A
        \} 
            \, .
    \end{align}
\end{remark}

On the one hand, it is well known that the measure contraction property has the scaling property:
If $(\X,\di,\m)$ is a $\MCP(K,N)$ space, for every $\alpha,\beta>0$ the scaled space $(\X,\alpha \di,\beta \m)$ is a $\MCP(\alpha^{-2} K,N)$ space.
On the other hand,
a Carnot group (including the Heisenberg group) has the dilation automorphism.
By scaling a metric with the dilation,
we can deduce that a Carnot group satisfies $\MCP(K,N)$ for some $K<0$ if and only if $\MCP(0,N)$ also holds (see for example \cite[Prop.\ 3.17]{borza2024measure}).
Thus finding the optimal number $N$ is one of the central interest of Carnot groups.
Such a number is called the curvature exponent.

\begin{definition}[Curvature exponent] \label{def:curvatureexponent}
    Given a metric measure space  $(\X,\di,\m)$, its \emph{curvature exponent} is defined as 
    \begin{equation}
        N_{\mathrm{curv}}:=\inf\{N\in  (1,+\infty):(\X,\di,\m) \text{ satisfies } \MCP(0,N)\},
    \end{equation}
    where in particular $N_{\mathrm{curv}}=+\infty$ if $(\X,\di,\m)$ does not satisfy $\MCP(0,N)$ for any $N\in (1, \infty)$.
\end{definition}

\begin{remark}
    \label{rmk:min=inf_curv_exp}
    Observe that, for a metric measure space $(\X,\di,\m)$ satisfying $\MCP(0,N)$ for some $N\in\N$, the curvature exponent is a minimum, meaning that $(\X,\di,\m)$ satisfies $\MCP(0,N_\mathrm{curv})$.
\end{remark}

\begin{lemma}
\label{lem:lsc_curv_exp}
    Let $\{(\X,\di_\varepsilon,\m_\varepsilon)\}_{\varepsilon\geq 0}$ a sequence of metric measure spaces. Assume that 
    \begin{equation}
        (\X,\di_\varepsilon,x_\varepsilon,\m_\varepsilon)\xrightarrow[\text{pmGH}]{\varepsilon\to 0}(\X,\di_0,x_0,\m_0).
    \end{equation}
    Then, $N_\mathrm{curv}^0\leq \liminf_{\varepsilon\to 0}N_\mathrm{curv}^\varepsilon$.
\end{lemma}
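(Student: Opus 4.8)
The plan is to deduce the lower semicontinuity of $N_\mathrm{curv}$ from the stability of the $\MCP(0,N)$ condition under pointed measured Gromov--Hausdorff convergence. Write $L:=\liminf_{\varepsilon\to 0}N_\mathrm{curv}^\varepsilon$. If $L=+\infty$ there is nothing to prove, so I would assume $L<+\infty$ and fix an arbitrary $N>L$; it then suffices to show that the limit space $(\X,\di_0,\m_0)$ satisfies $\MCP(0,N)$, since this gives $N_\mathrm{curv}^0\le N$ and letting $N\downarrow L$ yields the claim.

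First I would select a subsequence $\varepsilon_k\to 0$ with $N_\mathrm{curv}^{\varepsilon_k}\to L$, so that $N_\mathrm{curv}^{\varepsilon_k}<N$ for all $k$ large enough. By Definition \ref{def:curvatureexponent} the infimum defining $N_\mathrm{curv}^{\varepsilon_k}$ is then strictly below $N$, hence there exists $N_k<N$ for which $(\X,\di_{\varepsilon_k},\m_{\varepsilon_k})$ satisfies $\MCP(0,N_k)$. Since for $K=0$ one has $\tau_{0,N'}^{(t)}(\theta)^{N'}=t^{N'}$, which is non-increasing in $N'\in(1,\infty)$ for every $t\in[0,1]$, the condition $\MCP(0,N')$ is monotone in $N'$: from $\MCP(0,N_k)$ and $N_k\le N$ one obtains $\MCP(0,N)$, keeping the very same geodesic plan in Definition \ref{def:mcp}. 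Therefore $(\X,\di_{\varepsilon_k},\m_{\varepsilon_k})$ satisfies $\MCP(0,N)$ for all sufficiently large $k$.

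At this point I would invoke the stability of the measure contraction property under pointed measured Gromov--Hausdorff convergence (which is by now classical, cf.\ \cite{ohta2007}): since $(\X,\di_{\varepsilon_k},x_{\varepsilon_k},\m_{\varepsilon_k})\to(\X,\di_0,x_0,\m_0)$ in the pmGH sense and every space in the subsequence satisfies $\MCP(0,N)$, the limit $(\X,\di_0,\m_0)$ satisfies $\MCP(0,N)$ as well. This gives $N_\mathrm{curv}^0\le N$, and since $N>L$ was arbitrary we conclude $N_\mathrm{curv}^0\le L=\liminf_{\varepsilon\to 0}N_\mathrm{curv}^\varepsilon$. The only substantive input is the cited stability result; the remaining steps are routine manipulations of the defining infima together with the monotonicity of the distortion coefficients in the dimension parameter. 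The point deserving the most care is verifying that the hypotheses of the stability theorem are met along the sequence (e.g.\ properness, which holds here because a complete geodesic space carrying an $\MCP(0,N)$ measure with $N<\infty$ is doubling, hence proper).
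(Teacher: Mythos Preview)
Your proof is correct and follows essentially the same route as the paper's: both reduce the claim to the stability of $\MCP$ under pmGH convergence after passing to a subsequence realizing the $\liminf$. The only cosmetic difference is that the paper invokes Remark~\ref{rmk:min=inf_curv_exp} (the curvature exponent is attained) to conclude $\MCP(0,N_0)$ directly for $N_0=\liminf N_\mathrm{curv}^\varepsilon$, whereas you fix $N>L$, use the monotonicity $\MCP(0,N_k)\Rightarrow\MCP(0,N)$, and then let $N\downarrow L$; the two arguments are equivalent.
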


\begin{proof}
    If the inferior limit of the curvature exponents is infinite, there is nothing to prove. Hence, assume that $\liminf_{\varepsilon\to 0}N_\mathrm{curv}^\varepsilon<\infty$. This means that, for $\varepsilon>0$ sufficiently small, $(\X,\di_\varepsilon,\m_\varepsilon)$ satisfies the $\MCP(0,N_\mathrm{curv}^\varepsilon)$, cf.\ Remark \ref{rmk:min=inf_curv_exp}. Let now $N_0:=\liminf_{\varepsilon\to 0}N_\mathrm{curv}^\varepsilon$ and consider a sequence $\{\varepsilon_n\}_{n\in\N}$ such that $N^{\varepsilon_n}_{\mathrm{curv}}\to N_0$. Then, by stability of the measure contraction property under the pointed measured Gromov--Hausdorff convergence, $(\X,\di_0,\m_0)$ satisfies $\MCP(0,N_0)$. This concludes the proof. 
\end{proof}

\subsection{Convex trigonometry}
\label{sec:convex_trigtrig}
We recall some basic facts about convex trigonometry, see \cite[Sec.\  2.2-2.3]{borza2024measure} for a more comprehensive introduction. Let $\normdot$ be a norm on $\R^2$,
$\Omega\subset \R^2$ the unit ball centered at the identity,
and $\pi_\Omega$ the surface area of $\Omega$.
In this section, we recall the definition of the convex trigonometric functions associated with the norm $\normdot$, firstly introduced in \cite{lok} (see also \cite{lok2}).
Let $f_\Omega:\R^2\to \R$ be a convex function defined by $f_{\Omega}(x):=\frac12\|x\|^2$.
We say that a norm is of class $C^{k,\alpha}$ if the function $f_\Omega$ is of class $C^{k,\alpha}$ on $\R^2\setminus \{(0,0)\}$,
and we say that a norm is strictly convex (resp. strongly convex) if $f_\Omega$ is strictly convex (resp. strongly convex).
Throughout this paper,
for simplicity we assume that a reference norm $\normdot$ is $C^1$ and strictly convex.
Geometrically it is equivalent to the property that $\partial\Omega$ is a $C^1$ curve and $\Omega$ is a strictly convex domain.
\begin{definition}[Convex trigonometric functions]
    For $\theta\in[0,2\pi_{\Omega})$, define $P_\theta$ as the point on the boundary of $\Omega$, such that the area of the sector of $\Omega$ between the $x$-axis $Ox$ and the ray $OP_{\theta}$ is $\frac{1}{2}\theta$ (see Figure \ref{fig:convextrig1}). Moreover, define $\sinom(\theta)$ and $\cosom(\theta)$ as the coordinates of the point $P_\theta$, i.e.
    \begin{equation*}
        P_\theta = \big( \cosom(\theta), \sinom(\theta) \big).
    \end{equation*}
    Finally, extend these trigonometric functions outside the interval $[0,2\pi_{\Omega})$ by periodicity (of period $2 \pi_{\Omega}$), so that for every $k\in \mathbb Z$
    \begin{equation*}
        \cosom(\theta)= \cosom(\theta+2k \pi_{\Omega}), \quad \sinom(\theta)= \sinom(\theta+2k \pi_{\Omega}) \quad \text{and}\quad P_\theta = P_{\theta +2k\pi_{\Omega}}.
    \end{equation*}
\end{definition}
\
\noindent In particular, the maps $P,\sinom,\cosom$ are well-defined on the quotient $\R/ 2 \pi_\Omega \mathbb Z$. Observe that by definition $\sinom(0)=0$ and that when $\Omega$ is the Euclidean unit ball we recover the classical trigonometric functions.

\begin{figure}[ht]
    \begin{minipage}[c]{.47\textwidth}

    \centering
    \begin{tikzpicture}[scale=0.8]

    \draw[white](1.1,3.25)--(0.92142,4.75);
    \fill[color=black!10!white](3,0)--(0,0)--(1.1,3.25)--(3.6,4);
    \fill[white](3,0) .. controls (3.6,2.8) and (2.4,3.8)..(0,3)--(0,4)--(4,4);
    \draw[->] (-4,0)--(4,0);
    \draw[->] (0,-4)--(0,4);
    \draw[very thick] (3,0) .. controls (3.6,2.8) and (2.4,3.8)..(0,3)..controls (-1.2,2.6) and (-2,2.4).. (-2.6,0)..controls (-3.4,-3) and (-2,-3.4).. (0,-3.2)..controls (1.4,-3) and (2.4,-2.4).. (3,0);
    \draw[dotted,blue,thick] (1.1,3.25) --(0,3.25);
    \draw[very thick] (1.1,3.25) --(0,0);
    \draw[very thick] (3,0) --(0,0);
    \draw[very thick,blue](0,3.25)--(0,0);
    \draw[very thick,red](1.1,0)--(0,0);
    \draw[dotted,red,thick] (1.1,3.25) --(1.1,0);
    \filldraw[black] (0,0) circle (1.5pt);
    \filldraw[blue] (0,3.25) circle (1.5pt);
    \filldraw[red] (1.1,0) circle (1.5pt);
    \filldraw[black] (1.1,3.25) circle (1.5pt);

    \node at (-0.9,2)[label=south:${\color{blue}{\sin_\Omega(\theta)}}$] {};
    \node at (0.9,0)[label=south:${\color{red}{\cos_\Omega(\theta)}}$] {};
    \node at (-0.3,0.1)[label=south:$O$] {};
    \node at (2,2)[label=south:$\frac 12 \theta$] {};
    \node at (-2,-1.8)[label=south:$\Omega$] {};
    \node at (1.2,4.2)[label=south:$P_\theta$] {};

    \end{tikzpicture}
    \caption{Values of the generalized trigonometric functions $\cosom$ and $\sinom$.}
    \label{fig:convextrig1}
    
\end{minipage}%
\hfill
\begin{minipage}[h]{.47\textwidth}

    \centering
    \begin{tikzpicture}[scale=0.8]

    \fill[color=black!10!white](3,0)--(0,0)--(1.1,3.25)--(3.6,4);
    \fill[white](3,0) .. controls (3.6,2.8) and (2.4,3.8)..(0,3)--(0,4)--(4,4);
    \draw[->] (-4,0)--(4,0);
    \draw[->] (0,-4)--(0,4);
    \draw[very thick] (3,0) .. controls (3.6,2.8) and (2.4,3.8)..(0,3)..controls (-1.2,2.6) and (-2,2.4).. (-2.6,0)..controls (-3.4,-3) and (-2,-3.4).. (0,-3.2)..controls (1.4,-3) and (2.4,-2.4).. (3,0);
    \draw[very thick] (1.1,3.25) --(0,0);
    \draw[very thick] (3,0) --(0,0);
    \filldraw[black] (0,0) circle (1.5pt);
    \filldraw[black] (1.1,3.25) circle (1.5pt);
    \draw(1.1,3.25)--(-1,3);
    \draw(1.1,3.25)--(3.2,3.5);
    \draw[very thick, ->](1.1,3.25)--(0.92142,4.75);

    \node at (-0.3,0.1)[label=south:$O$] {};
    \node at (2,2)[label=south:$\frac 12 \theta$] {};
    \node at (-2,-1.8)[label=south:$\Omega$] {};
    \node at (1.5,3.3)[label=south:$P_\theta$] {};
    \node at (1.5,4.7)[label=south:$Q_{\phi}$] {};

    \end{tikzpicture}
   \caption{Representation of the correspondence $\theta\xleftrightarrow{\Omega} \phi$.}
    \label{fig:convextrig2}
    
\end{minipage}
\end{figure}

Consider now  the polar set:
\begin{equation*}
    \Omega^\circ := \{p\in \R^2\, :\, \langle p,x\rangle\leq 1 \text{ for every }x\in \Omega\},
\end{equation*}
which is the unit ball of the dual norm $\normdot_\ast$ on $\R^2$, and consider the associated trigonometric functions $\sinomp$ and $\cosomp$. Observe that, by definition of polar set, it holds that 
\begin{equation}
\label{eq:NO_PYTHAGOREAN_IDENTITY_COMETIPERMETTI}
    \cosom(\theta) \cosomp(\phi) + \sinom(\theta)\sinomp(\phi)\leq 1,\qquad \text{for every } \theta,\phi\in \R.
\end{equation}

\begin{definition}[Correspondence]
\label{def:correspondence}
    We say that two angles $\theta,\phi\in \R$ \emph{correspond} to each other and write $\theta \xleftrightarrow{\Omega} \phi$ if the vector $Q_\phi:= (\cosomp(\phi),\sinomp(\phi))$ determines a half-plane containing $\Omega$ (see Figure \ref{fig:convextrig2}).
\end{definition}

By the bipolar theorem \cite[Thm.\ 14.5]{Rockafellar+1970}, it holds that $\Omega^{\circ \circ}=\Omega$.
This fact implies that two angles $\theta,\phi\in \R$ satisfy $\theta\xleftrightarrow{\Omega} \phi$ if and only if $\phi\xleftrightarrow{\Omega^\circ} \theta$.
Moreover,
$\theta\xleftrightarrow{\Omega} \phi$ if and only if the following analogous of the Pythagorean equality holds:
    \begin{equation}\label{eq:pytagorean}
         \cosom(\theta) \cosomp(\phi) + \sinom(\theta)\sinomp(\phi)= 1.
    \end{equation}

\noindent Thanks to the $C^1$ and strictly convexity assumption on $\normdot$,
the correspondence $\theta\xleftrightarrow{\Omega} \phi$ is one-to-one.
Therefore we can define a continuous monotone map $C^\circ$ that maps an angle $\theta$ to the unique angle corresponding to $\theta$ i.e. $\theta\xleftrightarrow{\Omega} C^\circ(\theta)$.
Since the convex set $\Omega$ is symmetric, this function has the following periodicity property:
\begin{equation}
\label{eq:periodicity_Ccirc}
    C^\circ(\theta+\pi_{\Omega} k)=  C^\circ(\theta) +\pi_{\Omega^\circ} k \qquad \text{ for every }k\in \mathbb Z,
\end{equation}
where $\pi_{\Omega^\circ}$ denotes the surface area of $\Omega^\circ$. Analogously, we can define the map $C_\circ$ associated to the correspondence $\phi\xleftrightarrow{\Omega^\circ} \theta$, and it satisfies an analogue of \eqref{eq:periodicity_Ccirc}. Note that $C^\circ\circ C_\circ=C_\circ\circ C^\circ=\mathrm{Id}$ and that the relationship between $C_\circ$ and the reference norm is given, according to \cite[Lem.\ 2.15]{borza2024measure}, by
\begin{equation}
\label{eq:Ccirctonorm}
C_\circ = P^{-1} \circ \diff \normdot_* \circ Q.
\end{equation}

\begin{prop}\label{prop:difftrig}
    Let $\normdot$ be a $C^1$ and strictly convex norm. The associated trigonometric functions $\sinomp$ and $\cosomp$ are differentiable and it holds that
    \begin{equation*}
        \sinomp'(\phi)= \cosom(C_\circ(\phi)) \qquad \text{and}\qquad \cosomp'(\phi)= - \sinom(C_\circ(\phi)).
    \end{equation*}
    Naturally, the analogous result holds for the trigonometric functions $\sinom$ and $\cosom$.
\end{prop}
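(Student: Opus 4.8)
The plan is to establish the differentiability of $\sinomp$ and $\cosomp$ first, and then to pin down their derivatives from two scalar identities: an \emph{orthogonality} relation, coming from the Pythagorean characterization of the correspondence $\phi\xleftrightarrow{\Omega^\circ}\theta$, and a \emph{normalization} relation, coming from the area-based definition of the convex trigonometric functions. The formula will then be essentially forced.

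For differentiability, I would first note that polarity exchanges corners and segments of the boundary, so by the bipolar theorem $\Omega^{\circ\circ}=\Omega$ the dual ball $\Omega^\circ$ is again \emph{both} $C^1$ and strictly convex: strict convexity of $\Omega$ (no segments in $\partial\Omega$) forces smoothness of $\partial\Omega^\circ$ (no corners), while $C^1$ regularity of $\Omega$ (no corners of $\partial\Omega$) forces strict convexity of $\Omega^\circ$ (no segments). Parametrizing $\partial\Omega^\circ$ by a $C^1$ regular parametrization $\gamma$, the swept-area function $s\mapsto\tfrac12\int_0^s(\gamma_1\gamma_2'-\gamma_2\gamma_1')$ is $C^1$ with strictly positive derivative — the integrand is positive because the origin lies in the interior of $\Omega^\circ$, so no tangent line of $\partial\Omega^\circ$ passes through it — hence it is a $C^1$ diffeomorphism and its inverse is $C^1$. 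Since $\phi\mapsto Q_\phi=(\cosomp(\phi),\sinomp(\phi))$ is precisely this area reparametrization of $\gamma$, the functions $\cosomp$ and $\sinomp$ are of class $C^1$. (Alternatively one may invoke the analysis in \cite{lok} or \cite[Sec.\ 2.2--2.3]{borza2024measure}.)

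Now fix $\phi$ and set $\theta:=C_\circ(\phi)$, so that $\theta\xleftrightarrow{\Omega}\phi$. Differentiating in $\phi$ the identity stating that the $\Omega^\circ$-sector between $Ox$ and $OQ_\phi$ has area $\tfrac12\phi$ gives the normalization relation $\cosomp(\phi)\,\sinomp'(\phi)-\sinomp(\phi)\,\cosomp'(\phi)=1$. For the orthogonality relation, observe that by \eqref{eq:NO_PYTHAGOREAN_IDENTITY_COMETIPERMETTI} the (now differentiable) function $\psi\mapsto\cosom(\theta)\cosomp(\psi)+\sinom(\theta)\sinomp(\psi)$ is everywhere $\le 1$ and, by \eqref{eq:pytagorean}, equals $1$ at $\psi=\phi$; hence its derivative vanishes there, i.e. $\cosom(\theta)\,\cosomp'(\phi)+\sinom(\theta)\,\sinomp'(\phi)=0$. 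Thus $(\cosomp'(\phi),\sinomp'(\phi))$ is orthogonal to $P_\theta=(\cosom(\theta),\sinom(\theta))$, so $(\cosomp'(\phi),\sinomp'(\phi))=t\,(-\sinom(\theta),\cosom(\theta))$ for a scalar $t=t(\phi)$. Substituting into the normalization relation and using \eqref{eq:pytagorean} once more yields $t\big(\cosomp(\phi)\cosom(\theta)+\sinomp(\phi)\sinom(\theta)\big)=t=1$, whence $\sinomp'(\phi)=\cosom(C_\circ(\phi))$ and $\cosomp'(\phi)=-\sinom(C_\circ(\phi))$. The analogous identities for $\sinom$ and $\cosom$ follow by running the same argument with $\Omega$ and $\Omega^\circ$ interchanged, which is legitimate since $\Omega^{\circ\circ}=\Omega$ and the correspondence map associated with $\Omega^\circ$ is $C^\circ$.

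The only genuinely delicate point is the differentiability step, which is where strict convexity of $\normdot$ truly enters (through polarity, to guarantee $\partial\Omega^\circ$ is $C^1$); once differentiability is available, the two scalar identities leave no freedom. One should also be careful with orientations so that the normalization constant comes out $+1$ rather than $-1$; this is easy to check against the Euclidean case $\Omega=\Omega^\circ$, $C_\circ=\mathrm{id}$, where the statement reduces to $\sin'=\cos$ and $\cos'=-\sin$.
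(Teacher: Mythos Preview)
Your argument is correct. The paper itself does not supply a proof of this proposition: it is stated as background material from \cite{lok} and \cite[Sec.\ 2.2--2.3]{borza2024measure}, so there is no in-paper proof to compare against directly. That said, your approach is precisely the standard one used in those references: the two scalar relations you isolate --- the area normalization $\cosomp\sinomp'-\sinomp\cosomp'=1$ and the first-order optimality of the Pythagorean inequality at the corresponding angle --- are exactly the ingredients, and they determine the derivative uniquely as you show. Your differentiability step (duality transfers $C^1$/strict convexity between $\Omega$ and $\Omega^\circ$, then the swept-area map is a $C^1$ diffeomorphism because no tangent to $\partial\Omega^\circ$ hits the origin) is also the right mechanism; the paper uses the same swept-area/diffeomorphism idea later in Lemma~\ref{lem:regularity_P} for the map $P$.
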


\subsection{The \sF geometry of the Heisenberg group}
\label{sec:saymyname}

We present here the \sF Heisenberg group and study its geodesics. Let us consider the Lie group $M=\R^3$, equipped with the non-commutative group law, defined by
\begin{equation}
    (x, y, z) \star (x', y', z') = \bigg(x+x',y+y',z+z'+\frac12(xy' - x'y)\bigg),\qquad\forall\,(x, y, z), (x', y', z')\in\R^3,
\end{equation}
with identity element $\e=(0,0,0)$. We define the left-invariant vector fields
\begin{equation}
    X_1:=\partial_x-\frac{y}2\partial_z,\qquad X_2:=\partial_y+\frac{x}2\partial_z.
\end{equation}
The associated distribution of rank $2$ is $\dis:=\text{span}\{X_1,X_2\}$. It can be easily seen that $\dis$ is bracket-generating. Then, letting $\normdot:\R^2\to\R_{\geq 0}$ be a norm, the \emph{\sF Heisenberg group} $\hei$ is the Lie group $M$ equipped with the \sF structure $(\dis,\normdot)$. For further details on \sF geometry, we refer to \cite[Sec.\ 2.2]{magnabosco2023failure}. We define the associated left-invariant norm on $\dis$ as 
\begin{equation}
    \|v\|_\dis:=\|(u_1,u_2)\|,\qquad \text{for every }v=u_1X_1+u_2X_2\in \dis. 
\end{equation}
A curve $\gamma: [0,1]\to \hei$ is \emph{admissible} if its velocity $\dot\gamma(t)$ exists almost everywhere and there exists a function $u=(u_1,u_2)\in L^2([0,1];\R^2)$ such that
\begin{equation}
\label{eq:admissible_curve}
    \dot\gamma(t)= u_1(t)X_1(\gamma(t))+u_2(t)X_2(\gamma(t))\in\dis_{\gamma(t)},\qquad\text{for a.e. }t\in [0,1].
\end{equation}
The function $u$ is called \emph{the control}. We define the \emph{length} of an admissible curve:
\begin{equation}
    \ell(\gamma):=\int_0^1 \norm{\dot\gamma(t)}_{\dis} \de t\in[0,\infty).
\end{equation}
For every couple of points $q_0,q_1\in M$, define the \emph{\sF distance} between them as
\begin{equation*}
    \di (q_0,q_1) := \inf \left\{\ell(\gamma)\, :\, \gamma \text{ admissible, } \gamma(0)=q_0 \text{ and }\gamma(1)=q_1\right\}.
\end{equation*}
We recall that the Chow--Rashevskii Theorem ensures that the \sF distance on $\hei$ is finite, continuous and the induced topology is the manifold one. Note that, since both the norm and the distribution are left-invariant, the left-translations defined by
\begin{equation}
\label{eq:left_translations}
    L_p:\hei\to\hei;\qquad L_p(q):=p\star q,
\end{equation}
are isometries for every $p\in \hei$. 

\begin{remark}
    We assume that the norm is reversible, and this implies that $\di$ is symmetric as well. We believe that the arguments developed here could be adapted to the more general setting of non-reversible \sF structures. However, the measure contraction property is typically defined for classical metric measure spaces and, thus, the distance should be symmetric. To the best of our knowledge, there is no general definition of $\MCP$ for non-symmetric metric measure spaces. For this reason, we restrict ourselves to reversible sub-Finsler norms.
\end{remark}

In the \sF Heisenberg groups, the geodesics were originally studied in \cite{Busemann} and \cite{Bereszynski} for the three-dimensional case and in \cite{lok2} for general left-invariant structures on higher-dimensional Heisenberg groups.
We recall the map $G_t$ which plays the role of a \emph{\sF exponential map} from the origin at time $t$,
thoroughly studied in \cite{borza2024measure}. It can be seen as a generalization of the exponential map of the sub-Riemannian Heisenberg group (see for example \cite{Ambrosio2004}).

\begin{definition}\label{def:exponentialmap}
    Let $\hei$ be the \sF Heisenberg group, equipped with a $C^1$ and strictly convex norm $\normdot$ and let 
    \begin{equation}
        \mathscr{U}:=\R_{>0}\times \R/2\pi_{\Omega^\circ}\mathbb{Z}\times \{(-2\pi_{\Omega^\circ},2\pi_{\Omega^\circ})\setminus\{0\}\}.
    \end{equation} 
    For every $t\in \R$, we define the exponential map at time $t$ as $G_t:\mathscr{U}\to\hei$, such that for any $(r,\phi,\omega)\in \mathscr{U}$, $G_t(r,\phi,\omega):=(x_t,y_t,z_t)$, where
\begin{equation}\label{eq:xyz}
    \begin{cases}
    \begin{aligned}
        x_t(r,\phi,\omega) &= \frac{r}{\omega}\left(\sinomp(\phi+\omega t) - \sinomp(\phi)\right),\\
        y_t(r,\phi,\omega) &= -\frac{r}{\omega}\left(\cosomp(\phi+\omega t) - \cosomp(\phi)\right),\\
        z_t(r,\phi,\omega) &= \frac{r^2}{2\omega^2}\left(\omega t + \cosomp(\phi+\omega t) \sinomp(\phi) - \sinomp(\phi+\omega t ) \cosomp(\phi)\right).
    \end{aligned}
    \end{cases}
\end{equation}
\end{definition}

According to \cite[Prop. 3.5]{borza2024measure},
the mapping $G_t$ holds several properties that characterize the exponential map at time $t$. 
Moreover, as highlighted in \cite{borza2024measure} (see also Propositions \ref{prop:phoenix} and \ref{prop:diffcharacMCP} below), the validity of $\MCP(K,N)$ is related to the properties of the Jacobian of the map $G_t$.
After a straightforward computation,
we can write the Jacobian by using the trigonometric functions.

\begin{prop}
\label{prop:NONO}
    Let $\hei$ be the \sF Heisenberg group, equipped with a $C^1$ and strictly convex norm $\normdot$. The map $G_t:\mathscr{U}\to\hei$ is differentiable with Jacobian 
    \begin{equation}\label{eq:jacobian}
    \begin{split}
         \J_t (r,\phi, \omega) = \frac{r^3 t}{ \omega^4} \bigg[2 &- \Big(\sinomp(\phi + \omega t) \sinom(C_\circ(\phi))+ \cosomp(\phi + \omega t) \cosom(C_\circ(\phi))\Big)  \\
         &- \Big(\sinom\big(C_\circ(\phi + \omega)\big) \sinomp( \phi)+ \cosom\big( C_\circ(\phi + \omega t) \big) \cosomp( \phi)\Big)\\
         &- \omega t \Big(\sinom\big(C_\circ(\phi + \omega t)\big) \cosom(C_\circ(\phi)) - \cosom\big(C_\circ(\phi + \omega)\big) \sinom(C_\circ(\phi)) \Big) \bigg]. 
    \end{split}
    \end{equation}
\end{prop}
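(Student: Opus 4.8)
The plan is to compute the Jacobian of $G_t$ directly from the explicit formula \eqref{eq:xyz}, using the differentiation rules for the convex trigonometric functions recorded in Proposition \ref{prop:difftrig}. Writing $G_t(r,\phi,\omega) = (x_t, y_t, z_t)$, I would form the $3\times 3$ matrix of partial derivatives with respect to $(r,\phi,\omega)$ and expand the determinant. The key observation that keeps this manageable is homogeneity in $r$: each of $x_t, y_t$ is linear in $r$ and $z_t$ is quadratic in $r$, so the determinant is automatically proportional to $r\cdot r \cdot r = r^3$ (more precisely, $\partial_r$ of each coordinate removes exactly one power of $r$, and the columns for $\partial_\phi$, $\partial_\omega$ keep the original powers, giving total degree $r^{1+1+1}=r^3$ along the appropriate cofactor expansion, or one simply factors $r$ out of the first two rows and $r^2$ out of the third after using linearity). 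So I would factor out $r^3$ at the outset and work with the normalized map at $r=1$.

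The main computational step is the differentiation of the trigonometric compositions. Using Proposition \ref{prop:difftrig}, $\partial_\phi \sinomp(\phi) = \cosom(C_\circ(\phi))$ and $\partial_\phi \cosomp(\phi) = -\sinom(C_\circ(\phi))$; the same applies to arguments $\phi + \omega t$, where the chain rule with respect to $\phi$ gives a factor $1$ and with respect to $\omega$ gives a factor $t$. So, for instance, $\partial_\omega x_t = -\frac{r}{\omega^2}(\sinomp(\phi+\omega t) - \sinomp(\phi)) + \frac{r t}{\omega}\cosom(C_\circ(\phi+\omega t))$, and similarly for the other eight entries. I would then substitute all entries into the $3\times 3$ determinant. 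At this point the expression is a rational function in $\omega$ with denominator a power of $\omega$; the claimed prefactor $\tfrac{r^3 t}{\omega^4}$ suggests that after collecting terms one finds a common denominator $\omega^4$ (consistent with $z_t$ carrying $\omega^{-2}$ and $x_t, y_t$ carrying $\omega^{-1}$, so the determinant naively has $\omega^{-4}$), and the numerator should simplify, via the Pythagorean-type identity \eqref{eq:pytagorean} and the addition structure of $\sinomp, \cosomp$, into the bracketed expression in \eqref{eq:jacobian}. In particular, the ``$2$'' in the bracket should emerge from combining a $\sinomp^2 + \cosomp^2$-type cancellation at the two endpoints $\phi$ and $\phi + \omega t$ with the Pythagorean identity $\cosom(C_\circ(\psi))\cosomp(\psi) + \sinom(C_\circ(\psi))\sinomp(\psi) = 1$ applied at $\psi = \phi$ and $\psi = \phi + \omega t$.

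The step I expect to be the genuine obstacle is the bookkeeping in the determinant expansion and the trigonometric simplification of its numerator — there is no conceptual difficulty, but one must carefully track the arguments ($\phi$ versus $\phi + \omega t$) and the factors of $t$ coming from the chain rule, and then recognize the right groupings to apply \eqref{eq:pytagorean}. A clean way to organize this is to expand the determinant along the first row (the $\partial_r$ row, whose entries are simple: $\frac{1}{\omega}(\sinomp(\phi+\omega t)-\sinomp(\phi))$, $-\frac{1}{\omega}(\cosomp(\phi+\omega t)-\cosomp(\phi))$, and $\frac{1}{\omega^2}(\cdots)$), and handle the three resulting $2\times 2$ minors separately; each minor, after using Proposition \ref{prop:difftrig}, reduces to a combination of products of trigonometric values at the two endpoints, and summing the three contributions produces the three bracketed lines of \eqref{eq:jacobian}. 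I would also double-check the formula against the known sub-Riemannian case (where $\Omega$ is the Euclidean disc, $C_\circ = \mathrm{Id}$, and $\sinomp = \sin$, $\cosomp = \cos$), in which \eqref{eq:jacobian} must reduce to the classical Heisenberg Jacobian $\frac{r^3 t}{\omega^4}\bigl(2 - 2\cos(\omega t) - \omega t \sin(\omega t)\bigr)$, as a sanity check on signs and factors.
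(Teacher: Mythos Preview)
Your proposal is correct and matches the paper's approach: the paper does not give a detailed proof but simply states that the formula follows ``after a straightforward computation'', and your outline --- direct differentiation of \eqref{eq:xyz} via Proposition~\ref{prop:difftrig}, factoring out $r^3/\omega^4$ by homogeneity, and simplifying the numerator with the Pythagorean identity \eqref{eq:pytagorean} at $\psi=\phi$ and $\psi=\phi+\omega t$ --- is exactly that computation. One small wording slip: what you call ``the $\partial_r$ row'' is the $\partial_r$ \emph{column} in the usual convention $\partial(x,y,z)/\partial(r,\phi,\omega)$, but this does not affect the argument.
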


On the domain $\mathcal{U}:=\R/2\pi_{\Omega^\circ}\mathbb{Z}\times(-2\pi_{\Omega^\circ},2\pi_{\Omega^\circ})\setminus\{0\}$,
we define the \emph{reduced Jacobian} as the measurable function $\J_R:\mathcal{U}\to\R$, such that for every $(\phi,\psi)\in \mathcal U$, 
\begin{equation}\label{eq:reduced_Jac}
    \begin{split}
         \J_R (\phi, \psi) := 2 &- \Big(\sinomp( \phi +  \psi) \sinom( C_\circ(\phi))+ \cosomp( \phi +  \psi) \cosom( C_\circ(\phi))\Big)  \\
         &- \Big(\sinom\big(C_\circ( \phi +  \psi)\big) \sinomp( \phi)+ \cosom\big( C_\circ( \phi +  \psi) \big) \cosomp( \phi)\Big)\\
         &- \psi \Big(\sinom\big(C_\circ( \phi +  \psi)\big) \cosom( C_\circ(\phi)) - \cosom\big(C_\circ( \phi +  \psi)\big) \sinom( C_\circ(\phi)) \Big). 
    \end{split}
    \end{equation}
In other words,
the reduced Jacobian is defined so that
    $\J_t (r,\phi, \omega) = \frac{r^3 t}{\omega^4} \J_R (\phi, \omega t )$.
The (reduced) Jacobian can be used to study the measure contraction property (see \cite[Sec.\ 3.3]{borza2024measure}).

Recall that according to \cite[Prop.\ 2.16]{borza2024measure}, the correspondence map $C_\circ$ is strictly increasing and Lipschitz if the reference norm $\normdot$ is $C^1$ and strongly convex. The following useful expression for the reduced Jacobian was obtained in \cite{borza2024measure}.

\begin{prop}[{\cite[Prop.\ 5.6. and Eq.\ (63)]{borza2024measure}}]
    \label{prop:formulaJRdJRint}
    Let $\hei$ be the sub-Finsler Heisenberg group, equipped with a $C^1$ and strongly convex norm $\normdot$. Then, its reduced Jacobian $\J_R$ can be expressed in the following way. For all $(\phi,\omega)\in\mathcal{U}$, it holds
\begin{equation}
\label{eq:JRaround0}
\J_R (\phi, \omega) = \underbrace{\frac{1}{2} \int_{\phi}^{\phi + \omega} \left( \int_{\phi}^{\phi + \omega} (t - s)^2 C_\circ'(t) C_\circ'(s) \diff s \right) \diff t}_{=: P(\phi, \omega)} + R(\phi, \omega),
\end{equation}
where
\begin{equation}
    \label{eq:remainderterm}
    \begin{aligned}
    R(\phi, \omega) =&\int_{\phi}^{\phi + \omega} \int_\phi^t \int_\phi^s (t - s)(s - u) \Big[ \sin_{\Omega^\circ}(u) \cos_{\Omega^\circ}(\phi) - \sin_{\Omega^\circ}(\phi) \cos_{\Omega^\circ}(u) \\
    & - (t - \phi) (\cos_{\Omega}(C_\circ(\phi))\cos_{\Omega^\circ}(u) + \sin_{\Omega}(C_\circ(\phi))\sin_{\Omega^\circ}(u)) \Big] C'_\circ(t) C'_\circ(s) C'_\circ(u) \diff u \diff s \diff t.
\end{aligned}
\end{equation}
Furthermore, there exists a constant $M > 0$ such that
    \begin{equation}
    \label{eq:boundremainder}
        |R(\phi,\omega)| \leq M \omega^6, \qquad\text{for every } (\phi,\omega)\in \mathcal{U}.
    \end{equation}
\end{prop}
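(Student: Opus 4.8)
The plan is to reduce the closed form \eqref{eq:reduced_Jac} of the reduced Jacobian to the claimed expression by repeated use of the fundamental theorem of calculus, with only two inputs: the Pythagorean-type identity \eqref{eq:pytagorean} at the corresponding pair $\phi\xleftrightarrow{\Omega^\circ}C_\circ(\phi)$, and the differentiation rules of Proposition \ref{prop:difftrig} together with their analogues for $\sin_\Omega,\cos_\Omega$, which (using $C^\circ\circ C_\circ=\mathrm{Id}$) read $\frac{d}{dr}\sin_\Omega(C_\circ(r))=\cos_{\Omega^\circ}(r)\,C_\circ'(r)$ and $\frac{d}{dr}\cos_\Omega(C_\circ(r))=-\sin_{\Omega^\circ}(r)\,C_\circ'(r)$. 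First I would record that, since $\normdot$ is $C^1$ and strongly convex, $C_\circ$ is Lipschitz by \cite[Prop.\ 2.16]{borza2024measure}, so $C_\circ'\in L^\infty$ is defined a.e.\ and the fundamental theorem of calculus applies to it; since $C_\circ$ need not be $C^2$, every expansion will be carried out with Newton--Leibniz (integral) remainders rather than Taylor polynomials.

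Next I would fix $\phi$, abbreviate $a:=C_\circ$, and introduce
\[
    D(r):=\sin_\Omega(a(r))\cos_\Omega(a(\phi))-\cos_\Omega(a(r))\sin_\Omega(a(\phi)),\qquad
    F(r):=\sin_{\Omega^\circ}(r)\cos_{\Omega^\circ}(\phi)-\cos_{\Omega^\circ}(r)\sin_{\Omega^\circ}(\phi),
\]
\[
    E(r):=\cos_{\Omega^\circ}(r)\cos_\Omega(a(\phi))+\sin_{\Omega^\circ}(r)\sin_\Omega(a(\phi)),\qquad
    \widetilde E(r):=\cos_\Omega(a(r))\cos_{\Omega^\circ}(\phi)+\sin_\Omega(a(r))\sin_{\Omega^\circ}(\phi).
\]
Identity \eqref{eq:pytagorean} at $\phi\xleftrightarrow{\Omega^\circ}a(\phi)$ gives $D(\phi)=F(\phi)=0$ and $E(\phi)=\widetilde E(\phi)=1$, and the differentiation rules give the closed first-order system $F'=\widetilde E$, $\widetilde E'=-a'F$, $E'=-D$, $D'=a'E$. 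Writing $2=1+1$ in \eqref{eq:reduced_Jac} and using \eqref{eq:pytagorean} to recast both constants $1$ through the pair $\phi\xleftrightarrow{\Omega^\circ}a(\phi)$, the first two summands of $\J_R$ become convex-trigonometric differences vanishing at $\omega=0$ (the third already carries a factor $\omega$); the fundamental theorem of calculus plus one Fubini should then give
\[
    \J_R(\phi,\omega)=\int_\phi^{\phi+\omega}\bigl(F(t)-(t-\phi)E(t)\bigr)\,C_\circ'(t)\,\diff t .
\]

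The core step would be to prove the pointwise identity
\[
    F(t)-(t-\phi)E(t)=\int_\phi^{t}(t-s)^2\,C_\circ'(s)\,\diff s
    +\int_\phi^{t}\!\!\int_\phi^{s}(t-s)(s-u)\bigl(F(u)-(t-\phi)E(u)\bigr)C_\circ'(s)C_\circ'(u)\,\diff u\,\diff s .
\]
Calling the left-hand side $G(t)$, the system yields $G(\phi)=G'(\phi)=0$ and $G''=2D-a'G$ a.e., hence $G(t)=\int_\phi^t(t-s)\bigl(2D(s)-a'(s)G(s)\bigr)\,\diff s$; substituting the representations $D(\sigma)=\int_\phi^\sigma a'(v)E(v)\,\diff v$ and $E(v)=1-\int_\phi^v D(w)\,\diff w$ peels off the term $\int_\phi^t(t-s)^2C_\circ'(s)\,\diff s$, and a few integrations by parts (using $a'F=-\widetilde E'$, $a'E=D'$, the vanishing $F(\phi)=D(\phi)=0$, the value $\widetilde E(\phi)=1$, and the elementary identity $2(t-s)-(t-\phi)=t+\phi-2s$) rearrange what remains into the claimed double integral. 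Inserting this into the formula for $\J_R$ and symmetrising the resulting double integral in $(s,t)$ yields exactly $\J_R=P+R$, once one observes that the bracket in \eqref{eq:remainderterm} is precisely $F(u)-(t-\phi)E(u)$.

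Finally, the bound \eqref{eq:boundremainder} should be straightforward: on the simplex $\{\phi\le u\le s\le t\le\phi+\omega\}$, of volume $\omega^3/6$, one has $0\le(t-s)(s-u)\le\omega^2/4$ and $|C_\circ'|\le\Lip(C_\circ)=:L$, while $|F(u)-(t-\phi)E(u)|\le|F(u)-F(\phi)|+\omega\,\|E\|_\infty\le C\,\omega$ with $C$ independent of $\phi$, because $F(\phi)=0$, $F$ is $C^1$ with globally bounded derivative, $E$ is globally bounded, and $|u-\phi|,|t-\phi|\le\omega$; multiplying these bounds gives $|R(\phi,\omega)|\le\tfrac{C L^3}{24}\,\omega^6$. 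The main obstacle I anticipate is the pointwise identity above: the combinatorics of the repeated integrations by parts must be tracked carefully, and one must resist Taylor-expanding $C_\circ$, which is only Lipschitz.
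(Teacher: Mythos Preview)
Your plan is correct. The first reduction $\J_R(\phi,\omega)=\int_\phi^{\phi+\omega}C_\circ'(t)\bigl(F(t)-(t-\phi)E(t)\bigr)\diff t$ follows exactly as you describe, and the ``pointwise identity'' you isolate is true: from $G''=2D-a'G$ one gets $G(t)=\int_\phi^t(t-s)\bigl(2D(s)-a'(s)G(s)\bigr)\diff s$; expanding $D(s)=\int_\phi^s a'(v)E(v)\diff v$, $E(v)=1-\int_\phi^v(v-u)a'(u)E(u)\diff u$, $F(s)=(s-\phi)-\int_\phi^s(s-u)a'(u)F(u)\diff u$ and $E(s)=1-\int_\phi^s(s-u)a'(u)E(u)\diff u$, the two copies of $\int_\phi^t(t-s)(s-\phi)a'(s)\diff s$ cancel and the $E$--coefficients combine via $(t-s)^2+(t-s)(s-\phi)=(t-s)(t-\phi)$, giving precisely your double integral. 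Symmetrising the leading term in $(s,t)$ then yields $P$, and the triple integral is exactly $R$ since the bracket in \eqref{eq:remainderterm} equals $F(u)-(t-\phi)E(u)$. Your $\omega^6$ bound is also correct.

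The paper does not give its own derivation of the decomposition $\J_R=P+R$; it simply quotes it from \cite{borza2024measure}. The only argument the paper supplies is the short remark that the bound \eqref{eq:boundremainder} follows from the Lipschitzness of $C_\circ$ and the boundedness/Lipschitzness of the generalized trigonometric functions, which is exactly your final paragraph. So your proposal is more detailed than what appears here, and the part that can be compared (the estimate of $R$) coincides with the paper's remark.
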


\begin{remark}
    The bound \eqref{eq:boundremainder} is justified by considering \eqref{eq:remainderterm}, the Lipschitzness of $C_\circ$ (which gives a uniform bound on $C_\circ'$) and the fact that the generalized trigonometric functions are bounded and Lipschitz. Note that this estimate of the error term $R(\phi, \omega)$ improves on \cite[Eq.\ (63)]{borza2024measure} and we will need this better one (at \eqref{eq:finalkick}, to be precise).
\end{remark}

Finally, \cite{borza2024measure} provides several sufficient and necessary conditions relating the reduced Jacobian and the $\mathsf{MCP}(0, N)$ condition.

\begin{prop}[{\cite[Prop.\ 3.20]{borza2024measure}}]\label{prop:phoenix}
    Let $\hei$ be the sub-Finsler Heisenberg group, equipped with a $C^1$ and strictly convex norm $\normdot$, and with the Lebesgue measure $\Leb^3$. Then, the metric measure space $(\hei, \di, \Leb^3)$ satisfies $\mathsf{MCP}(0, N)$ if and only if 
    \begin{equation}
    \label{eq:equivalent_reducedJ2}
        |\J_R (\phi, \omega t)|\geq t^{N-1} |\J_R (\phi, \omega)|,
    \end{equation}
    for every $(\phi,\omega)\in \mathcal{U}$ and every $t\in [0,1]$.
\end{prop}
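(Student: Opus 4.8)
The plan is to translate $\MCP(0,N)$ into a Jacobian inequality for the sub-Finsler exponential map $G_t$ and then rephrase that inequality in terms of the reduced Jacobian $\J_R$. First I would set up the reduction: since $K=0$, formula \eqref{eq:tau} gives $\tau_{0,N}^{(t)}(\theta)\equiv t$, so $\tau_{0,N}^{(t)}(\theta)^N=t^N$. The Lebesgue measure is the bi-invariant Haar measure of $\hei$, hence $\text{spt}(\Leb^3)=\hei$, and by the isometric invariance of the left translations \eqref{eq:left_translations} it is enough to verify the condition with base point $x=\e$. Moreover, minimizing geodesics issued from $\e$ are unique outside the $\Leb^3$-negligible cut locus (the $z$-axis), so Remark \ref{rmk:SIUUUUUUU} applies after discarding that null set, and via \eqref{eq:tj_pantaloncini} the validity of $\MCP(0,N)$ becomes equivalent to the volume-contraction inequality $\Leb^3(A_{t,\e})\ge t^N\,\Leb^3(A)$ for every Borel $A\subseteq\hei$ and every $t\in[0,1]$.

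Next I would change variables through the exponential map. By \eqref{eq:xyz} the map $(r,\phi,\omega)\mapsto G_t(r,\phi,\omega)$ involves only the $C^1$ functions $\sinomp,\cosomp$, so each $G_t$ is $C^1$ on $\mathscr U$ with Jacobian $\J_t$ as in \eqref{eq:jacobian}. Using the description of minimizing geodesics and of the cut locus in $\hei$ recalled in \cite[Prop.\ 3.5]{borza2024measure}, I would argue that $G_1$ is injective on $\mathscr U$ with $\Leb^3$-negligible complement of its image, and that each $G_t$ with $t\in(0,1]$ is injective as well (an intermediate point of a minimizing geodesic cannot lie on the $z$-axis because $|\omega t|<2\pi_{\Omega^\circ}$, hence it lies off the cut locus, where minimizing geodesics are unique, forcing two geodesics through it to coincide). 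Consequently, writing $E:=G_1^{-1}(A)\subseteq\mathscr U$, one has $A=G_1(E)$ and $A_{t,\e}=G_t(E)$ up to $\Leb^3$-null sets, and the area formula gives $\Leb^3(A)=\int_E|\J_1|$ and $\Leb^3(A_{t,\e})=\int_E|\J_t|$. Thus the inequality above becomes: $\int_E|\J_t|\ge t^N\int_E|\J_1|$ for every measurable $E\subseteq\mathscr U$ and every $t\in[0,1]$.

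From here the argument is soft. A family of integral inequalities ranging over all measurable $E$ is equivalent to the corresponding pointwise inequality holding a.e.\ (``$\Leftarrow$'' is monotonicity of the integral; ``$\Rightarrow$'' follows from the Lebesgue differentiation theorem applied to shrinking boxes), so for each fixed $t$ the condition is $|\J_t(r,\phi,\omega)|\ge t^N|\J_1(r,\phi,\omega)|$ for a.e.\ $(r,\phi,\omega)\in\mathscr U$. Substituting $\J_t(r,\phi,\omega)=\frac{r^3t}{\omega^4}\J_R(\phi,\omega t)$ and cancelling the positive common factor $r^3/\omega^4$, this becomes $t\,|\J_R(\phi,\omega t)|\ge t^N|\J_R(\phi,\omega)|$, i.e.\ \eqref{eq:equivalent_reducedJ2}, for a.e.\ $(\phi,\omega)\in\mathcal U$. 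To promote ``a.e.'' to ``for every'', I would use that $\J_R$ in \eqref{eq:reduced_Jac} is continuous on $\mathcal U$ — since $\sinomp,\cosomp$ are differentiable by Proposition \ref{prop:difftrig} and $C_\circ$ is continuous — so, running the previous step along a countable dense set of times, \eqref{eq:equivalent_reducedJ2} extends by continuity in $(\phi,\omega)$ to all of $\mathcal U$, then by continuity of $t\mapsto\J_R(\phi,\omega t)$ to all $t\in(0,1]$; the case $t=0$ is trivial because $\J_R(\phi,0)=0$ by the Pythagorean identity \eqref{eq:pytagorean}.

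The step I expect to be the main obstacle is the change of variables in the second paragraph: one must check, from the precise structure of minimizing geodesics and of the cut locus in $\hei$, that $A_{t,\e}$ coincides $\Leb^3$-almost everywhere with the image $G_t(E)$ of an injective $C^1$ map, so that the area formula returns $\J_t$ with no multiplicity correction and both implications come out clean. The remaining passages — invariance, Lebesgue differentiation, and the continuity upgrade — are routine by comparison.
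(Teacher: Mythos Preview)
The paper does not prove this proposition: it is quoted verbatim from \cite[Prop.\ 3.20]{borza2024measure} and stated without argument, so there is no ``paper's own proof'' to compare against. Your outline is, however, the standard route used in that reference and is essentially correct: reduce to $x=\e$ by left-invariance, use \eqref{eq:tj_pantaloncini} with $\tau_{0,N}^{(t)}=t$, parametrize intermediate sets via $G_t$, pass to the pointwise Jacobian inequality, and simplify to \eqref{eq:equivalent_reducedJ2} using $\J_t=\frac{r^3 t}{\omega^4}\J_R(\phi,\omega t)$.

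Two points deserve care. First, the step you flag is indeed the crux: in \cite{borza2024measure} the structure of the cut locus (the $z$-axis together with the identity) and the bijectivity of $G_1$ onto its complement are established separately, and the injectivity of $G_t$ for $t\in(0,1)$ is deduced exactly as you suggest (no intermediate point can reach the cut locus since $|\omega t|<2\pi_{\Omega^\circ}$). You should invoke those facts rather than re-derive them. Second, for the upgrade from ``a.e.\ $(\phi,\omega)$'' to ``every $(\phi,\omega)$'', note that although $\sinomp,\cosomp$ are only differentiable, the expression \eqref{eq:reduced_Jac} for $\J_R$ involves only $\sinomp,\cosomp,\sinom,\cosom$ and $C_\circ$, all of which are continuous under the $C^1$ and strictly convex assumption; your continuity argument goes through, but be explicit that you are not using derivatives of these functions at this stage.
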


 Moreover, when a reference norm $\normdot$ is $C^2$ and strongly convex,
 there is another characterization by using the $\log$-derivative of the reduced Jacobian.

 \begin{prop}[{\cite[Cor.\ 3.22]{borza2024measure}}]
     \label{prop:diffcharacMCP}
     Let $\hei$ be the sub-Finsler Heisenberg group, equipped with a $C^2$ and strongly convex norm $\normdot$, and with the Lebesgue measure $\Leb^3$. Then, the metric measure space $(\hei, \di, \Leb^3)$ satisfies $\mathsf{MCP}(0, N)$ if and only if,
     for all $(\phi,\omega)\in \mathcal{U}$,
     \begin{equation}
         \label{eq:nec&sufMCPdJacineq}
         N(\phi, \omega) := 1 + \frac{\omega \partial_\omega \mathcal{J}_R(\phi, \omega)}{\mathcal{J}_R(\phi, \omega)} \leq N.
     \end{equation}
    
 \end{prop}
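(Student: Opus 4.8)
The plan is to derive this differential characterization directly from the integral characterization in Proposition \ref{prop:phoenix}, which is already available to us since a $C^2$ strongly convex norm is in particular $C^1$ and strictly convex. The observation that drives everything is that the inequality \eqref{eq:equivalent_reducedJ2}, namely $|\J_R(\phi,\omega t)| \geq t^{N-1}|\J_R(\phi,\omega)|$ for all $t \in [0,1]$, is, after fixing $\phi$ and $\omega$ and setting $g(t) := |\J_R(\phi,\omega t)|/t^{N-1}$, equivalent to the assertion that $g$ is nonincreasing on $(0,1]$. Under the $C^2$ and strong convexity assumptions, the correspondence map $C_\circ$ is $C^1$ (it is the composition $P^{-1}\circ\diff\normdot_*\circ Q$ of $C^1$ maps by \eqref{eq:Ccirctonorm}, using that $\normdot_*$ is $C^2$), so $\J_R$ is continuously differentiable in $\omega$; moreover from the formula \eqref{eq:JRaround0}--\eqref{eq:boundremainder} we have $\J_R(\phi,\omega) = P(\phi,\omega) + O(\omega^6)$ with $P(\phi,\omega) \sim \frac{C_\circ'(\phi)^2}{12}\omega^4$ as $\omega\to 0$, and since strong convexity forces $C_\circ' > 0$ uniformly, $\J_R(\phi,\omega) > 0$ for $\omega$ in a punctured neighbourhood of $0$; a separate argument (or reference to \cite{borza2024measure}) gives $\J_R > 0$ on all of $\mathcal U$, so the absolute values can be dropped.

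With positivity in hand, the monotonicity of $g(t) = t^{1-N}\J_R(\phi,\omega t)$ on $(0,1]$ is equivalent to $g'(t) \leq 0$ there, i.e. to
\begin{equation*}
    (1-N)t^{-N}\J_R(\phi,\omega t) + t^{1-N}\omega\,\partial_\omega\J_R(\phi,\omega t) \leq 0,
\end{equation*}
which, multiplying by $t^{N}/\J_R(\phi,\omega t) > 0$ and writing $\tilde\omega := \omega t$, rearranges to $1 + \tilde\omega\,\partial_\omega\J_R(\phi,\tilde\omega)/\J_R(\phi,\tilde\omega) \leq N$, i.e. $N(\phi,\tilde\omega) \leq N$. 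As $(\phi,\omega)$ ranges over $\mathcal U$ and $t$ over $(0,1]$, the pair $(\phi,\tilde\omega)$ ranges over all of $\mathcal U$ (taking $t=1$ already suffices for surjectivity), so the family of inequalities \eqref{eq:equivalent_reducedJ2} over all $t\in[0,1]$ is equivalent to the single pointwise bound $N(\phi,\omega) \leq N$ for all $(\phi,\omega)\in\mathcal U$. This is precisely \eqref{eq:nec&sufMCPdJacineq}, and combined with Proposition \ref{prop:phoenix} it yields the claim.

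The main technical point — and the step I would be most careful about — is the passage between "the differential inequality $g'(t)\leq 0$ holds on $(0,1]$" and "$g$ is nonincreasing on $(0,1]$", together with the boundary behaviour as $t\to 0^+$. Because $\J_R(\phi,\omega t)\sim c(\phi)(\omega t)^4$ with $c(\phi)>0$, one has $g(t) = t^{1-N}\J_R(\phi,\omega t)\to 0$ as $t\to 0^+$ whenever $N<5$ and $g(t)\to +\infty$ when $N>5$, so the limit at $0$ never obstructs monotonicity on the half-open interval; the only content is on the open interval, where the equivalence of the differential and integral forms of monotonicity is standard for $C^1$ functions. One should also record that the case $t=0$ in \eqref{eq:equivalent_reducedJ2} is vacuous (both sides vanish for $N>1$), so restricting to $t\in(0,1]$ loses nothing. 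I would present these as short remarks rather than belabour them, since the substance of the proposition is the elementary logarithmic-derivative computation above.
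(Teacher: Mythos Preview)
The paper does not supply its own proof of this proposition; it is quoted verbatim from \cite[Cor.~3.22]{borza2024measure}. Your argument is the natural one and is essentially correct, but there is one imprecision worth fixing.

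You assert that, for fixed $(\phi,\omega)$, the inequality $\J_R(\phi,\omega t)\geq t^{N-1}\J_R(\phi,\omega)$ for all $t\in(0,1]$ is \emph{equivalent} to $g(t):=t^{1-N}\J_R(\phi,\omega t)$ being nonincreasing on $(0,1]$. Literally, the inequality only says $g(t)\geq g(1)$, which does not force monotonicity of $g$ on its own. The cleanest repair is to bypass monotonicity altogether in the forward implication: from $g(t)\geq g(1)$ for $t<1$ one gets directly $g'(1)\leq 0$, and $g'(1)\leq 0$ is exactly $N(\phi,\omega)\leq N$. Since $(\phi,\omega)$ was arbitrary in $\mathcal U$, this already yields \eqref{eq:nec&sufMCPdJacineq}. (Alternatively, you can recover genuine monotonicity by exploiting that the family of inequalities is quantified over \emph{all} $(\phi,\omega)$: applying \eqref{eq:equivalent_reducedJ2} at $(\phi,\omega t_1)$ with parameter $t_2/t_1$ yields $g(t_2)\geq g(t_1)$ for $0<t_2\leq t_1\leq 1$.) The converse implication is exactly as you wrote it: $N(\phi,\tilde\omega)\leq N$ for all $\tilde\omega$ gives $g'\leq 0$ everywhere, hence $g(t)\geq g(1)$.

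Your side remarks on regularity and positivity are in order: with a $C^2$ strongly convex norm, $C_\circ$ is $C^1$ and $\J_R$ is $C^1$ in $\omega$; positivity of $\J_R$ on $\mathcal U$ is precisely \cite[Prop.~3.16]{borza2024measure} (also invoked elsewhere in the paper, see Step~4 of the proof of Theorem~\ref{thm:rigidityinsection}), so the absolute values in Proposition~\ref{prop:phoenix} may indeed be dropped. The discussion of the behaviour of $g(t)$ as $t\to 0^+$ is unnecessary for the argument and can be omitted.
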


\section{Rigidity of the curvature exponent}
\label{sec:rigidity}

In this section, we are going to prove Theorem \ref{thm:rigidity}.
We begin by characterizing the angle correspondence $C_\circ$ of a sub-Riemannian Heisenberg group.

\begin{prop}\label{prop:scalar}
    The angle correspondence $C_\circ$ of a sub-Finsler Heisenberg group $(\hei, \di)$ is (a single-valued) affine map if and only if its norm $\normdot$ is induced by a scalar product, i.e. $(\hei, \di)$ is actually sub-Riemannian.
\end{prop}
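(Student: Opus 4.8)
The plan is to characterize affineness of $C_\circ$ through the geometry of $\Omega$ and $\Omega^\circ$, using the relation \eqref{eq:Ccirctonorm}, namely $C_\circ = P^{-1} \circ \diff\normdot_* \circ Q$. Recall that $P$ parametrizes $\partial\Omega$ by (twice the) area swept, while $Q_\phi = (\cosomp(\phi), \sinomp(\phi))$ parametrizes $\partial\Omega^\circ$ the same way. The key analytic input is Proposition \ref{prop:difftrig}: $\sinomp'(\phi) = \cosom(C_\circ(\phi))$ and $\cosomp'(\phi) = -\sinom(C_\circ(\phi))$, and symmetrically $\sinom'(\theta) = \cosomp(C^\circ(\theta))$, $\cosom'(\theta) = -\sinomp(C^\circ(\theta))$. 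The strategy is: assume $C_\circ$ is affine, i.e. $C_\circ(\phi) = a\phi + b$ for constants $a, b$; the periodicity relation \eqref{eq:periodicity_Ccirc} (in its $C_\circ$-form) forces $a = \pi_{\Omega}/\pi_{\Omega^\circ}$, so $a$ is determined by the areas. Then differentiate the Pythagorean identity \eqref{eq:pytagorean}, $\cosom(C^\circ(\phi)) \cosomp(\phi) + \sinom(C^\circ(\phi))\sinomp(\phi) = 1$ — wait, more cleanly: write the identity $\cosom(\theta)\cosomp(C^\circ(\theta)) + \sinom(\theta)\sinomp(C^\circ(\theta)) = 1$ and use the derivative formulas to extract an ODE.

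Concretely, first I would set $\theta = C_\circ(\phi)$ so that $C^\circ(\theta) = \phi$, and exploit that if $C_\circ$ is affine then $C^\circ$ is its inverse, also affine: $C^\circ(\theta) = (\theta - b)/a$. Plug into the system from Proposition \ref{prop:difftrig}: $\frac{d}{d\phi}\sinomp(\phi) = \cosom(a\phi+b)$ and $\frac{d}{d\phi}\cosomp(\phi) = -\sinom(a\phi+b)$, while $\frac{d}{d\theta}\sinom(\theta) = \cosomp(C^\circ(\theta)) = \cosomp((\theta-b)/a)$ and $\frac{d}{d\theta}\cosom(\theta) = -\sinomp((\theta-b)/a)$. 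Composing, $\frac{d^2}{d\phi^2}\sinomp(\phi) = a\,\cosom'(a\phi+b) = -a\,\sinomp(\phi)$, so $\sinomp$ (and likewise $\cosomp$, and $\sinom, \cosom$) solves $f'' + a f = 0$. Since these are genuine periodic functions, $a > 0$ and the solution is a sinusoid $\sinomp(\phi) = \alpha\sin(\sqrt{a}\,\phi) + \beta\cos(\sqrt{a}\,\phi)$; using $\sinomp(0) = 0$ and matching with $\cosomp$, one gets $\Omega^\circ$ is an ellipse. An ellipse's polar dual is again an ellipse, and a norm whose unit ball is an ellipse (symmetric, centered) is exactly one induced by a scalar product; this gives the sub-Riemannian conclusion. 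For the converse, if $\normdot$ comes from a scalar product then $\Omega$ is an ellipse, the convex trigonometric functions are affine reparametrizations of the classical $\sin,\cos$ (after a linear change of coordinates diagonalizing the scalar product, $\sinom(\theta) = c\sin(\theta/c')$ type expressions), and one checks directly that $C_\circ$ is affine — or more slickly, by the computation in \cite[Sec.\ 2]{borza2024measure} the sub-Riemannian case has $C_\circ(\phi) = \phi$ up to the standard normalization, hence affine.

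An alternative, perhaps cleaner route avoiding ODEs: use \eqref{eq:Ccirctonorm} directly. The map $\diff\normdot_*$ sends $\partial\Omega^\circ$ to... actually it is the duality map, sending a point of $\partial(\text{dual ball})$ to the point of $\partial(\text{primal ball})$ realizing the supremum. Parametrizing both boundaries by area (i.e. via $Q$ and $P$), the claim $C_\circ$ affine means the duality map is, in these area-parametrizations, an affine rescaling of the parameter. One then argues that this rigidity of the Legendre-type correspondence pins down the ellipse: the area-parametrization of $\partial\Omega$ has "velocity" $P'(\theta)$ which, by the defining area property, satisfies $\tfrac12|P(\theta)\wedge P'(\theta)\,d\theta| = \tfrac12 d\theta$, i.e. $P \wedge P' = 1$; similarly $Q\wedge Q' = 1$. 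Combined with the affine relation between parameters and the orthogonality-type constraint that the tangent of $\partial\Omega$ at $P_\theta$ is dual to $Q_{C^\circ(\theta)}$, one derives that the support function of $\Omega$ is a sinusoid, forcing an ellipse.

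The main obstacle I expect is the converse/normalization bookkeeping — cleanly deducing from "$C_\circ$ affine" the exact ODE $f'' + af = 0$ with the right constant, and then translating "unit ball is an ellipse" into "norm induced by a scalar product" without sign or periodicity slip-ups (in particular confirming $a = \pi_\Omega/\pi_{\Omega^\circ}$ from \eqref{eq:periodicity_Ccirc} and that the only periodic solutions are sinusoids, which needs $a>0$, itself following from monotonicity of $C_\circ$). The forward direction (sub-Riemannian $\Rightarrow$ $C_\circ$ affine) is essentially a direct computation once the convex trigonometric functions for an ellipse are written down, so the real content is the rigidity argument showing affineness forces the elliptic shape. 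I would present the ODE argument as the primary proof since it is the most self-contained given Proposition \ref{prop:difftrig}.
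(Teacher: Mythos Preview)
Your proposal is correct and follows essentially the same route as the paper. Both arguments hinge on the observation that $\cosomp''(\phi) = -C_\circ'(\phi)\cosomp(\phi)$ (and the analogous identity for $\sinomp$), obtained by composing the derivative formulas of Proposition \ref{prop:difftrig}; when $C_\circ' \equiv a$ is constant this becomes the harmonic oscillator equation, whose periodic solutions parametrize an ellipse, and the forward direction is handled by an explicit parametrization of the ellipse. The paper is slightly more explicit in pinning down the constants via the Pythagorean identity \eqref{eq:pytagorean} (obtaining $AB_2C=1$), whereas you defer that bookkeeping, but this is cosmetic.
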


\begin{proof} 
Firstly, suppose that a norm $\normdot$ is induced by a scalar product.
Let $\{(a,0),(b_1,b_2)\}$ be an orthonormal basis of the dual norm $\normdot_\ast$,
which is also induced by a scalar product.
Without loss of generality,
we can assume that $ab_2>0$.
Define a curve $t \mapsto(x(t),y(t))$ by
\begin{equation*}
x(t)=a\cos(ct)+b_1\sin(ct),~~~~y(t)=b_2\sin(ct),
\end{equation*}
where $c:=1/(a b_2)$.
Then the generalized trigonometric function $(\cosomp(t),\sinomp(t))$ associated to $\normdot_\ast$ coincides with $(x(t),y(t))$.
Indeed,
clearly the image of $(x(t),y(t))$ is the unit sphere $\{\norm{v}_*=1\}$ and the identity $x\dot{y}-y\dot{x}\equiv 1$ holds by the choice of $c$.
By the formula $\cosomp^{\prime\prime}(t)=-C_\circ^\prime(t) \cosomp(t)$,
we have that $C_\circ^\prime\equiv c^2$.

Conversely,
suppose that $C_\circ^\prime\equiv C^2>0$.
    Recall that the trigonometric functions $(x(t),y(t))=(\cosomp(t),\sinomp(t))$ are recovered by the differential equation
    \begin{equation}
    \label{eq:trig_ode}
        \ddot{x}(t)=-C^2\cdot x(t),\qquad\ddot{y}(t) = -C^2\cdot y(t),
    \end{equation}
    with initial conditions given by $(x(0),y(0))=(x_0,0)$ and $(\dot x(0),\dot y(0))=(\dot x_0,\dot y_0)$. Then, the solution to \eqref{eq:trig_ode} is given by
    \begin{equation}\label{eq:normaltrigonometry}        x(t)=A\cos(Ct)+B_1\sin(Ct),\qquad y(t)=B_2\sin(Ct),
    \end{equation}
    where the constants $A,B_1,B_2\in\R$ are determined by the initial values $x_0$, $\dot x_0$ and $\dot y_0$,
    and we can assume $C>0$ by the symmetries of the (classical) trigonometric functions. Furthermore, the Pythagorean identity \eqref{eq:pytagorean}, Proposition \ref{prop:difftrig} and \eqref{eq:normaltrigonometry} imply the following identity
\begin{align*}
    1&\equiv \cosom(t_\circ) \cosomp(t) + \sinom(t_\circ)\sinomp(t)=\dot{y}(t)x(t)-\dot{x}(t)y(t)\\
&=B_2C\cos(Ct)\left[A\cos(Ct)+B_1\sin(Ct)\right]-\left[-AC\sin(Ct)+B_1C\cos(Ct)\right]B_2\sin(Ct)\\
&=AB_2C,
\end{align*}
showing that $C = 1/(A B_2)$ and that the curve \eqref{eq:normaltrigonometry} is an ellipse. This implies that the norm is induced by a scalar product whose orthonormal basis is $\{(A,0),(B_1,B_2)\}$.
\end{proof}

We now introduce the following finite difference operators, that will be useful in the proof of Theorem \ref{thm:rigidity}.

\begin{definition}
    For $\phi, \omega \in \R$, we define  the first-order finite differences
    \[
    \Delta_\omega C_\circ(\phi) := C_\circ(\phi + \omega) - C_\circ(\phi),\qquad \text{and}\qquad 
    D_\omega C_\circ(\phi) := \frac{\Delta_\omega C_\circ(\phi)}{\omega} = \frac{C_\circ(\phi + \omega) - C_\circ(\phi)}{\omega},
    \]
     as well as the second-order finite differences
    \[
    \Delta_\omega^2 C_\circ(\phi) = \Delta_\omega C_\circ(\phi + \omega) - \Delta_\omega C_\circ(\phi) = C_\circ(\phi + 2 \omega) - 2 C_\circ(\phi + \omega) + C_\circ(\phi),
    \]
    and
    \begin{align*}
        D_\omega^2 C_\circ(\phi) ={}& \frac{\Delta_\omega [D_\omega C_\circ(\phi)]}{\omega} = \frac{D_\omega C_\circ(\phi + \omega) - D_\omega C_\circ(\phi)}{\omega} = \frac{\Delta_\omega C_\circ (\phi + \omega) - \Delta_\omega C_\circ(\phi)}{\omega^2} \\
        ={}& \frac{\Delta_\omega^2 C_\circ(\phi)}{\omega^2} =  \frac{C_\circ(\phi + 2 \omega) - 2 C_\circ(\phi + \omega) + C_\circ(\phi)}{\omega^2}.
    \end{align*}
    
\end{definition}

We are going to prove a series of lemmas before concluding with the proof of Theorem \ref{thm:rigidity}. In these lemmas, $(\hei, \di, \Leb^3)$ is a sub-Finsler Heisenberg group associated with a $C^1$ and strongly convex norm that is not induced by a scalar product. In particular, the angle correspondence map $C_\circ$ is strictly increasing and Lipschitz, thus $\Lip(C_\circ) < +  \infty$ (see \cite[Prop.\ 2.16]{borza2024measure}), and is not affine by Proposition \ref{prop:scalar}.  We define $a \geq 0$ as the quantity given by
\begin{equation}\label{eq:assumptiona>0}
        a:= \essinf_{\phi \in [0,2\pi_{\Omega^\circ})} C'_\circ (\phi) \leq \Lip(C_\circ) < +\infty.
\end{equation}
In this definition and in the rest of the paper, whenever we consider an interval of generalized angles $I\subset\R$, this is identified, with a slight abuse of notation, with its quotient in $\R/2\pi_{\Omega^\circ}\mathbb Z$.

\begin{lemma}
\label{lemma:D2>H}
        There exists a positive constant $H$ such that for every $h>0$, there are $\delta\in (0,h]$ and $\phi \in \rinterval{0}{2 \pi_{\Omega^\circ}}$ for which it holds that 
    \begin{equation}\label{eq:estimatedelta}
        D^2_\delta C_\circ (\phi) \geq H.
    \end{equation}
    \end{lemma}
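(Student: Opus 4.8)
The plan is to argue by contradiction: suppose that for some sequence $h_n \to 0$ we have $D^2_\delta C_\circ(\phi) < \tfrac1n$ for \emph{every} $\delta \in (0,h_n]$ and every $\phi$; I will show this forces $C_\circ$ to be affine (mod $2\pi_{\Omega^\circ}$), contradicting Proposition \ref{prop:scalar}. Actually it is cleaner to aim directly for the quantitative statement. The key observation is that, since $C_\circ$ is strictly increasing, Lipschitz and $2\pi_{\Omega^\circ}$-periodic (in the sense of \eqref{eq:periodicity_Ccirc}), it is not affine on $[0,2\pi_{\Omega^\circ}]$ precisely because the two endpoint values are constrained by periodicity while the graph must bend somewhere. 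Concretely, by \eqref{eq:periodicity_Ccirc} the difference quotient of $C_\circ$ over a full period equals $\pi_{\Omega^\circ}/\pi_{\Omega}$, call it $m$. If $C_\circ$ were affine it would have constant slope $m$; since it is not affine and is Lipschitz, there is a point $\phi_0$ and a scale where the difference quotient strictly exceeds $m$ by a fixed amount, and another where it is strictly below $m$ by a fixed amount. The quantity $H$ will be extracted from this fixed gap.

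The main steps I would carry out are as follows. First, fix the constant: let $C_\circ$ be viewed as a function on $\R$ satisfying $C_\circ(\phi + 2\pi_{\Omega^\circ}) \cdot$ (wait — the periodicity is in the domain $\Omega^\circ$, so) $C_\circ(\phi + 2\pi_{\Omega^\circ} k) = C_\circ(\phi) + 2\pi_{\Omega} k$; hence $g(\phi) := C_\circ(\phi) - m\phi$ with $m = \pi_{\Omega}/\pi_{\Omega^\circ}$ is genuinely $2\pi_{\Omega^\circ}$-periodic, continuous, and not constant (else $C_\circ$ is affine). Let $\phi_{\max}, \phi_{\min}$ be points where $g$ attains its max and min, and set $2\eta := g(\phi_{\max}) - g(\phi_{\min}) > 0$. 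Second, telescoping: for a dyadic-type partition of the period into $\delta$-steps, $\sum_{j} \Delta^2_\delta C_\circ(\phi_0 + j\delta)$ telescopes to a boundary expression involving $\Delta_\delta C_\circ$ at the two ends, which by periodicity is controlled; more usefully, $\sum_j \delta \cdot D_\delta C_\circ(\phi_0 + j\delta)$ over a full period equals $C_\circ(\text{end}) - C_\circ(\text{start}) = 2\pi_{\Omega}$, so the \emph{average} of $D_\delta C_\circ$ over the period is exactly $m$, at every scale $\delta$. Third — the crux — since $g$ is not constant, for $\delta$ small enough the discrete values $D_\delta C_\circ(\phi_0+j\delta)$ cannot all lie within $\eta/(2\pi_{\Omega^\circ})$ of their average $m$: indeed if they did, a Riemann-sum/telescoping estimate would force the oscillation of $g$ to be smaller than $2\eta$, a contradiction. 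Hence at every sufficiently small scale $\delta$ there exist indices $j < k$ with $D_\delta C_\circ(\phi_0 + j\delta) \le m - c$ and $D_\delta C_\circ(\phi_0 + k\delta) \ge m + c$ for a fixed $c > 0$ (coming from $\eta$ and the period length). Fourth, pass from "two first differences far apart in value" to "one large second difference at a possibly different, smaller scale": chaining the indices between $j$ and $k$, the discrete increments $D_\delta C_\circ(\phi_0+(i+1)\delta) - D_\delta C_\circ(\phi_0+i\delta) = \delta\, D^2_\delta C_\circ(\phi_0 + i\delta)$ must sum to at least $2c$ over at most $2\pi_{\Omega^\circ}/\delta$ terms, so some term satisfies $D^2_\delta C_\circ(\phi_0 + i\delta) \ge 2c\delta / (2\pi_{\Omega^\circ}) \cdot (1/\delta) = c/\pi_{\Omega^\circ} =: H$. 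Since this works for all $\delta$ small and, for given $h>0$, we may take $\delta \le h$ of the appropriate form, the lemma follows with this $H$ (independent of $h$).

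The main obstacle I anticipate is making the third step — "non-constant $g$ forces spread-out difference quotients at \emph{every} small scale" — genuinely uniform in $\delta$, rather than just for one cleverly chosen $\delta$. The risk is that the oscillation of $g$ hides at scales finer than $\delta$; but since $g$ is continuous and periodic, its oscillation is witnessed at a \emph{fixed} pair of points $\phi_{\max},\phi_{\min}$, and for $\delta$ smaller than a threshold depending only on the modulus of continuity of $g$, the values $g(\phi_{\max}), g(\phi_{\min})$ are approximated to within $\eta/2$ by values at nearby grid points $\phi_0 + j\delta$; then the telescoping identity $g(\phi_0 + k\delta) - g(\phi_0 + j\delta) = \delta\sum_{i=j}^{k-1}\big(D_\delta C_\circ(\phi_0 + i\delta) - m\big)$ turns a spread in $g$-values of size $\ge \eta$ into a guaranteed spread in $D_\delta C_\circ$ values, quantitatively. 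One technical point to handle with care: the indices realizing max and min of the partial sums of $(D_\delta C_\circ - m)$ need not be $j$ and $k$ themselves, but choosing them as the argmax/argmin of the partial sums is exactly what is needed so that all intermediate increments between them have a sign, which is what lets step four extract a single large $D^2_\delta C_\circ$. Everything else (Lipschitz bounds, periodicity bookkeeping, translating between $\Delta$ and $D$) is routine.
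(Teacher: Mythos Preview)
Your approach is correct in outline and takes a genuinely different route from the paper's. The paper's proof is shorter: it first locates a single pair $(\phi_0,\omega_0)$ with $D^2_{\omega_0}C_\circ(\phi_0)=:H>0$ (this exists because the periodic function $\phi\mapsto C_\circ(\phi)-(\pi_\Omega/\pi_{\Omega^\circ})\phi$, being non-constant, cannot be concave, so $C_\circ$ is not midpoint-concave), and then uses the dyadic identity
\[
D^2_{\omega}C_\circ(\phi)=\tfrac14 D^2_{\omega/2}C_\circ(\phi)+\tfrac12 D^2_{\omega/2}C_\circ(\phi+\tfrac{\omega}{2})+\tfrac14 D^2_{\omega/2}C_\circ(\phi+\omega),
\]
which after iteration exhibits $H$ as a weighted average of values $D^2_{\omega_0/2^n}C_\circ(\cdot)$; hence at every dyadic scale $\omega_0/2^n$ at least one of these is $\ge H$, and choosing $n$ with $\omega_0/2^n\le h$ finishes. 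Your route instead converts the oscillation of $g=C_\circ-m\,\cdot$ first into spread of the first differences $D_\delta C_\circ$ on a $\delta$-grid, and then, via a second telescoping plus pigeonhole, into a single large $D^2_\delta C_\circ$. This also works and even yields an explicit $H$ in terms of the oscillation of $g$ and the period; the price is more bookkeeping. One point to tighten: in Step~3, from ``not all $D_\delta C_\circ$ lie within $c$ of $m$'' you only obtain a one-sided deviation, not both $\le m-c$ and $\ge m+c$; to get a lower bound on $\max_i D_\delta C_\circ - \min_i D_\delta C_\circ$, argue separately over the arc from the grid-minimum of $g$ to its grid-maximum (which forces a large value of $D_\delta C_\circ$) and over the complementary arc (which forces a small one). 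Your closing remark about intermediate increments having a fixed sign is neither true nor needed: Step~4 only uses the total of the telescoping sum, and pigeonhole does the rest.
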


\begin{proof}
    Observe that \eqref{eq:periodicity_Ccirc} implies that the function
    \begin{equation}
       [0,2\pi_{\Omega^\circ})\ni \phi \mapsto C_\circ(\phi) - \frac{\pi_\Omega}{\pi_{\Omega^\circ}} \phi
    \end{equation}
    is $\pi_{\Omega^\circ}$-periodic and non-constant. Thus it is not concave on $[0,2\pi_{\Omega^\circ})$. Consequently, $C_\circ$ is not concave and we can find $\phi_0$ and $\omega_0$ such that $D_{\omega_0}^2 C_\circ(\phi_0)=:H>0$. To prove this, we argue by contradiction, assuming that $D_{\omega_0}^2 C_\circ(\phi_0)\leq 0$, for every choice of angles $\phi_0$, $\omega_0$. But this implies that $C_\circ$ is midpoint concave, and thus concave. 
    
    Now, assume by contradiction that there exists $h>0$ such that, for every $\delta\in (0,h]$ and every $\phi \in \rinterval{0}{2 \pi_{\Omega^\circ}}$, we have $D^2_\delta C_\circ (\phi) < H$. From the very definition of $D_\omega^2C_\circ(\phi)$, we have that
    \begin{equation}
        D_{\omega_0}^2C_\circ(\phi_0)=\frac{1}{4}D^2_{\omega_0/2}C_\circ(\phi_0)+\frac{1}{2}D_{\omega_0/2}^{2}C_\circ(\phi_0+\omega_0/2)+\frac{1}{4}D_{\omega_0/2}^2C_\circ(\phi_0+\omega_0).
    \end{equation}
    Iterating this identity, for every $n$ we obtain
    \begin{align*}
        D_{\omega_0}^2C_\circ(\phi_0) = \frac{1}{2^{2n}}{}& \sum_{k=0}^{2^n-1} (k+1) D^2_{\omega_0/2^n} C_\circ(\phi_0 + k\omega_0/2^n) \\
        &+ \frac{1}{2^{2n}}\sum_{k=2^n}^{2^{n+1}-2} (2^{n+1}-1-k) D^2_{\omega_0/2^n}C_\circ (\phi_0 + k\omega_0/2^n).
    \end{align*}
   Then, taking $n$ sufficiently large such that $\omega_0/2^n<h$, we deduce that 
   \begin{equation}
       D_{\omega_0}^2C_\circ(\phi_0) <  \frac H{2^{2n}} \Bigg[\sum_{k=0}^{2^n-1} (k+1) + \sum_{k=2^n}^{2^{n+1}-2} (2^{n+1}-1-k) \Bigg]= \frac H{2^{2n}} \Bigg[ 2^n + 2 \sum_{k=1}^{2^{n}-1} k \Bigg] = H,
   \end{equation}
   finding a contradiction and concluding the proof.
\end{proof}

Note that with the assumptions and notations of Lemma \ref{lemma:D2>H}, we also have that
    \begin{equation}
    \label{eq:TaylorCcirc}
        C_\circ(\phi + 2 \delta) \geq C_\circ(\phi) + 2 \delta D_\delta C_\circ (\phi) + H \delta^2,
    \end{equation}
in fact the following identity holds:
    \begin{align*}
        \Delta_{2 \delta} C_\circ(\phi) ={}& 2 (C_\circ(\phi + \delta) -  C_\circ(\phi)) + C_\circ(\phi + 2 \delta) - 2 C_\circ(\phi + \delta) + C_\circ(\phi) \\
        ={}& 2 \delta D_\delta C_\circ(\phi) + D^2_\delta C_\circ(\phi) \delta^2.
    \end{align*}
In Lemma \ref{lemma:Deltat<=} and Lemma \ref{lemma:<=Deltat<=}, $H$ will stand for the quantity given by Lemma \ref{lemma:D2>H}.

\begin{lemma}
\label{lemma:Deltat<=}
    For every $h > 0$, there are $\delta \in \linterval{0}{h}$, $\phi \in \rinterval{0}{2 \pi_{\Omega^\circ}}$, $\omega \in \linterval{0}{2 \delta}$, and
    \begin{equation}
        \label{eq:K>=A}
        K \geq A := \essinf_{t \in \rinterval{\phi}{\phi + \omega}} C_\circ'(t) \geq a
    \end{equation}
    for which it holds that
    \begin{equation}
        \label{eq:Deltat<=}
        \Delta_t C_\circ(\phi)\leq \left(K + \frac{H \delta}{2}\right) t,\quad\forall t \in \interval{0}{\omega} \qquad \text{and}\qquad \Delta_\omega C_\circ(\phi)=\left(K + \frac{H \delta}{2}\right) \omega.
    \end{equation}
\end{lemma}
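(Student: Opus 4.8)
The plan is to apply Lemma~\ref{lemma:D2>H} with the given $h$ to obtain $\delta_0 \in \linterval{0}{h}$ and a basepoint $\phi_0$ with $D^2_{\delta_0} C_\circ(\phi_0) \ge H$, i.e.\ $\Delta^2_{\delta_0} C_\circ(\phi_0) \ge H\delta_0^2$, and then to read off $\phi$, $\omega$ and $K$ from the \emph{least concave majorant} $\widehat{C}$ of $C_\circ$ on the interval $[\phi_0, \phi_0 + 2\delta_0]$. We will take $\delta := \delta_0$. Since $C_\circ$ is Lipschitz, $\widehat{C}$ is well defined, concave and Lipschitz, it coincides with $C_\circ$ at the two endpoints $\phi_0$ and $\phi_0 + 2\delta_0$, and on the closure of each connected component of the open set $\{\widehat{C} > C_\circ\}$ it is affine. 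These are the classical properties of concave envelopes of Lipschitz functions, which I would recall in a line or two.

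First I would locate the relevant affine piece of $\widehat{C}$. From $C_\circ(\phi_0 + \delta_0) = \tfrac12\big(C_\circ(\phi_0) + C_\circ(\phi_0+2\delta_0)\big) - \tfrac12\Delta^2_{\delta_0}C_\circ(\phi_0)$ together with $\Delta^2_{\delta_0}C_\circ(\phi_0) \ge H\delta_0^2$, and from concavity of $\widehat{C}$ together with $\widehat{C} = C_\circ$ at the endpoints (which give $\widehat{C}(\phi_0+\delta_0) \ge \tfrac12\big(C_\circ(\phi_0) + C_\circ(\phi_0+2\delta_0)\big)$), one obtains
\[
\widehat{C}(\phi_0+\delta_0) - C_\circ(\phi_0+\delta_0) \ \ge\ \tfrac12 H\delta_0^2 \ >\ 0 .
\]
Hence $\phi_0 + \delta_0$ lies strictly inside a connected component $(\alpha,\beta)$ of $\{\widehat{C} > C_\circ\}$, with $\phi_0 \le \alpha < \phi_0 + \delta_0 < \beta \le \phi_0 + 2\delta_0$; on $[\alpha,\beta]$ the function $\widehat{C}$ is affine with some slope $\lambda_0$, and $\widehat{C}(\alpha) = C_\circ(\alpha)$, $\widehat{C}(\beta) = C_\circ(\beta)$ (each of $\alpha$, $\beta$ is either an endpoint of $[\phi_0,\phi_0+2\delta_0]$ or a contact point of $\widehat{C}$ with $C_\circ$). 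I then set $\phi := \alpha$, $\ \omega := \beta - \alpha \in \linterval{0}{2\delta}$, $\ K := \lambda_0 - \tfrac{H\delta}{2}$.

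To verify \eqref{eq:Deltat<=}: for $t \in [0,\omega]$, using $C_\circ \le \widehat{C}$, $C_\circ(\alpha) = \widehat{C}(\alpha)$ and affineness of $\widehat{C}$ on $[\alpha,\beta]$,
\[
\Delta_t C_\circ(\phi) = C_\circ(\alpha+t) - C_\circ(\alpha) \ \le\ \widehat{C}(\alpha+t) - \widehat{C}(\alpha) = \lambda_0\, t = \Big(K + \tfrac{H\delta}{2}\Big) t,
\]
with equality at $t = \omega$ because $C_\circ(\beta) = \widehat{C}(\beta)$. For the bound $K \ge A$, set $u := \phi_0 + \delta_0 - \alpha \in \linterval{0}{\delta_0}$; since $\widehat{C}$ is affine of slope $\lambda_0$ on $[\alpha,\beta] \ni \phi_0+\delta_0$, the displayed defect rewrites as $C_\circ(\alpha + u) - C_\circ(\alpha) \le \lambda_0 u - \tfrac12 H\delta_0^2 \le \big(\lambda_0 - \tfrac{H\delta_0}{2}\big)u$, using $u \le \delta_0$ in the last step. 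Dividing by $u$ and using absolute continuity of $C_\circ$, the mean of $C_\circ'$ over $[\alpha,\alpha+u)$ is $\le \lambda_0 - \tfrac{H\delta_0}{2}$; since $[\alpha,\alpha+u) \subset [\alpha,\beta)$, this gives $A = \essinf_{[\alpha,\beta)}C_\circ' \le \lambda_0 - \tfrac{H\delta_0}{2} = K$. The remaining requirements are immediate: $A \ge a$, $\omega > 0$, $\delta = \delta_0 \in \linterval{0}{h}$, and $\omega = \beta - \alpha \le 2\delta_0 = 2\delta$.

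The crux — and the reason one cannot simply take $\phi = \phi_0$ — is that the chord (support‑from‑above) requirement and the lower bound $K \ge \essinf_{[\phi,\phi+\omega)}C_\circ'$ conflict near $\phi_0$: if $C_\circ'$ is ``front‑loaded'' on $[\phi_0,\phi_0+2\delta_0]$, then $t\mapsto \Delta_tC_\circ(\phi_0)/t$ is maximized for small $t$, where $\essinf C_\circ'$ is too large. Passing to the concave majorant automatically selects the correct basepoint $\alpha$ (the left end of the affine piece straddling the ``dip'' at $\phi_0+\delta_0$) and window $\omega = \beta-\alpha$, and the quantitative second‑difference bound $\Delta^2_{\delta_0}C_\circ(\phi_0)\ge H\delta_0^2$ is exactly what forces $C_\circ'$ to fall, on average, at least $\tfrac{H\delta_0}{2}$ below $\lambda_0$ on a subwindow of length $\le\delta_0$. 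I expect the only mildly delicate ingredient to be the structure theory of the least concave majorant (affineness off the contact set), which is classical.
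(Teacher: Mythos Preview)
Your proof is correct and takes a genuinely different route from the paper's. The paper proceeds by hand: it fixes $K := D_\delta C_\circ(\overline{\phi})$ (the slope of the chord over the \emph{first} half $[\overline{\phi},\overline{\phi}+\delta]$), then defines $\phi$ as the last time in $[\overline{\phi},\overline{\phi}+\delta]$ where $C_\circ$ lies on or above the line of slope $K+\tfrac{H\delta}{2}$ through $(\overline{\phi},C_\circ(\overline{\phi}))$, and $\omega$ as the first return time to that line; the Taylor-type identity \eqref{eq:TaylorCcirc} guarantees the return occurs by $\overline{\phi}+2\delta$, and a short chord computation on $[\phi,\overline{\phi}+\delta]$ yields $K\ge A$. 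Your argument replaces this explicit ``last exit / first return'' construction by the least concave majorant $\widehat{C}$ of $C_\circ$ on $[\phi_0,\phi_0+2\delta_0]$: the affine piece of $\widehat{C}$ straddling $\phi_0+\delta_0$ automatically produces $\phi=\alpha$, $\omega=\beta-\alpha$, and the slope $\lambda_0=K+\tfrac{H\delta}{2}$, while the second-difference bound translates into the defect estimate $\widehat{C}(\phi_0+\delta_0)-C_\circ(\phi_0+\delta_0)\ge \tfrac12 H\delta_0^2$, which you convert into $K\ge A$ via the mean value of $C_\circ'$ on $[\alpha,\phi_0+\delta_0]$. The two constructions produce different $(\phi,\omega,K)$ in general. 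The paper's argument is more elementary and self-contained (no envelope theory needed), whereas yours is more geometric and makes transparent \emph{why} such a window must exist: the concave majorant packages the crossing-time argument into a single structural statement. The only ingredient you invoke without proof is the affineness of $\widehat{C}$ on components of $\{\widehat{C}>C_\circ\}$; this is indeed classical and a one-line remark suffices.
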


\begin{proof}
    Given any $h>0$, we apply Lemma \ref{lemma:D2>H} finding $\delta\in (0,h]$ and $\overline{\phi} \in \rinterval{0}{2 \pi_{\Omega^\circ}}$ for which
    \[
        D^2_\delta C_\circ (\overline{\phi}) \geq H.
    \]
     We set $K := D_\delta C_\circ(\overline{\phi})$ and define
    \begin{align*}
        \phi :={}& \sup \left\{ \theta \in \interval{\overline{\phi}}{\overline{\phi} + \delta} : C_\circ(\theta) - C_\circ(\overline{\phi}) \geq \left(K + \frac{H \delta}{2}\right) (\theta - \overline{\phi}) \right\}, \\
        \omega :={}& \inf \left\{ t \in \linterval{0}{\overline{\phi} + 2 \delta - \phi} : \Delta_tC_\circ(\phi) \geq \left(K + \frac{H \delta}{2}\right) t \right\}.
    \end{align*}
    Observe that, by definition of $K$,
    \[
    C_\circ(\overline{\phi} + \delta) - C_\circ(\overline{\phi}) = K  \delta < \left(K + \frac{H \delta}{2}\right)\delta,
    \]
    which ensures that $\phi$ is well defined and that $\phi < \overline{\phi} + \delta$. By continuity of $C_\circ$, it actually holds that
    \[
    C_\circ(\phi) - C_\circ(\overline{\phi}) = \left(K + \frac{H \delta}{2}\right) (\phi - \overline{\phi}).
    \]
    Moreover, taking into account \eqref{eq:TaylorCcirc}, we note that
    \begin{align*}
        C_\circ(\phi + (\overline{\phi} + 2 \delta - \phi)) - C_\circ(\phi) ={}& C_\circ(\overline{\phi} + 2 \delta) - C_\circ(\overline{\phi}) + C_\circ(\overline{\phi}) - C_\circ(\phi) \\
        \geq{}& 2 \delta K + H \delta^2 + \left(K + \frac{H \delta}{2}\right) (\overline{\phi} - \phi) \\
        ={}& \left(K + \frac{H \delta}{2}\right) (\overline{\phi} + 2 \delta - \phi).
    \end{align*}
    Thus, the set that determines $\omega$ is not empty, which implies that $\omega$ is well-defined. Furthermore, the very definition of $\phi$ guarantees that $\omega > \overline\phi + \delta - \phi > 0$. Then, since $C_\circ$ is continuous, \eqref{eq:Deltat<=} follows from the definitions of $\phi$ and $\omega$. Moreover, we observe that 
    \begin{equation}
    \begin{split}
         C_\circ(\overline\phi + \delta) - C_\circ(\phi)=  C_\circ(\overline\phi + \delta)- &C_\circ(\overline \phi) - \big[C_\circ(\phi)- C_\circ( \overline\phi)\big] \\
         &= K \delta  - \left(K + \frac{H \delta}{2}\right) (\phi - \overline{\phi}) \leq K (\overline\phi + \delta - \phi)
    \end{split}
    \end{equation}
    and therefore we deduce that
    \begin{equation}
        K \geq \essinf_{t \in \rinterval{\phi}{\overline \phi + \delta}} C_\circ'(t) \geq A \geq a,
    \end{equation}
    proving \eqref{eq:K>=A}.
\end{proof}

Given $C > 0$, 
we define
$S_C := \left\{ \psi \in \rinterval{0}{2 \pi_{\Omega^\circ}} : C_\circ'(\psi) \leq C \right\}$. Moreover, for a subset $S\subset \R$, we denote by $D(S)$ the set of density points of $S$, which are contained in $S$.

\begin{lemma}
\label{lemma:<=Deltat<=}
    For every $h > 0$, there are $\delta \in \linterval{0}{h}$ and $A \geq a$ that satisfy the following: for all $\epsilon > 0$ sufficiently small, there are $\phi \in D(S_{A + \epsilon})$, $\omega \in \linterval{0}{2 \delta}$, and $B \geq H \delta/2$ for which it holds
    \begin{equation}
        \label{eq:<=Deltat<=}
         A t \leq \Delta_t C_\circ(\phi) \leq (A + B) t \quad\text{for all } t \in \interval{0}{\omega}\qquad \text{and}\qquad  \Delta_\omega C_\circ(\phi) = (A + B) \omega.
    \end{equation}
\end{lemma}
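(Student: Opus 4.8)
The idea is to start from the output of Lemma~\ref{lemma:Deltat<=} and refine the angle $\phi$ to a density point of a sublevel set of $C_\circ'$, at the cost of slightly enlarging the essential infimum threshold by $\epsilon$. Concretely, given $h>0$, I would apply Lemma~\ref{lemma:Deltat<=} to obtain $\delta\in(0,h]$, an angle $\phi_0\in\rinterval{0}{2\pi_{\Omega^\circ}}$, a width $\omega_0\in(0,2\delta]$ and a constant $K\ge A_0:=\essinf_{t\in\rinterval{\phi_0}{\phi_0+\omega_0}}C_\circ'(t)\ge a$ satisfying $\Delta_tC_\circ(\phi_0)\le (K+H\delta/2)t$ for all $t\in\interval{0}{\omega_0}$ with equality at $t=\omega_0$. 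Set $A:=A_0$; this is the constant the lemma promises (note it depends only on $h$, through $\delta$, not on $\epsilon$). The key point is that since $A=\essinf_{\rinterval{\phi_0}{\phi_0+\omega_0}}C_\circ'$, for every $\epsilon>0$ the set $S_{A+\epsilon}\cap\rinterval{\phi_0}{\phi_0+\omega_0}$ has positive Lebesgue measure, hence by the Lebesgue density theorem it contains a density point; so $D(S_{A+\epsilon})$ meets this interval.

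**Locating $\phi$ and $\omega$.** Fix such $\epsilon>0$ small (small enough that, say, $A+\epsilon<K+H\delta/2$, which is possible since $K\ge A$ and we may also shrink $\delta$ — or rather keep $\delta$ and note $H\delta/2>0$ is a fixed positive gap; if $K>A$ strictly one argues directly, and if $K=A$ the gap $H\delta/2$ already does the job). Pick a density point $\phi\in D(S_{A+\epsilon})$ inside $\rinterval{\phi_0}{\phi_0+\omega_0}$. Now I would mimic the stopping-time construction of Lemma~\ref{lemma:Deltat<=}: define
\[
\omega := \inf\Big\{ t\in\linterval{0}{\,\cdot\,} : \Delta_tC_\circ(\phi)\ge (A+H\delta/2)\,t\Big\},
\]
where the upper endpoint of the range of $t$ is chosen so that $\phi+t$ stays within $\phi_0+2\delta$ (recall $\omega_0\le 2\delta$ and $\phi\ge\phi_0$, so $\phi_0+2\delta-\phi\ge 2\delta-\omega_0\ge 0$; one may need to also use that $\Delta C_\circ$ over $\rinterval{\phi_0}{\phi_0+2\delta}$ overshoots the affine comparison by $H\delta^2$ thanks to \eqref{eq:TaylorCcirc}, exactly as in Lemma~\ref{lemma:Deltat<=}, to guarantee the set is nonempty). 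Because $\phi\in S_{A+\epsilon}$ is a density point, $\Delta_tC_\circ(\phi)=\int_\phi^{\phi+t}C_\circ'\le (A+\epsilon)t+o(t)$ as $t\to 0^+$, which is $<(A+H\delta/2)t$ for small $t$ once $\epsilon<H\delta/2$; hence $\omega>0$. By continuity of $C_\circ$ we get $\Delta_\omega C_\circ(\phi)=(A+H\delta/2)\omega$ and $\Delta_tC_\circ(\phi)\le (A+H\delta/2)t$ for $t\in\interval{0}{\omega}$. Setting $B:=H\delta/2$ gives the upper bound in \eqref{eq:<=Deltat<=} with $B\ge H\delta/2$ (in fact equality); the lower bound $\Delta_tC_\circ(\phi)\ge At$ holds because $C_\circ'\ge a$... more precisely, I would instead define $A$ as $\essinf_{t\in\rinterval{\phi}{\phi+\omega}}C_\circ'(t)$ a posteriori, or argue that on the chosen interval $C_\circ'\ge A$ essentially; the cleanest route is to take $\omega$ small enough (shrinking $h$ from the start if needed) that $\rinterval{\phi}{\phi+\omega}\subset\rinterval{\phi_0}{\phi_0+\omega_0}$, so that $C_\circ'\ge A_0=A$ a.e. there, yielding $\Delta_tC_\circ(\phi)\ge At$.

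**Main obstacle.** The delicate point is reconciling the two requirements on $A$: it must simultaneously be a lower bound for $\Delta_tC_\circ(\phi)/t$ (forcing $A\le\essinf_{\rinterval{\phi}{\phi+\omega}}C_\circ'$) and small enough that a density point of $S_{A+\epsilon}$ exists near $\phi_0$ — and all of this while keeping $A$ independent of $\epsilon$. The resolution is that $A=\essinf_{\rinterval{\phi_0}{\phi_0+\omega_0}}C_\circ'$ is determined purely by the output of Lemma~\ref{lemma:Deltat<=} (hence by $h$), and by the very definition of essential infimum, $S_{A+\epsilon}$ has positive measure in $\rinterval{\phi_0}{\phi_0+\omega_0}$ for every $\epsilon>0$; the density point $\phi$ and the width $\omega$ are what depend on $\epsilon$. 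One must be slightly careful that the stopping time $\omega$ does not escape the interval $\rinterval{\phi_0}{\phi_0+\omega_0}$ on which the bound $C_\circ'\ge A$ is available — this is handled by the overshoot estimate \eqref{eq:TaylorCcirc} combined with $\omega_0\le 2\delta$, guaranteeing $\omega$ is finite and, after possibly replacing the comparison interval's right endpoint, that $\omega$ lies in $\linterval{0}{2\delta}$ as required.
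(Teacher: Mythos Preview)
Your approach is essentially the same as the paper's: apply Lemma~\ref{lemma:Deltat<=}, set $A:=\essinf_{\rinterval{\phi_0}{\phi_0+\omega_0}}C_\circ'$, choose a density point $\phi\in D(S_{A+\epsilon})$ inside $\rinterval{\phi_0}{\phi_0+\omega_0}$, and run a stopping-time argument. The one genuine difference is that you fix $B=H\delta/2$, whereas the paper takes $B$ to be the average slope of $C_\circ$ from $\phi$ to $\phi_0+\omega_0$ minus $A$ and then checks $B\ge H\delta/2$. Your choice works and is arguably cleaner; both are valid since the statement only requires $B\ge H\delta/2$.

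Your exposition, however, contains unnecessary detours that obscure a clean argument. You never need to go out to $\phi_0+2\delta$ or invoke the overshoot estimate~\eqref{eq:TaylorCcirc}: the stopping time with threshold $A+H\delta/2$ already terminates inside $\interval{\phi_0}{\phi_0+\omega_0}$. Indeed, writing $\phi=\phi_0+\bar t$ with $\bar t\in\rinterval{0}{\omega_0}$, the conclusions of Lemma~\ref{lemma:Deltat<=} give
\[
\Delta_{\omega_0-\bar t}C_\circ(\phi)=\Delta_{\omega_0}C_\circ(\phi_0)-\Delta_{\bar t}C_\circ(\phi_0)\ge (K+\tfrac{H\delta}{2})(\omega_0-\bar t)\ge (A+\tfrac{H\delta}{2})(\omega_0-\bar t),
\]
so the stopping set is nonempty with upper limit $\omega_0-\bar t$, hence $\omega\le\omega_0-\bar t$ and $\interval{\phi}{\phi+\omega}\subset\interval{\phi_0}{\phi_0+\omega_0}$. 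This immediately gives the lower bound $\Delta_tC_\circ(\phi)\ge At$ from the definition of $A$, with no need to ``shrink $h$'' or redefine $A$ a posteriori (the latter would be illegal anyway, since $A$ must be independent of $\epsilon$). Once you streamline this, your proof is complete and matches the paper's.
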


\begin{proof}
     Given any $h>0$, we apply Lemma \ref{lemma:Deltat<=} finding $\delta \in \linterval{0}{h}$, $\overline\phi \in \rinterval{0}{2 \pi_{\Omega^\circ}}$, $\overline\omega \in \linterval{0}{2 \delta}$, and $K \geq A := \essinf_{t \in \rinterval{\overline\phi}{\overline\phi + \overline\omega}} C_\circ'(t) \geq a$ for which
      \begin{equation}\label{eq:ddddd}
         \Delta_t C_\circ(\overline\phi)\leq \left(K + \frac{H \delta}{2}\right) t,\quad\forall t \in \interval{0}{\overline\omega} \qquad \text{and}\qquad \Delta_{\overline\omega} C_\circ(\overline\phi)=\left(K + \frac{H \delta}{2}\right) \overline\omega.
    \end{equation}
    Let $\epsilon > 0$ sufficiently small and pick any $\phi \in D(S_{A + \epsilon}(\overline{\phi}, \overline{\omega}))$, where we denote
    \[
    S_{A + \epsilon}(\overline{\phi}, \overline{\omega}) := \left\{ \theta \in \rinterval{\overline{\phi}}{\overline{\phi} + \overline{\omega}} : C_\circ'(\theta) \leq A + \epsilon \right\} \subseteq S_{A + \epsilon}.
    \]
    Note that, by definition of $A$, this set is not empty and in addition $\varphi=\overline\varphi+\overline t$, for some $\overline t\in [0,\bar\omega]$. Now, we introduce
    \begin{align*}
        B :={}& \frac{C_\circ(\overline{\phi} + \overline{\omega}) - C_\circ(\phi)}{\overline{\phi} + \overline{\omega} - \phi} - A = \frac{C_\circ(\overline{\phi} + \overline{\omega}) - C_\circ(\overline{\phi}) + C_\circ(\overline{\phi}) - C_\circ(\phi)}{\overline{\phi} + \overline{\omega} - \phi} - A \\
        ={}&\frac{\Delta_{\overline\omega}C_\circ(\overline\varphi) -\Delta_{\overline t} C_\circ(\overline\phi)}{\overline{\phi} + \overline{\omega} - \phi} -A
        \geq \frac{(K + H \delta/2) (\overline{\phi} + \overline{\omega} - \phi)}{\overline{\phi} + \overline{\omega} - \phi} - A \geq \frac{H \delta}{2} > 0,
    \end{align*}
    where the first inequality follows from \eqref{eq:ddddd}. Now, we take
    \[
    \omega := \inf \{t \in \linterval{0}{\overline{\phi} + \overline{\omega} - \phi} : \Delta_tC_\circ(\phi) = C_\circ(\phi + t) - C_\circ(\phi) \geq (A + B) t \}.
    \]
    The definition of $B$ ensures that
    \[
    C_\circ(\phi + (\overline{\phi} + \overline{\omega} - \phi)) - C_\circ(\phi) = C_\circ(\overline{\phi} + \overline{\omega}) - C_\circ(\phi) = (A + B) (\overline{\phi} + \overline{\omega} - \phi)
    \]
    and thus that $\omega$ is well defined. Moreover, since $\phi\in D(S_{A + \epsilon}(\overline{\phi}, \overline{\omega}))\subset S_{A+\varepsilon}$, then $C_\circ'(\varphi)\leq A+\varepsilon$ and we can deduce that $\omega > 0$.  Now, the lower bound of \eqref{eq:<=Deltat<=} is a consequence of the definition of $A$, while the upper bound follows from the definitions of $\phi$ and $\omega$. The equality $\Delta_\omega C_\circ(\phi) = (A + B) \omega$ is justified by the continuity of $C_\circ$.
\end{proof}

For the convenience of the reader, we recall here the definition of $P(\varphi,\omega)$ given in \eqref{eq:JRaround0}:
\begin{equation}
P(\phi, \omega) :=\frac{1}{2} \int_{\phi}^{\phi + \omega} \left( \int_{\phi}^{\phi + \omega} (t - s)^2 C_\circ'(t) C_\circ'(s) \diff s \right) \diff t,\qquad \forall\,(\varphi,\omega)\in \mathcal U.
\end{equation}

\begin{lemma}
\label{lem:integrationbypartsmanymanytimes}
    For all $(\phi, \omega) \in \mathcal{U}$, we have that
    \begin{equation}\label{eq:partsandparts}
        P(\phi, \omega) = 2 \Delta_\omega C_\circ(\phi) \int_0^\omega (\omega - t) \Delta_t C_\circ(\phi) \diff t - \bigg(\int_0^\omega \Delta_t C_\circ(\phi) \diff t\bigg)^2.
    \end{equation}
\end{lemma}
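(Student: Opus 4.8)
The strategy is to reduce the double integral defining $P(\phi,\omega)$ to products of single integrals by expanding the kernel $(t-s)^2$, and then to remove the derivatives $C_\circ'$ by integrating by parts, so that only integrals of the finite difference $\Delta_t C_\circ(\phi)$ survive.

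First I would translate the variables, setting $t=\phi+u$ and $s=\phi+v$, and introduce the auxiliary function $g(u):=\Delta_u C_\circ(\phi)=C_\circ(\phi+u)-C_\circ(\phi)$. Since $C_\circ$ is Lipschitz, $g$ is Lipschitz (hence absolutely continuous) with $g(0)=0$ and $g'(u)=C_\circ'(\phi+u)$ for a.e.\ $u$. This rewrites $P(\phi,\omega)=\tfrac12\int_0^\omega\int_0^\omega (u-v)^2 g'(u)g'(v)\diff v\diff u$. Expanding $(u-v)^2=u^2-2uv+v^2$ and exploiting the symmetry of the square $[0,\omega]^2$ in $u$ and $v$ (the integrand is bounded, so Fubini applies), the $u^2$ and $v^2$ contributions coincide, so
\[
P(\phi,\omega)=\bigg(\int_0^\omega u^2 g'(u)\diff u\bigg)\bigg(\int_0^\omega g'(v)\diff v\bigg)-\bigg(\int_0^\omega u\,g'(u)\diff u\bigg)^2 .
\]

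Next I would integrate by parts in each factor; this is legitimate because the integrands are products of absolutely continuous functions, and when $\omega<0$ all identities hold verbatim with $\int_0^\omega$ read as an oriented integral. One gets $\int_0^\omega g'(v)\diff v=g(\omega)=\Delta_\omega C_\circ(\phi)$, $\int_0^\omega u\,g'(u)\diff u=\omega g(\omega)-\int_0^\omega g(u)\diff u$, and $\int_0^\omega u^2 g'(u)\diff u=\omega^2 g(\omega)-2\int_0^\omega u\,g(u)\diff u$. Substituting these into the displayed identity and abbreviating $G:=g(\omega)$, $I_1:=\int_0^\omega g(u)\diff u$, $I_2:=\int_0^\omega u\,g(u)\diff u$, the terms $\omega^2G^2$ cancel and we are left with $P(\phi,\omega)=(\omega^2 G-2I_2)G-(\omega G-I_1)^2=2G(\omega I_1-I_2)-I_1^2$. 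Since $\omega I_1-I_2=\int_0^\omega(\omega-u)g(u)\diff u$, and recalling $G=\Delta_\omega C_\circ(\phi)$ and $g(u)=\Delta_u C_\circ(\phi)$, this is exactly \eqref{eq:partsandparts}.

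There is no genuine obstacle in this argument; the only points deserving a word of care are the justification of the integration by parts in the merely Lipschitz setting — which is standard absolute-continuity theory — and the bookkeeping of signs when $\omega$ is negative, which is harmless since every step is an algebraic identity between oriented integrals. The computation is short once the translation and the symmetrization have been carried out.
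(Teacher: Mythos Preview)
Your proof is correct. Both you and the paper arrive at \eqref{eq:partsandparts} by integration by parts, but the routes differ in an interesting way. The paper integrates by parts directly inside the double integral (first in $s$, then in $t$), which produces several cross terms that must then be regrouped by adding and subtracting suitable quantities until the structure $2\Delta_\omega C_\circ(\phi)\int_0^\omega(\omega-t)\Delta_t C_\circ(\phi)\diff t - \big(\int_0^\omega\Delta_t C_\circ(\phi)\diff t\big)^2$ emerges. Your argument instead first expands $(u-v)^2=u^2-2uv+v^2$ and uses the symmetry of the square to factorize $P$ as a combination of products of \emph{single} integrals, after which three one-variable integrations by parts and a short algebraic simplification suffice. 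This factorization is a genuine shortcut: it avoids the bookkeeping of the double integral entirely and makes the cancellation of the $\omega^2 G^2$ terms transparent. The only technical points (Lipschitz regularity of $C_\circ$ justifying the integration by parts, and the oriented-integral reading for $\omega<0$) are exactly the ones you flagged and are unproblematic.
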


\begin{proof}
Integrating by parts a first time we obtain
    \begin{equation}
    \begin{split}
         P(\phi, \omega)&= \frac 12\int_{0}^{ \omega} (t - \omega)^2 C_\circ'(\phi + t) C_\circ(\phi + \omega) \diff t - \frac 12\int_{0}^{ \omega} t^2 C_\circ'(\phi + t) C_\circ(\phi) \diff t \\
        &\quad +\int_{0}^{\omega}  \int_{0}^{ \omega} (t - s) C_\circ'(\phi +t) C_\circ(\phi +s) \diff s \diff t.
    \end{split}
    \end{equation}
With another integration by parts and after standard computations, we have that:
    \begin{equation}
    \begin{split}
        P(\phi, \omega)&= - \omega^2  C_\circ(\phi +\omega) C_\circ(\phi) + 2 \int_{0}^{ \omega} (\omega - t) C_\circ(\phi +\omega) C_\circ(\phi +t) \diff t \\
        &\quad + 2 \int_{0}^{ \omega} t \,  C_\circ(\phi) C_\circ(\phi +t) \diff t -\int_{0}^{\omega}  \int_{0}^{ \omega} C_\circ(\phi +t) C_\circ(\phi +s) \diff s \diff t.
    \end{split}
    \end{equation}
We add and remove the quantity $\omega^2C_\circ(\phi)^2+ 2 \omega C_\circ(\phi) \int_{0}^{ \omega}  C_\circ(\phi +t) \diff t$, obtaining:
\begin{equation}
    \begin{split}
        P(\phi, \omega) ={}& - \omega^2  C_\circ(\phi)^2 - \omega^2 C_\circ(\phi) [C_\circ(\phi +\omega)- C_\circ(\phi)]   + 2 \omega C_\circ(\phi) \int_{0}^{ \omega}  C_\circ(\phi +t) \diff t \\
        &\quad + 2 [C_\circ(\phi +\omega)- C_\circ(\phi)] \int_{0}^{ \omega} (\omega - t)  C_\circ(\phi +t) \diff t -  \bigg(\int_{0}^{ \omega} C_\circ(\phi +t) \diff t \bigg)^2\\
        ={}&[C_\circ(\phi +\omega)-C_\circ(\phi)] \bigg( \int_{0}^{ \omega} (\omega - t) C_\circ(\phi +t) \diff t  - \omega^2 C_\circ(\phi)  \bigg) \\
        &\quad -\bigg(\int_{0}^{ \omega} C_\circ(\phi +t) \diff t- \omega C_\circ(\phi) \bigg)^2
    \end{split}
\end{equation}
Once we have this expression, the thesis simply follows from the definition of $\Delta_\omega C_\circ(\phi)$.
\end{proof}

We are now ready to prove our first main result.

\begin{theorem}
    \label{thm:rigidityinsection}
    Any Heisenberg group $\hei$, equipped with a sub-Finsler norm $\normdot$ and the Lebesgue measure $\Leb^3$, satisfies $N_{\mathrm{curv}} \geq 5$. Furthermore, $N_{\mathrm{curv}} = 5$ if and only if $\normdot$ is induced by a scalar product.
\end{theorem}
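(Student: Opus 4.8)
The plan is to reformulate $\MCP(0,N)$ as the pointwise inequality \eqref{eq:equivalent_reducedJ2} for the reduced Jacobian $\J_R$ (Proposition~\ref{prop:phoenix}) and then to analyse the map $\omega\mapsto\J_R(\phi,\omega)/\omega^4$ near $\omega=0$, where it is governed by $C_\circ'$. One may assume $\normdot$ is $C^1$ and strongly convex; otherwise, by the results of \cite{borza2024measure} recalled in the Introduction no $\MCP(K,N)$ holds, so $N_{\mathrm{curv}}=+\infty$, while a norm induced by a scalar product is smooth and strongly convex, so the theorem holds trivially in that case. For $N_{\mathrm{curv}}\ge 5$: since $C_\circ$ is strictly increasing and Lipschitz it is absolutely continuous with $\int_I C_\circ'>0$ on every interval $I$, so fix a Lebesgue point $\phi_0$ of $C_\circ'$ with $c_0:=C_\circ'(\phi_0)>0$. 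Writing $\J_R=P+R$ as in \eqref{eq:JRaround0}, the substitution $t=\phi_0+\rho u$, $s=\phi_0+\rho v$ in $P$ together with $|R(\phi_0,\rho)|\le M\rho^6$ gives $\J_R(\phi_0,\rho)=\tfrac{\rho^4}{12}c_0^2+o(\rho^4)$ as $\rho\to0$. If $\MCP(0,N)$ held, Proposition~\ref{prop:phoenix} would give $|\J_R(\phi_0,\omega t)|\ge t^{N-1}|\J_R(\phi_0,\omega)|$; dividing by $|\J_R(\phi_0,\omega)|>0$ and letting $\omega\to0$ with $t\in(0,1)$ fixed forces $t^4\ge t^{N-1}$, i.e.\ $N\ge5$, so $N_{\mathrm{curv}}\ge5$.

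If $\normdot$ is induced by a scalar product, then by Proposition~\ref{prop:scalar} $(\hei,\di)$ is sub-Riemannian and, up to a linear automorphism of $\hei$ (an isometry rescaling $\Leb^3$ by a constant, hence preserving $\MCP(0,N)$), it is the standard sub-Riemannian Heisenberg group, for which $N_{\mathrm{curv}}=5$ by \cite{juillet2009}; with the previous step this gives the equality. (Equivalently, when $C_\circ'\equiv c$ one has $P(\phi,\omega)=\tfrac{c^2}{12}\omega^4$ exactly, and a short computation from \eqref{eq:reduced_Jac} gives $\J_R(\phi,\omega)=\tfrac{c^2}{12}\omega^4$, so \eqref{eq:equivalent_reducedJ2} holds with equality for $N=5$.)

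Conversely, assume $\normdot$ is not induced by a scalar product; by Proposition~\ref{prop:scalar}, $C_\circ$ is not affine, and Lemma~\ref{lemma:D2>H} supplies $H>0$. It suffices to find one pair $(\phi,\omega)$, with $\phi$ a Lebesgue point of $C_\circ'$, such that $\J_R(\phi,\omega)>\tfrac{\omega^4}{12}C_\circ'(\phi)^2=\lim_{t\to0}\J_R(\phi,\omega t)/t^4$, the last equality being the expansion of the first step at $\phi$: then $\J_R(\phi,\omega t)<t^4\J_R(\phi,\omega)$ for some $t\in(0,1)$, contradicting \eqref{eq:equivalent_reducedJ2} for $N=5$, so $\MCP(0,5)$ fails, and with $N_{\mathrm{curv}}\ge5$ this yields $N_{\mathrm{curv}}>5$ (were $N_{\mathrm{curv}}=5$, then $\MCP(0,6)$ would hold, so $\MCP(0,5)$ would hold by Remark~\ref{rmk:min=inf_curv_exp}, a contradiction). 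Since $|R(\phi,\omega)|\le M\omega^6$ by \eqref{eq:boundremainder}, it is enough to produce $(\phi,\omega)$ with $P(\phi,\omega)>\tfrac{\omega^4}{12}C_\circ'(\phi)^2+M\omega^6$. Here the finite-difference machinery enters: given $h>0$, apply Lemma~\ref{lemma:<=Deltat<=} (which builds on Lemmas~\ref{lemma:D2>H} and~\ref{lemma:Deltat<=}) and let its parameter $\epsilon\downarrow0$, obtaining $\delta\in(0,h]$, $A\ge a$, a density point $\phi$ of $S_{A+\epsilon}$ --- chosen also to be a Lebesgue point of $C_\circ'$, so $C_\circ'(\phi)\le A+\epsilon$ --- an $\omega\in(0,2\delta]$, and $B\ge H\delta/2$ with $At\le\Delta_t C_\circ(\phi)\le(A+B)t$ on $[0,\omega]$ and $\Delta_\omega C_\circ(\phi)=(A+B)\omega$. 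Inserting these into the identity
\begin{equation*}
P(\phi,\omega)=2\Delta_\omega C_\circ(\phi)\int_0^\omega(\omega-t)\Delta_t C_\circ(\phi)\diff t-\Big(\int_0^\omega\Delta_t C_\circ(\phi)\diff t\Big)^2
\end{equation*}
of Lemma~\ref{lem:integrationbypartsmanymanytimes}, and using that near $\phi$ one has $At\le\Delta_t C_\circ(\phi)\le(A+\epsilon)t+o(t)$ whereas the chord of $C_\circ$ over $[0,\omega]$ has the strictly larger slope $A+B\ge A+H\delta/2$, one extracts a bound $P(\phi,\omega)\ge\omega^4\big(\tfrac1{12}(A+\epsilon)^2+\eta\big)$ with surplus $\eta=\eta(\delta,A,B)>0$. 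Since $\omega\le2\delta$ we have $M\omega^6\le4M\delta^2\omega^4$, so choosing first $\delta$, then $\epsilon$, small makes $\eta$ dominate $4M\delta^2+\tfrac1{12}\big((A+\epsilon)^2-C_\circ'(\phi)^2\big)$, which gives the required strict inequality.

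The main obstacle is exactly this lower bound on $P(\phi,\omega)$: the identity of Lemma~\ref{lem:integrationbypartsmanymanytimes} exhibits $P(\phi,\omega)$ as a variance-type expression, so the product term and the squared term must be controlled jointly --- crude one-sided estimates cancel --- and the decisive input is that Lemmas~\ref{lemma:Deltat<=}--\ref{lemma:<=Deltat<=} single out a point where the discrete second difference of $C_\circ$ is $\ge H>0$ while its discrete first difference is essentially minimal, this being the non-smooth substitute for the Taylor argument of the Introduction. A further delicate point is the degenerate case $a=\essinf C_\circ'=0$ (which can occur for a $C^1$ strongly convex norm that fails to be $C^{1,1}$): there the leading term $\tfrac1{12}C_\circ'(\phi)^2\omega^4$ is negligible and one needs instead the genuinely quantitative estimate $P(\phi,\omega)>M\omega^6$, for which both $B\ge H\delta/2$ and the improved remainder bound \eqref{eq:boundremainder} are essential.
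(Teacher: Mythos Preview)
Your overall architecture matches the paper's: reduce to $C^1$ strongly convex norms, reformulate via Proposition~\ref{prop:phoenix}, use Proposition~\ref{prop:scalar} to translate ``not a scalar product'' into ``$C_\circ$ not affine'', invoke Lemmas~\ref{lemma:D2>H}--\ref{lemma:<=Deltat<=} to locate a good pair $(\phi,\omega)$, and compare the small-$t$ behaviour of $\J_R(\phi,\omega t)$ with $\J_R(\phi,\omega)$ through the $P+R$ decomposition of Proposition~\ref{prop:formulaJRdJRint} and the identity of Lemma~\ref{lem:integrationbypartsmanymanytimes}. Your Lebesgue-point argument for $\lim_{t\to0}\J_R(\phi,\omega t)/t^4=\tfrac{\omega^4}{12}C_\circ'(\phi)^2$ is a clean variant of the paper's Step~1 (density-point) upper bound $P(\phi,r\omega)\le\tfrac1{12}(A+2\epsilon)^2r^4\omega^4$. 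The lower bound you assert, $P(\phi,\omega)\ge\omega^4\big(\tfrac1{12}(A+\epsilon)^2+\eta\big)$, is exactly the content of the paper's Step~2, which proves $P(\phi,\omega)\ge\tfrac1{12}A^2\omega^4+\tfrac18 AB\omega^4$ by splitting the variance-type expression into four pieces and estimating each; you correctly identify this as the hard step but do not carry it out.

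There is, however, a genuine gap in the degenerate case $a=\essinf C_\circ'=0$. Your proposed route --- obtain $P(\phi,\omega)>M\omega^6$ from $B\ge H\delta/2$ and the remainder bound \eqref{eq:boundremainder} --- does not follow from the available lemmas. When $A$ is small (and nothing prevents $A=0$ in Lemma~\ref{lemma:<=Deltat<=}), the Step~2 bound $\tfrac1{12}A^2\omega^4+\tfrac18 AB\omega^4$ degenerates, and the residual nonnegative term $\tfrac1{12}B^2(\omega-s)^3(3s+\omega)$ carries no control on $s\in[0,\omega]$, so no inequality of the form $P(\phi,\omega)\gtrsim\delta^k\omega^4$ is available. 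The paper does \emph{not} pursue this line: in Step~4 it abandons the fine estimates on $P$ altogether, fixes $\omega=\pi_{\Omega^\circ}$ (not small), and uses the uniform positivity $u:=\min_{\theta}\J_R(\theta,\pi_{\Omega^\circ})>0$ (continuity and non-vanishing of $\J_R$ from \cite[Prop.~3.16]{borza2024measure}) together with $\limsup_{r\to0}\J_R(\phi,r\pi_{\Omega^\circ})/r^4\le 3\epsilon^2\pi_{\Omega^\circ}^4$ for $\phi\in D(S_\epsilon)$ to force $\J_R(\phi,r\pi_{\Omega^\circ})<r^4\J_R(\phi,\pi_{\Omega^\circ})$ for small $r$. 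Your plan should incorporate this separate argument for $a=0$.
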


\begin{proof}
  If $\normdot$ is induced by a scalar product, then there is an isometry between $(\hei, \di)$ and the standard sub-Riemannian Heisenberg group, and thus $(\hei, \di,\Leb^3)$ has $N_{\text{curv}}= 5$ (see \cite{juillet2009}). 
  
  If $\normdot$ is not induced by a scalar product and it is neither strongly convex nor $C^1$, then, according to \cite[Theorem 1.1]{borza2024measure}, it does not satisfy $\mathsf{MCP}(K, N)$ for any $K \in \R$ and any $N \in \ointerval{1}{+\infty}$ and thus $N_{\mathrm{curv}} = +\infty$. For the rest of the proof, we assume that $\normdot$ is $C^1$ and strongly convex and it is not induced by a scalar product, i.e. $C_\circ$ is not an affine map, by Proposition \ref{prop:scalar}. We are going to prove that $N_{\mathrm{curv}} > 5$.
    
    We place ourselves within the conclusions of Lemma \ref{lemma:<=Deltat<=}, using the same notations except for $S_{A + \epsilon}$ that we denote by $S$ for simplicity. Recall the definition of $a$ in \eqref{eq:assumptiona>0} and consider $r \in \interval{0}{1}$.
    
    
    \paragraph{Step 1: Finding an upper bound on $P(\phi, r \omega)$.\\} 
    
    Let $\phi\in D(S_{A+\varepsilon})$ the angle identified by Lemma \ref{lemma:<=Deltat<=}.
    By writing $\interval{\varphi}{\varphi+r \omega}$ as the disjoint union of $\interval{\varphi}{\varphi+r \omega} \cap S$ (on which we have $C_\circ'(\theta) \leq A + \epsilon$) and $\interval{\varphi}{\varphi+r \omega} \setminus S$ (on which we use the trivial estimate $C_\circ'(\theta) \leq \Lip(C_\circ)$), using the definition of $P$ in \eqref{eq:JRaround0}, we obtain
   
    \begin{align*}
        P(\phi, r\omega) 
        \leq \frac{1}{2} \Big({}& (A + \epsilon)^2 
        \int_{\interval{\varphi}{\varphi+r \omega}\cap S}  \int_{\interval{\varphi}{\varphi+r \omega}\cap S} (t - s)^2 \diff s \diff t \\
        & + 2 (A + \epsilon) \Lip(C_\circ) \int_{\interval{\varphi}{\varphi+r \omega}\setminus S}  \int_{\interval{\varphi}{\varphi+r \omega}\cap S} (t - s)^2 \diff s \diff t \\
        & + \Lip(C_\circ)^2\int_{\interval{\varphi}{\varphi+r \omega}\setminus S} \int_{\interval{\varphi}{\varphi+r \omega}\setminus S} (t - s)^2 \diff s \diff t \Big).
    \end{align*}
    We then use the trivial inclusion $\interval{\varphi}{\varphi+r \omega} \cap S \subseteq \interval{\varphi}{\varphi+r \omega}$ to get
    \begin{align*}
        P(\phi, r\omega) 
        \leq \frac{1}{2} \Big({}
        & (A + \epsilon)^2 
        \int_{\interval{\varphi}{\varphi+r \omega}}  \int_{\interval{\varphi}{\varphi+r \omega}} (t - s)^2 \diff s \diff t \\
        & + 2 (A + \epsilon) \Lip(C_\circ) \int_{\interval{\varphi}{\varphi+r \omega}\setminus S}  \int_{\interval{\varphi}{\varphi+r \omega}} (t - s)^2 \diff s \diff t \\
        & + \Lip(C_\circ)^2 \int_{\interval{\varphi}{\varphi+r \omega}\setminus S} \int_{\interval{\varphi}{\varphi+r \omega}\setminus S} (t - s)^2 \diff s \diff t \Big) \\
        \leq \frac{1}{2} \Big({}& \frac{1}{6} (A + \epsilon)^2 r^4 \omega^4 + \frac{4}{3} (A + \epsilon) \Lip(C_\circ) r^3 \omega^3 \Leb^1(\interval{\phi}{\phi + r \omega}\setminus S) \\
        &+  \Lip(C_\circ)^2 r^2 \omega^2 \Leb^1(\interval{\phi}{\phi + r \omega}\setminus S)^2 \Big).
    \end{align*}
    Since $\phi$ is a density point of $S$, we know that
    \[
    \frac{\Leb^1(\interval{\phi}{\phi + r \omega}\setminus S)}{r \omega} \to 0, \qquad\text{as } r \to 0^+.
    \]
    In particular, we deduce that for every $r > 0$ small enough, it holds
    \begin{equation}
        \label{eq:Pupperbound}
        P(\phi, r \omega) \leq \frac{1}{12} (A + 2 \epsilon)^2 r^4 \omega^4.
    \end{equation}
    

    \paragraph{Step 2: Finding a lower bound on $P(\phi, \omega)$.\\} Considering the function $f_\phi(t) := \Delta_t C_\circ(\phi) - A t$, from \eqref{eq:<=Deltat<=}, we deduce that
    \begin{equation}
        \label{eq:<=Gt<=}
        0 \leq f_\phi(t) \leq B t \ \ \text{ for all } t \in \interval{0}{\omega}, \text{ and } \ \ f_\phi(\omega) = B \omega.
    \end{equation}
    To deduce the lower bound on $P$, we use the expression \eqref{eq:partsandparts} obtained in Lemma \ref{lem:integrationbypartsmanymanytimes}. Then, thanks to our choice of angles, we have
    \begin{align}
        \label{eq:PwithG}
        \begin{split}
            P(\phi, \omega) ={}& 2 \omega (A + B) \left(\int_0^\omega (\omega - t) f_\phi(t) \diff t + \frac{1}{6} A \omega^3 \right) - \left( \frac{1}{2} A \omega^2 + \int_0^\omega f_\phi(t) \right)^2 \\
            ={}& \frac{1}{12} A^2 \omega^4 + \frac{1}{3} A B \omega^4 + A \omega \left( 2 \int_0^{\omega} (\omega - t) f_\phi(t) \diff t - \omega \int_0^{\omega}  f_\phi(t) \diff t \right) \\
            & + \left( 2 B \omega \int_0^{\omega} (\omega - t) f_\phi(t) \diff t - \left(\int_0^{\omega}  f_\phi(t) \diff t\right)^2\right).
        \end{split}
    \end{align}
    We now estimate the two quantities within the parentheses. Firstly, we observe that 
    \begin{equation*}
        2 \int_0^{\omega} (\omega - t) f_\phi(t) \diff t - \omega \int_0^{\omega}  f_\phi(t) \diff t = \int_0^{\omega} (\omega -2 t) f_\phi(t) \diff t \geq \int_{\omega/2}^{\omega} (\omega - 2 t) Bt \diff t = -\frac{5}{24} B \omega^3.
    \end{equation*}
    Secondly, we claim that the third line of \eqref{eq:PwithG} is non-negative. To prove this, we note that \eqref{eq:<=Gt<=} implies the existence of a unique $s \in \interval{0}{\omega}$ satisfying $\int_{s}^{\omega} B t \diff t = \int_{0}^\omega f_\phi(t) \diff t$. Then, we have that
    \[
    \int_0^\omega (f_{\phi}(t) - \chi_{\interval{s}{\omega}} Bt) \diff t = 0,\quad \text{and thus}\quad \int_0^s f_\phi(t) \diff t = \int_s^\omega (Bt - f_{\phi(t)}) \diff t.
    \]
    This yields that
    \begin{equation}
        \int_0^s (\omega - t) f_\phi(t) \diff t \geq \int_0^s (\omega - s) f_\phi(t) \diff t = \int_s^\omega (\omega - s) (B t - f_\phi(t)) \diff t \geq \int_s^\omega (\omega - t) (B t - f_\phi(t)) \diff t,
    \end{equation}
    and therefore $\int_0^{\omega} (\omega - t) f_\phi(t) \diff t \geq \int_s^{\omega} (\omega - t) B t \diff t$. Hence, we can estimate the third line of \eqref{eq:PwithG} as follows:
    \begin{multline*}
        2 B \omega \int_0^{\omega} (\omega - t) f_\phi(t) \diff t - \left(\int_0^{\omega}  f_\phi(t) \diff t\right)^2\\ 
        \geq 2 B \omega \int_s^{\omega} (\omega - t) B t \diff t - \left(\int_s^{\omega}  B t \diff t\right)^2 = \frac{1}{12} B^2 (\omega - s)^3 (3 s + w) \geq 0. 
    \end{multline*}
    Finally, we conclude that
    \begin{equation}
        \label{eq:Plowerbound}
        P(\phi, \omega) \geq \frac{1}{12} A^2 \omega^4 + \frac{1}{8} A B \omega^4.
    \end{equation}


    \paragraph{Step 3: Proving $N_{\mathrm{curv}} > 5$ when $a > 0$.\\} Using the estimates \eqref{eq:Pupperbound} and \eqref{eq:Plowerbound}, keeping in mind \eqref{eq:boundremainder}, we deduce that for $r$ sufficiently small, we have
    \begin{align*}
        \frac{\mathcal{J}_R(\phi, r \omega)}{\mathcal{J}_R(\phi, \omega)} \leq{}& \frac{\frac{1}{12} (A + 2 \epsilon)^2 r^4 \omega^4 + M r^6 \omega^6}{\frac{1}{12} A^2 \omega^4 + \frac{1}{8} A B \omega^4 - M \omega^6} = r^4 \frac{\frac{1}{12} (A + 2 \epsilon)^2 + M r^2 \omega^2}{\frac{1}{12} A^2 + \frac{1}{8} A B  - M \omega^2}.
    \end{align*}
    Moreover, we observe that 
    \begin{equation}
    \label{eq:explimit}
        \lim_{r\to 0} \frac{\frac{1}{12} (A + 2 \epsilon)^2 + M r^2 \omega^2}{\frac{1}{12} A^2 + \frac{1}{8} A B  - M \omega^2} = \frac{(A+2 \varepsilon)^2}{A^2 + \frac 32 AB - 12 M \omega^2} \leq \frac{(A+2 \varepsilon)^2}{A^2 + \frac 34 H A \delta - 48 M  \delta^2},
    \end{equation}
    where the inequality follows from the facts that $B \geq H \delta/2$ and $\omega\leq 2 \delta$. We finally choose $h > 0$ and $\epsilon > 0$ in Lemma \ref{lemma:<=Deltat<=} so that the expression in \eqref{eq:explimit} is smaller than 1. For example, we can take
    \[
    h \leq \frac{a H}{192 M}\qquad \text{and}\qquad \epsilon \leq \frac{1}{2} \left( \sqrt{A^2 + \frac 14 H a \delta} - A \right).
    \]
    Note that, since $a > 0$, we may pick $h, \epsilon > 0$. With these choices, recalling that $A\geq a$ and $\delta\leq h$, we get
    \begin{equation}
        \label{eq:finalkick}
        A^2 + \frac 34 H A \delta - 48 M  \delta^2 \geq A^2 + \frac 34 H a \delta - 48 M  \delta^2 \geq A^2 + \frac 12 H a \delta > (A+2 \varepsilon)^2.
    \end{equation}

   Putting everything together, we deduce that there exists $r\in \interval{0}{1}$ (close to $0$) such that 
    \begin{equation}
        \frac{\J_R (\phi, r \omega)}{\J_R (\phi, \omega)} < r^4.
    \end{equation}
    According to Proposition \ref{prop:phoenix}, this is sufficient to prove that the metric measure space $(\hei, \di, \Leb^3)$ does not satisfy $\mathsf{MCP}(0,5)$, and then $N_{\text{curv}}> 5$, thanks to Remark \ref{lem:lsc_curv_exp}.


    \paragraph{Step 4: Proving $N_{\mathrm{curv}} > 5$ when $a = 0$.\\} In this case, we can provide a similar and simpler argument. Since $a = 0$, the set $S_\epsilon$ is non-empty for all $\epsilon > 0$. Therefore, repeating the argument from Step 1 with $A = 0$, we obtain that if $\phi \in D(S_\epsilon)$ and $\omega > 0$, then for all $r \in \interval{0}{1}$ sufficiently small
    \[
    P(\phi, r \omega) \leq 3 \epsilon^2 r^4 \omega^4. 
    \]
     Recalling Proposition \ref{prop:formulaJRdJRint}, this yields that
    \begin{equation}
        \label{eq:failMCPa=0}
        \lim_{r \to 0} \frac{\J_R(\phi, r \omega)}{r^4} \leq \lim_{r \to 0} \frac{1}{r^4} \left( 3 \epsilon^2 r^4 \omega^4 + M r^2 \omega^5 \right) = 3 \epsilon^2 \omega^4.
    \end{equation}
    As we are focusing on $C^1$ and strongly convex reference norms, the reduced Jacobian $(\phi,\omega) \mapsto \J_R (\phi,\omega)$ is continuous and nonnegative when $\omega$ is not a multiple of $2\pi_{\Omega^\circ}$, see Remark 3.12 and Proposition 3.16 in \cite{borza2024measure}. In particular, by periodicity, we have that 
    \begin{equation}
        u:=\min_{\theta \in [0,2\pi_{\Omega^\circ})} \J_R (\theta,\pi_{\Omega^\circ}) >0.
    \end{equation}
    Finally, we choose $\epsilon < \sqrt{u/(3 \pi_{\Omega^\circ}^4)}$ and $\omega = \pi_{\Omega^\circ}$ and we get that for $r\in[0,1]$ sufficiently small it holds that
    \begin{equation}
        \J_R (\phi,  r \pi_{\Omega^\circ}) < r^4 u\leq r^4 \J_R (\phi,  \pi_{\Omega^\circ}).
    \end{equation}
    As in the fist case, this is sufficient to conclude.
\end{proof}

\begin{remark}
    We would expect that Theorem \ref{thm:rigidityinsection} could be extended to higher-dimensional sub-Finsler Heisenberg groups $\hei_n$ with rigidity at $N_{\mathrm{curv}} = 2 n + 3$. However, this seems difficult with the techniques developed here. The main reason is that, in general, we do not have an explicit expression of \sF geodesics for higher-dimensional Heisenberg groups. The expression of geodesics is available solely for a specific class of norms, see \cite[Main Assumption]{lok2}.
\end{remark}

\section{Continuity of the curvature exponent}
\label{sec:continuity}

In this section, we prove that the curvature exponent of \sF Heisenberg groups is continuous with respect to a suitable convergence of the underlying norms. As a consequence, we deduce that for every $N^\star\geq5$ there exists a \sF Heisenberg group having $N^\star$ as curvature exponent. See Section \ref{sec:convex_trigtrig} for the definitions of strongly convex and $C^k$ norms.

\begin{definition}[$C^k$-strong convergence of norms]\label{def:strong_convergence}
    Let $k\geq 2$ and $\{\normdot_\varepsilon\}_{\varepsilon > 0}$ be a family of $C^k$ and strongly convex norms on $\R^2$. We say that $\{\normdot_\varepsilon\}_{\varepsilon> 0 }$ converges $C^k$-strongly to a $C^k$ and strongly convex norm $\normdot_{0}$ on $\R^2$ as $\varepsilon \to 0$ if $\normdot^2_\varepsilon\to\normdot^2_{0}$ in $C^k_{loc}(\R^2\setminus\{0\})$ as $\varepsilon \to 0$.
\end{definition}

\begin{remark}
\label{rmk:dual_is_bello}
     If the family $\{\normdot_\varepsilon\}_{\varepsilon > 0}$ of $C^k$ and strongly convex norms on $\R^2$ converges $C^{k}$-strongly to a $C^k$ and strongly convex norm $\normdot_{0}$ as $\varepsilon \to 0$, then $(\normdot_{\varepsilon})_*^2\to(\normdot_0)_*^2$ in $C^k_{loc}(\R^2\setminus\{0\})$ as $\varepsilon \to 0$. Indeed, for a given $C^k$ and strongly convex norm $\normdot$, according to \cite[Lem.\ 2.9]{magnabosco2023failure}, it holds that 
     \begin{equation}
         v^*=\norm{v} d_v \normdot = \frac12 d_v \normdot^2:= N(v), \qquad\forall\, v\in \R^2\setminus\{0\}, 
     \end{equation}
     where $v^*$ denotes the dual element of the vector $v$. Thus, defining $N^*(\lambda):=\frac12d_{\lambda} \normdot_*^2$, as $v^{**}=v$, we can deduce the identities
     \begin{equation}
     \label{eq:easy_id}
         N\circ N^*=N^*\circ N=\id_{\R^2\setminus\{0\}}.
     \end{equation}
     If $\normdot_\varepsilon\to\normdot_0$ $C^k$-strongly, then $N_\varepsilon$ converges to $N_0$ in the $C^{k-1}$-topology $\varepsilon \to 0$. From \eqref{eq:easy_id}, we see that also $N_\varepsilon^*$ converges to $N_0^*$ in the $C^{k-1}$-topology as $\varepsilon \to 0$, concluding the proof of the claim.
\end{remark}

\begin{lemma}
\label{lem:regularity_P}
    Let $k\geq 2$ and $\normdot$ be a $C^k$ and strongly convex norm. The map $P:\R/ 2 \pi_\Omega \mathbb Z \to  \partial \Omega \subset \R^2$ that associates to the angle $\theta$ the vector $P_\theta$, is a $C^k$-diffeomorphism.
\end{lemma}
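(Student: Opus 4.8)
\textbf{Proof plan for Lemma \ref{lem:regularity_P}.}

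The plan is to express $P$ explicitly in terms of quantities whose regularity is already understood, namely the dual norm $\normdot_*$ and its gradient. Recall from \eqref{eq:Ccirctonorm} that $C_\circ = P^{-1}\circ \diff\normdot_* \circ Q$, where $Q\colon \R/2\pi_{\Omega^\circ}\mathbb Z \to \partial\Omega^\circ$ sends $\phi$ to $Q_\phi = (\cosomp(\phi),\sinomp(\phi))$. The first step is therefore to understand $Q$ (equivalently $P$ for the dual norm): I would show that $Q$ is a $C^{k}$-diffeomorphism onto $\partial\Omega^\circ$ by an elementary ODE/area argument. By Proposition \ref{prop:difftrig}, $Q$ solves $\dot Q_\phi = (-\sinom(C_\circ(\phi)), \cosom(C_\circ(\phi)))$, which is a nonvanishing vector field tangent to $\partial\Omega^\circ$; parametrizing $\partial\Omega^\circ$ by arclength (a $C^{k}$ parametrization since $\normdot_*$ is $C^k$ and strongly convex, so $\partial\Omega^\circ$ is a $C^k$ strongly convex curve) and observing that $\phi$ is twice the signed area swept, the change of parameter between arclength and $\phi$ is $C^{k-1}$ with nonvanishing, hence $C^{k-1}$, derivative — but actually one gains a derivative because the area is the integral of a $C^{k-1}$ quantity, so the area function is $C^{k}$ and its inverse is $C^k$ by the inverse function theorem. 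Thus $Q$, and by the same argument $P$, is a $C^k$-diffeomorphism; however, this already proves the lemma directly for $P$, so the cleanest route is to run this argument for $P$ itself.

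Concretely: let $\gamma\colon \R/L\mathbb Z \to \partial\Omega$ be the unit-speed (with respect to the Euclidean metric) parametrization of $\partial\Omega$, where $L$ is the Euclidean length of $\partial\Omega$. Since $\normdot$ is $C^k$ and strongly convex, $f_\Omega = \tfrac12\normdot^2$ is $C^k$ and strongly convex on $\R^2\setminus\{0\}$, so $\partial\Omega = \{f_\Omega = \tfrac12\}$ is a $C^k$ embedded curve with nonvanishing curvature, and $\gamma$ is $C^k$. Define $\alpha\colon \R/L\mathbb Z \to \R/2\pi_\Omega\mathbb Z$ by letting $\alpha(\ell)$ be twice the area of the sector of $\Omega$ between $Ox$ and $O\gamma(\ell)$; by the definition of $\sinom,\cosom$ we have $P_{\alpha(\ell)} = \gamma(\ell)$, i.e.\ $P = \gamma\circ\alpha^{-1}$. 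The function $\alpha$ is given by the integral $\alpha(\ell) = \int_0^\ell \det(\gamma(u),\gamma'(u))\,\diff u$ (up to the choice of basepoint and orientation), where the integrand is $C^{k-1}$ and strictly positive (strict positivity because $\Omega$ is strictly convex and $0$ is interior, so $\gamma$ never points radially), hence $\alpha$ is $C^k$ with $\alpha' > 0$; by the inverse function theorem $\alpha^{-1}$ is a $C^k$-diffeomorphism, and therefore $P = \gamma\circ\alpha^{-1}$ is a $C^k$-diffeomorphism onto $\partial\Omega$.

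The main obstacle — really the only subtle point — is the bookkeeping of \emph{which} derivative count survives each operation: $\gamma$ is $C^k$, so $\gamma'$ is only $C^{k-1}$, the area integrand $\det(\gamma,\gamma')$ is $C^{k-1}$, and one must use the fact that integration raises regularity by one to recover that $\alpha$ is $C^k$ (not merely $C^{k-1}$); only then does the inverse function theorem give $\alpha^{-1}\in C^k$ and hence $P\in C^k$. One should also be slightly careful that "$C^k$-diffeomorphism" is meant in the sense of a diffeomorphism of the $1$-manifold $\R/2\pi_\Omega\mathbb Z$ onto the $1$-manifold $\partial\Omega\subset\R^2$ (i.e.\ $P$ is a $C^k$ immersion, injective, with $C^k$ inverse), which is exactly what the composition $\gamma\circ\alpha^{-1}$ delivers since $\gamma$ is a $C^k$ embedding and $\alpha^{-1}$ a $C^k$-diffeomorphism of circles. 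Everything else is routine.
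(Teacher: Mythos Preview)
The proposal is correct and follows essentially the same approach as the paper: parametrize $\partial\Omega$ by a constant-speed $C^k$ curve $\gamma$, express the generalized angle as the area integral $\alpha(\ell)=\int_0^\ell \det(\gamma,\gamma')$, observe that this integral is $C^k$ with nonvanishing derivative, and conclude $P=\gamma\circ\alpha^{-1}$ is a $C^k$-diffeomorphism. Your justification that $\alpha'=\det(\gamma,\gamma')\neq 0$ (the tangent line to $\partial\Omega$ never passes through the interior point $0$) is in fact cleaner than the paper's, which contains a small slip in the computation leading to the same conclusion.
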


\begin{proof}
    Recall that the map $P$ is bi-Lipschitz, cf.\ \cite[Lem.\ 2.8]{borza2024measure}. Moreover, if $\normdot$ is of class $C^k$, then $\partial\Omega=\{\norm{v}=1\}$ is a $C^k$ curve, as $1$ is a regular value of $f(v):=\norm{v}^2$. Let $\gamma:[0,1]\to \partial\Omega$ be a $C^k$ parametrization of $\partial\Omega$, with $\gamma(0)=P(0)$ and constant (Euclidean) speed. Then, there exists a $C^k$ function $\theta:[0,1]\to [0,2\pi_\Omega]$ such that
    \begin{equation}
        \gamma(t)=P(\theta(t)),\qquad\forall\, t\in [0,1],
    \end{equation}
    where, by definition, $\theta(0)=0$ and $\theta(1)=2\pi_\Omega$. Indeed, using Stokes' theorem, we see that the function $\theta(\cdot)$ satisfies
    \begin{equation}
    \label{eq:def_theta}
        \theta(t)= \int_0^t (\gamma_1(s)\dot\gamma_2(s)-\gamma_2(s)\dot\gamma_1(s))\de s,
    \end{equation}
    and thus is $C^k$. Now, we compute 
    \begin{equation}
        \dot\theta(t)=\gamma_1(t)\dot\gamma_2(t)-\gamma_2(t)\dot\gamma_1(t)= \gamma(t)\cdot\dot\gamma(t)^\perp,
    \end{equation}
    where $\cdot$ denotes here the Euclidean scalar product and $v^\perp$ is the oriented Euclidean orthogonal complement of $v$. Since $\dot\gamma(t)\neq 0$ for every $t\in [0,1]$, we deduce that also $\dot\theta(t)\neq 0$ for every $t\in [0,1]$. Indeed, we have
    \begin{equation}
        0=\frac12\frac{\de}{\de t}|\dot\gamma(t)|^2= \gamma(t)\cdot\dot\gamma(t),\qquad\forall\,t\in [0,1], 
    \end{equation}
    therefore $\dot\theta(t)= 0$ would imply that $\dot\gamma(t)$ is parallel and orthogonal to $\gamma(t)$, meaning that $\dot\gamma(t)=0$ and thus giving a contradiction. Therefore, $\theta$ is a $C^k$-diffeomorphism and thus $P=\gamma\circ\theta^{-1}$ is $C^k$-diffeomorphism as well.   
\end{proof}

\begin{lemma}
\label{lem:curvescurvescurves}
    Let $f_\varepsilon,f:\R^2\to\R$ be proper $C^k$ functions such that 
    \begin{equation*}
        f_\varepsilon\to f,\qquad \text{in }C^k_{loc}(\R^2,\R). 
    \end{equation*}
    Assume that $1$ is a regular value for $f_\varepsilon,f$ and let $\gamma^\varepsilon,\gamma:[0,1]\to \R^2$ be constant-speed parametrization of $\{f_\varepsilon=1\}$ and $\{f=1\}$, respectively, such that $\gamma^\varepsilon(0) \to\gamma(0)$ and $\gamma^\varepsilon(1) \to\gamma(1)$. Then, 
    \begin{equation*}
        \gamma^\varepsilon\to \gamma\qquad \text{in }C^k([0,1]).
    \end{equation*}
\end{lemma}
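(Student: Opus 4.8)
\emph{Strategy and set-up.} The plan is to realise both $\gamma$ and $\gamma^\varepsilon$ as time-rescaled integral curves of the unit tangent field of the corresponding level set, and then to invoke the continuous dependence of ODE solutions on the vector field and the initial datum. Write $\Sigma:=\{f=1\}$ and $\Sigma_\varepsilon:=\{f_\varepsilon=1\}$. Since $1$ is a regular value of $f$ and $f$ is proper, $\Sigma$ is a compact $C^k$ embedded $1$-submanifold of $\R^2$, and it is connected (it carries a continuous surjection from $[0,1]$), hence a single $C^k$ Jordan curve. Fix a compact neighbourhood $K$ of $\Sigma$ on which $\nabla f$ does not vanish; then $c_0:=\min_{\partial K}|f-1|>0$, as $\partial K\cap\Sigma=\emptyset$. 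The orientation of the constant-speed parametrizations is understood to be fixed once and for all, so that the hypotheses $\gamma^\varepsilon(0)\to\gamma(0)$ and $\gamma^\varepsilon(1)\to\gamma(1)$ pin down the limiting basepoint.

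\emph{Step 1 (confining $\Sigma_\varepsilon$).} First I would show that $\Sigma_\varepsilon\subseteq K$ for all small $\varepsilon$. Since $f_\varepsilon\to f$ in $C^1_{loc}$, for $\varepsilon$ small we have $|f_\varepsilon-1|\ge c_0/2$ on $\partial K$ (hence $\Sigma_\varepsilon\cap\partial K=\emptyset$) and $\nabla f_\varepsilon\ne0$ on $K$. Then $\Sigma_\varepsilon\cap K$ is closed in $\Sigma_\varepsilon$ (because $K$ is closed) and open in $\Sigma_\varepsilon$ (because it equals $\Sigma_\varepsilon\cap(K\setminus\partial K)$, as $\Sigma_\varepsilon\cap\partial K=\emptyset$); it is nonempty once $\gamma^\varepsilon(0)\in K$, which holds for $\varepsilon$ small since $\gamma^\varepsilon(0)\to\gamma(0)\in\Sigma$. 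Connectedness of $\Sigma_\varepsilon$ then forces $\Sigma_\varepsilon\subseteq K$. Since in addition $\{q\in K:|f(q)-1|\ge\delta\}$ is compact and disjoint from $\Sigma$ for each $\delta>0$, uniform convergence $f_\varepsilon\to f$ on $K$ yields $\Sigma_\varepsilon\to\Sigma$ in the Hausdorff distance.

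\emph{Steps 2--3 (convergence of level sets and the flow).} By the implicit function theorem with $\varepsilon$ as a parameter, near each $p\in\Sigma$ there are $C^k$ coordinates in which $\Sigma$ and, for $\varepsilon$ small, $\Sigma_\varepsilon$ are graphs of $C^k$ functions that converge in $C^k$ (using $f_\varepsilon\to f$ in $C^k_{loc}$ and $\nabla f\ne0$). Covering $\Sigma$ by finitely many such charts --- which contain $\Sigma_\varepsilon$ for $\varepsilon$ small by the Hausdorff convergence --- we get that, for small $\varepsilon$, $\Sigma_\varepsilon$ is a single closed $C^k$ curve whose Euclidean length $L_\varepsilon$ converges to the length $L$ of $\Sigma$. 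Let $\rho$ be the rotation of $\R^2$ by $\pi/2$ and put $T:=\rho(\nabla f)/|\nabla f|$ and $T_\varepsilon:=\rho(\nabla f_\varepsilon)/|\nabla f_\varepsilon|$ on $K$; these are $C^{k-1}$ unit vector fields tangent to the level sets of $f$, resp. $f_\varepsilon$, with $T_\varepsilon\to T$ in $C^{k-1}(K)$ since $f_\varepsilon\to f$ in $C^k(K)$ and $|\nabla f|$ is bounded below on $K$. Now let $\tilde\gamma(s):=\gamma(s/L)$ and $\tilde\gamma^\varepsilon(s):=\gamma^\varepsilon(s/L_\varepsilon)$ be the arc-length reparametrizations, extended for all $s$ as the (periodic) flow lines. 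Their velocities are unit and tangent to $\Sigma$, resp. $\Sigma_\varepsilon$, hence --- by continuity and the fixed orientation, which is consistent in the limit because $\dot{\tilde\gamma}^\varepsilon(0)=T_\varepsilon(\gamma^\varepsilon(0))\to T(\gamma(0))=\dot{\tilde\gamma}(0)$ --- they solve
\[
\dot{\tilde\gamma}^\varepsilon=T_\varepsilon(\tilde\gamma^\varepsilon),\ \ \tilde\gamma^\varepsilon(0)=\gamma^\varepsilon(0),\qquad\text{and}\qquad \dot{\tilde\gamma}=T(\tilde\gamma),\ \ \tilde\gamma(0)=\gamma(0),
\]
with trajectories remaining in the compact set $K$ for all times. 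By the standard theorem on $C^k$-dependence of solutions of an ODE on the right-hand side and the initial datum (applicable since $T_\varepsilon\to T$ in $C^{k-1}(K)$, $\gamma^\varepsilon(0)\to\gamma(0)$, and the trajectories stay in $K$), one obtains $\tilde\gamma^\varepsilon\to\tilde\gamma$ in $C^k([0,\Lambda])$ for every fixed $\Lambda>0$. Taking $\Lambda>\sup_\varepsilon L_\varepsilon$ (finite, since $L_\varepsilon\to L$) and using $L_\varepsilon\to L$, the chain rule gives $\gamma^\varepsilon(t)=\tilde\gamma^\varepsilon(L_\varepsilon t)\to\tilde\gamma(Lt)=\gamma(t)$ in $C^k([0,1])$, which is the assertion.

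\emph{The main obstacle.} The genuinely delicate point is Step 1: local $C^k$-convergence of $f_\varepsilon$ gives no information near infinity, so a priori $\Sigma_\varepsilon$ could leave every fixed compact set or develop far-away components, making the ODE comparison of Steps 2--3 vacuous. What rescues the argument is that the mere existence of the parametrization $\gamma^\varepsilon$ makes $\Sigma_\varepsilon$ connected, so that --- unable to cross the region $\{f_\varepsilon\ne1\}$ surrounding $\partial K$ yet meeting $K$ near $\gamma^\varepsilon(0)$ --- it stays trapped in $K$; once this is in place, the rest is a routine application of the implicit function theorem and of continuous dependence for ODEs. (Note that $\gamma^\varepsilon(1)\to\gamma(1)$ is then automatic, since $\Sigma$ and the $\Sigma_\varepsilon$ are closed curves.)
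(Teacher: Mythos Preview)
Your proof is correct and follows a genuinely different route from the paper's. The paper works locally: near each point of $\Sigma$ it writes the level sets as graphs $(t,g_\varepsilon(t))$ via the implicit function theorem, shows $g_\varepsilon\to g_0$ in $C^k$ by iterating the identity $g_\varepsilon'=-\partial_x f_\varepsilon/\partial_y f_\varepsilon$, and then patches these local charts together through arc-length reparametrizations $\phi_\varepsilon$. You instead work globally: you recognize the arc-length parametrizations as integral curves of the rotated unit gradient $T_\varepsilon=\rho(\nabla f_\varepsilon)/|\nabla f_\varepsilon|$, reduce the question to continuous dependence of ODE solutions on the vector field and initial datum, and bootstrap $T_\varepsilon\to T$ in $C^{k-1}(K)$ into $\tilde\gamma^\varepsilon\to\tilde\gamma$ in $C^k$. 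This buys you a cleaner argument with no chart-patching, at the price of invoking ODE dependence theory; the paper's hands-on graph computation is more elementary but heavier on bookkeeping. Your Step~1 is also more careful than the paper, which opens with the assertion ``by assumptions $\gamma^\varepsilon(s)\to\gamma(s)$ for every $s$'' without justification; your trapping argument via the clopen decomposition of $\Sigma_\varepsilon$ relative to $K$ is precisely what makes the localization to a fixed compact neighborhood legitimate. One small remark: when you invoke ``the standard theorem on $C^k$-dependence'', what is really used is $C^0$ dependence (Gr\"onwall) followed by the bootstrap $\tilde\gamma^{\varepsilon,(j+1)}=$ (Fa\`a di Bruno expression in $D^iT_\varepsilon\circ\tilde\gamma^\varepsilon$ and $\tilde\gamma^{\varepsilon,(i)}$, $i\le j$) --- this is routine, but spelling it out in one line would make the step airtight.
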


\begin{proof}
By assumptions, we have that $\gamma^\epsilon(s)\to \gamma(s)$ for every $s\in[0,1]$ and $\{f=1\}, \{f_\epsilon=1\}$ are compact. Thus, it suffices to show the local $C^k$-convergence.
    Let $v\in\{f=1\}$. Since $1$ is a regular value for $f$, $\nabla f(v)\neq 0$, and w.l.o.g we may assume that $v$ is such that $\partial_yf(v)\neq 0$. Let $O\subset\R^2$ be a neighborhood of $v$ such that $\partial_yf\neq 0$. Since $\nabla f_\varepsilon\to\nabla f$ uniformly, for $\varepsilon$ sufficiently small $\partial_yf_\varepsilon\neq 0$ on $O$ as well. For every $\varepsilon$, let $v_\varepsilon\in O\cap\{f_\varepsilon=1\}$, such that $v_\varepsilon\to v$. With the natural identification $v=v_0$, for every $\varepsilon\geq 0$, there exist open sets $U_{v_\varepsilon},V_{v_\varepsilon}\subset \R$ and a function $g_\varepsilon:U_{v_\varepsilon}\subset\R\to \R$ such that $v_\varepsilon\in U_{v_\varepsilon}\times V_{v_\varepsilon}\subset O$,  
    \begin{equation*}
        \{f_\varepsilon=1\}\cap U_{v_\varepsilon}\times V_{v_\varepsilon}=\{(t,g_\varepsilon(t)):t\in U_{v_\varepsilon}\},
    \end{equation*}
    and 
    \begin{equation}
    \label{eq:der_g}
        g_\varepsilon'(t)=-\frac{\partial_xf_\varepsilon}{\partial_yf_\varepsilon}(t,g_\varepsilon(t)).
    \end{equation}
Moreover, we may choose $U_{v_\varepsilon}=U_{v}$ and $V_{v_\varepsilon}=V_v$ and, up to an affine reparametrization, we may assume that $v_\varepsilon=(0,g_\varepsilon(0))$. Clearly $g_\varepsilon\to g$ uniformly on $U_v$. 
Thanks to \eqref{eq:der_g}, $g_\varepsilon\to g$ in $C^1(U_v)$ and a simple iterative argument, using \eqref{eq:der_g}, shows that $g_\varepsilon\to g$ in $C^k(U_v)$. At this point, consider $\eta_\varepsilon:U_v\to\R^2$; $\eta_\varepsilon(t):=(t,g_\varepsilon(t))$. Then, there exists $I_v\subset [0,1]$ such that $\gamma^\varepsilon(I_v)=\eta_\varepsilon(U_v)$, up to possibly restricting $U_v$. Then, we can find $\tilde I_v=[0,\alpha]$ and $\tilde\gamma^\varepsilon:\tilde I_v\to \R^2$ for every $\epsilon\geq 0$, such that $\tilde\gamma^\varepsilon:\tilde I_v\to \R^2$ is a $C^k$ arc-length (affine) reparametrization of a sub-arc of ${\gamma^\varepsilon}_{|I_v}$. By further restricting $U_v$ to $\tilde{U}_v$, and defining the $C^k$-diffeomorphic embedding  $\phi_\varepsilon: \tilde{U}_v\to \R$ by
    \begin{equation*}
        \phi_\varepsilon(t):=\int_0^t|\dot\eta_\varepsilon(\tau)|\de\tau
    \end{equation*}
    we have $\phi_\varepsilon\to \phi_0$ in $C^k(\tilde{U}_v)$. Furthermore, note that $\tilde I_v\subset \bigcap_{\epsilon>0} \phi_\epsilon(\tilde{U}_v)$ and $\dot{\phi}_\epsilon$ is uniformly bounded below for small $\epsilon>0$ on $\tilde{U}_v$. Thus, we have $\varphi_\epsilon^{-1}\to \varphi^{-1}$ in $C^k(\tilde I_v)$.
Thus, since $\eta_\varepsilon=\tilde{\gamma}^\varepsilon\circ \phi_\varepsilon$, we conclude that $\tilde{\gamma}^\varepsilon\to\tilde{\gamma}$ in $C^k(\mathcal{I}_v)$, and also $\gamma_\epsilon\to\gamma$ in $C^k_{loc}$ since their reparametrizations are affine.
\end{proof}

\begin{lemma}
\label{lem:P_convergence}
    Let $\{\normdot_\varepsilon\}_{\varepsilon >0}$ be a family of $C^k$ and strongly convex norms on $\R^2$, which $C^k$-strongly converges to a $C^k$ and strongly convex norm $\normdot_0$ on $\R^2$ as $\varepsilon \to 0$. For every $\varepsilon\geq 0$, let $P^\varepsilon : \R/2\pi_{\Omega_\varepsilon}\mathbb{Z} \to \partial\Omega_\varepsilon\subset \R^2$ that associate to the generalized angle $\theta$ the vector $P^\varepsilon_\theta$. Then, 
    \begin{equation}
    \label{eq:Ck_convergence}
        P^\varepsilon\to P^0, \qquad \text{in }C^k_{loc}(\R,\R^2),
    \end{equation}
    extending the maps by periodicity. 
\end{lemma}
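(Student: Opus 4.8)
The plan is to combine Lemma \ref{lem:curvescurvescurves}, applied to the squared norms $f_\varepsilon := \normdot_\varepsilon^2$, with the construction used in the proof of Lemma \ref{lem:regularity_P}, which realizes $P^\varepsilon$ as a constant-speed parametrization of $\partial\Omega_\varepsilon$ reparametrized by an ``angle function''. Since the periods $2\pi_{\Omega_\varepsilon}$ vary with $\varepsilon$, I would carry out the whole argument on the universal covers: lift the maps $P^\varepsilon$ (and the parametrizations involved) from $\R/2\pi_{\Omega_\varepsilon}\mathbb Z$ to $\R$, establish the $C^k_{loc}(\R)$-convergence there, and read off \eqref{eq:Ck_convergence}.

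First I would set up Lemma \ref{lem:curvescurvescurves}. By Definition \ref{def:strong_convergence}, $f_\varepsilon \to f_0$ in $C^k_{loc}(\R^2\setminus\{0\})$; restricting to the Euclidean unit circle gives $\normdot_\varepsilon \to \normdot_0$ uniformly there, hence by homogeneity all the $\Omega_\varepsilon$ lie in a fixed annulus $\{\rho \le |v| \le \rho^{-1}\}$ for $\varepsilon$ small. Gluing each $f_\varepsilon$ to a fixed constant strictly smaller than $1$ inside $\{|v|<\rho/2\}$ by a fixed cutoff, I replace $f_\varepsilon$ by proper $C^k$ functions on all of $\R^2$ that still satisfy $\{f_\varepsilon=1\}=\partial\Omega_\varepsilon$, still have $1$ as a regular value, and still converge to $f_0$ in $C^k_{loc}(\R^2)$, so that Lemma \ref{lem:curvescurvescurves} applies literally. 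For $\varepsilon\ge 0$, let $\gamma^\varepsilon:[0,1]\to\partial\Omega_\varepsilon$ be the constant-speed, counterclockwise parametrization with $\gamma^\varepsilon(0)=\gamma^\varepsilon(1)=P^\varepsilon_0$, the point where $\partial\Omega_\varepsilon$ meets the positive $x$-axis; since the norms converge pointwise, $P^\varepsilon_0\to P^0_0$, so Lemma \ref{lem:curvescurvescurves} gives $\gamma^\varepsilon\to\gamma^0$ in $C^k([0,1])$. As each $\gamma^\varepsilon$ is a $C^k$-parametrization of a $C^k$ closed curve, its $1$-periodic extension lies in $C^k_{loc}(\R,\R^2)$, and the extensions converge in $C^k_{loc}(\R,\R^2)$.

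Then, mimicking the proof of Lemma \ref{lem:regularity_P}, I would define $\theta^\varepsilon:\R\to\R$ by $\theta^\varepsilon(t):=\int_0^t\big(\gamma^\varepsilon_1\dot\gamma^\varepsilon_2-\gamma^\varepsilon_2\dot\gamma^\varepsilon_1\big)\diff s$, so that $\gamma^\varepsilon = P^\varepsilon\circ\theta^\varepsilon$ on $[0,1]$, $\theta^\varepsilon(t+1)=\theta^\varepsilon(t)+2\pi_{\Omega_\varepsilon}$, and $\theta^\varepsilon$ is an increasing $C^k$-diffeomorphism of $\R$ because $\dot\theta^\varepsilon=\gamma^\varepsilon\cdot(\dot\gamma^\varepsilon)^\perp>0$. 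Since the integrand converges in $C^{k-1}_{loc}(\R)$, this yields $\theta^\varepsilon\to\theta^0$ in $C^k_{loc}(\R)$, and in particular $2\pi_{\Omega_\varepsilon}=\theta^\varepsilon(1)\to\theta^0(1)=2\pi_{\Omega_0}$. The function $\dot\theta^0$ is continuous, $1$-periodic and positive, hence bounded below by some $c>0$, so $\dot\theta^\varepsilon\ge c/2$ on any fixed compact set for $\varepsilon$ small; reasoning exactly as at the end of the proof of Lemma \ref{lem:curvescurvescurves}, I get $(\theta^\varepsilon)^{-1}\to(\theta^0)^{-1}$ in $C^k_{loc}(\R)$. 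Finally, $P^\varepsilon=\gamma^\varepsilon\circ(\theta^\varepsilon)^{-1}$ holds on all of $\R$ (both sides are $2\pi_{\Omega_\varepsilon}$-periodic and agree on $[0,2\pi_{\Omega_\varepsilon}]$), and since $(\theta^\varepsilon)^{-1}$ sends each compact set into a fixed compact set for $\varepsilon$ small, the chain rule gives $P^\varepsilon\to P^0$ in $C^k_{loc}(\R,\R^2)$, which is \eqref{eq:Ck_convergence}.

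I expect the only real difficulty to be organizational rather than conceptual. The two analytic inputs — that $C^k_{loc}$-convergence is preserved under inverting diffeomorphisms with locally uniform positive lower bounds on the derivative, and under composition — are routine and already implicit in the proof of Lemma \ref{lem:curvescurvescurves} (there for $\varphi_\varepsilon^{-1}$). What needs genuine care is, first, the surgery on $f_\varepsilon$ near the origin, namely checking that it leaves $\{f_\varepsilon=1\}$, the regular value $1$, and the $C^k_{loc}$-convergence untouched; and second, the moving domains $\R/2\pi_{\Omega_\varepsilon}\mathbb Z$, which force one to lift to $\R$ throughout and to use $2\pi_{\Omega_\varepsilon}\to 2\pi_{\Omega_0}$ in order to make the statement ``$C^k_{loc}(\R)$-convergence after periodic extension'' meaningful and then verify it.
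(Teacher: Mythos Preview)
Your proposal is correct and follows essentially the same approach as the paper: apply Lemma \ref{lem:curvescurvescurves} to the squared norms to get $C^k$-convergence of constant-speed parametrizations $\gamma^\varepsilon$ of $\partial\Omega_\varepsilon$ anchored at $P^\varepsilon(0)$, then transfer this to $P^\varepsilon$ via the angle diffeomorphism $\theta^\varepsilon$ from Lemma \ref{lem:regularity_P}. The only organizational difference is how the moving periods are handled: the paper rescales, setting $\varphi_\varepsilon(t):=\theta_\varepsilon^{-1}(2\pi_{\Omega_\varepsilon}t)$ on $[0,1]$ and concluding from $P^\varepsilon(2\pi_{\Omega_\varepsilon}t)=\gamma^\varepsilon(\varphi_\varepsilon(t))$, whereas you lift everything to $\R$ by periodic extension and invert $\theta^\varepsilon$ there; both are equivalent. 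Your extra care about modifying $f_\varepsilon$ near the origin so that Lemma \ref{lem:curvescurvescurves} applies literally is a point the paper leaves implicit.
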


\begin{proof}
    For every $\varepsilon\geq 0$, let $\gamma^\varepsilon:[0,1]\to \R^2$ be a constant-speed $C^k$ parametrization of $\partial\Omega_\varepsilon$. Assume that $\gamma^\varepsilon(0)=\gamma^\varepsilon(1)=P^\varepsilon(0)$ for every $\varepsilon\geq 0$. Then, as a consequence of Lemma \ref{lem:curvescurvescurves}, $\gamma^\varepsilon\to\gamma^0$ in the $C^k$-topology. Reasoning as in Lemma \ref{lem:regularity_P}, for every $\varepsilon\geq 0$, we find a $C^k$-diffeomorphism $\theta_\varepsilon:[0,1]\to [0,2\pi_{\Omega_\varepsilon}]$ such that 
    \begin{equation}
    \label{eq:parametrization}
        \gamma^\varepsilon(t)=P^\varepsilon(\theta_\varepsilon(t)),\qquad\forall\,t\in[0,1]. 
    \end{equation}
    Since $\theta_\varepsilon$ is defined by \eqref{eq:def_theta}, with $\gamma^\varepsilon$ in place of $\gamma$ in the left-hand side, we deduce that $\theta_\varepsilon\to\theta_0$ in $C^k([0,1])$. Now, for every $\varepsilon\geq 0$, consider the function $\varphi_\varepsilon:[0,1]\to [0,1]$ defined by $\varphi_\varepsilon(t):=\theta_\varepsilon^{-1}(2\pi_{\Omega_\varepsilon}t)$. Since $2\pi_{\Omega_\varepsilon}\to 2\pi_{\Omega_0}$ and $\theta_\varepsilon\to\theta_0$ in $C^k([0,1])$, we have that $\varphi_\varepsilon\to \varphi_0$ in $C^k([0,1])$, implying \eqref{eq:Ck_convergence} as $P^\varepsilon(2\pi_{\Omega_\varepsilon}t)=\gamma^\varepsilon(\varphi_\varepsilon(t))$, for every $t\in [0,1]$.
\end{proof}

\begin{corollary}
\label{cor:C_convergence}
    Let $\{\normdot_\varepsilon\}_{\varepsilon >0}$ be a family of $C^k$ and strongly convex norms on $\R^2$, which $C^k$ strongly converges to a $C^k$ and strongly convex norm $\normdot_0$ on $\R^2$ as $\varepsilon \to 0$. Then, 
    \begin{equation}
        C_\circ^\varepsilon\to C_\circ^0,\qquad\text{in }C_{loc}^{k-1}(\R,\R)\text{ as $\varepsilon \to 0$}, 
    \end{equation}
    extending the maps by periodicity. 
\end{corollary}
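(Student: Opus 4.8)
The plan is to build on the closed formula \eqref{eq:Ccirctonorm}, which for every $\varepsilon\geq 0$ reads $C_\circ^\varepsilon = (P^\varepsilon)^{-1}\circ \diff(\normdot_\varepsilon)_*\circ Q^\varepsilon$, where $P^\varepsilon$ is the map of Lemma \ref{lem:regularity_P} and $Q^\varepsilon$ is the analogue of the map $Q$ of Definition \ref{def:correspondence} for $\normdot_\varepsilon$. Note that $Q^\varepsilon$ is precisely the ``$P$-map'' of the dual norm $(\normdot_\varepsilon)_*$, and that $\Xi^\varepsilon:=\diff(\normdot_\varepsilon)_*\circ Q^\varepsilon$ takes values in $\partial\Omega_\varepsilon$. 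By Remark \ref{rmk:dual_is_bello}, $\{(\normdot_\varepsilon)_*\}_{\varepsilon>0}$ is a family of $C^k$ and strongly convex norms that converges $C^k$-strongly to $(\normdot_0)_*$; hence $\diff(\normdot_\varepsilon)_*\to\diff(\normdot_0)_*$ in $C^{k-1}_{loc}(\R^2\setminus\{0\})$, and Lemma \ref{lem:P_convergence} applied to the dual norms gives $Q^\varepsilon\to Q^0$ in $C^k_{loc}(\R,\R^2)$. Since the images $Q^\varepsilon(\R)=\partial\Omega^\circ_\varepsilon$ lie in a fixed compact subset of $\R^2\setminus\{0\}$, composing these yields $\Xi^\varepsilon\to\Xi^0$ in $C^{k-1}_{loc}(\R,\R^2)$; on the other side, Lemma \ref{lem:P_convergence} directly gives $P^\varepsilon\to P^0$ in $C^k_{loc}$, each $P^\varepsilon$ being a $C^k$-diffeomorphism onto $\partial\Omega_\varepsilon$ by Lemma \ref{lem:regularity_P}.

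The main obstacle is then composing with the inverses $(P^\varepsilon)^{-1}$: these are defined on the \emph{moving} curves $\partial\Omega_\varepsilon$, so continuity of inversion cannot be invoked off the shelf. I would handle this by localizing. Fix $\phi_\ast\in\R$ and set $v_\ast:=\Xi^0(\phi_\ast)\in\partial\Omega_0$ and $\theta_\ast:=C_\circ^0(\phi_\ast)=(P^0)^{-1}(v_\ast)$. After a fixed rotation of coordinates we may assume $\partial_y(\normdot_0)^2(v_\ast)\neq 0$; then, arguing exactly as in the proof of Lemma \ref{lem:curvescurvescurves} (implicit function theorem plus iterated use of $g_\varepsilon'=-\partial_x(\normdot_\varepsilon)^2/\partial_y(\normdot_\varepsilon)^2$), one finds a fixed open interval $U$ and $C^k$ functions $g_\varepsilon\colon U\to\R$ with $g_\varepsilon\to g_0$ in $C^k(U)$ such that $\partial\Omega_\varepsilon$ coincides near $v_\ast$ with the graph $c^\varepsilon(t):=(t,g_\varepsilon(t))$, for all small $\varepsilon$; in particular $c^\varepsilon\to c^0$ in $C^k(U,\R^2)$ and $c^0(t_\ast)=v_\ast$ with $t_\ast:=(v_\ast)_1$.

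Next I would show $(P^\varepsilon)^{-1}\circ c^\varepsilon\to (P^0)^{-1}\circ c^0$ in $C^k(U)$. By the defining (swept-area) property of $P^\varepsilon$ together with Stokes' theorem, as in \eqref{eq:def_theta}, the map $t\mapsto (P^\varepsilon)^{-1}(c^\varepsilon(t))$ has derivative $c^\varepsilon_1\dot c^\varepsilon_2-c^\varepsilon_2\dot c^\varepsilon_1$, which converges in $C^{k-1}(U)$; and its value at $t_\ast$ converges to $\theta_\ast$, since any subsequential limit $\theta'$ of the bounded sequence $(P^\varepsilon)^{-1}(c^\varepsilon(t_\ast))$ satisfies $P^0(\theta')=\lim_\varepsilon P^\varepsilon\!\big((P^\varepsilon)^{-1}(c^\varepsilon(t_\ast))\big)=v_\ast$ by local uniform convergence $P^\varepsilon\to P^0$, hence $\theta'=\theta_\ast$ by injectivity of $P^0$. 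Integrating gives the desired $C^k(U)$-convergence. Finally, for $\phi$ in a neighbourhood $W$ of $\phi_\ast$ and $\varepsilon$ small, $\Xi^\varepsilon(\phi)$ lies on the graph $c^\varepsilon(U)$, so $C_\circ^\varepsilon(\phi)=\big[(P^\varepsilon)^{-1}\circ c^\varepsilon\big]\!\big(\Xi^\varepsilon_1(\phi)\big)$ on $W$; combining the $C^k(U)$-convergence of $(P^\varepsilon)^{-1}\circ c^\varepsilon$ with the $C^{k-1}$-convergence of $\Xi^\varepsilon_1$ gives $C_\circ^\varepsilon\to C_\circ^0$ in $C^{k-1}(W)$, and since $\phi_\ast$ is arbitrary this is the claim.

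I expect the only genuinely delicate points to be (i) the bookkeeping in the localization step — ensuring the graph neighbourhood and the interval $U$ can be chosen independently of $\varepsilon$, which is exactly what the argument of Lemma \ref{lem:curvescurvescurves} already provides — and (ii) tracking that each composition of convergent maps loses precisely one derivative, so that the final convergence is in $C^{k-1}$ and not $C^k$, consistently with the fact that $\diff(\normdot_\varepsilon)_*$ is only $C^{k-1}$.
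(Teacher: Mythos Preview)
Your argument is correct and shares the same scaffold as the paper's: start from $C_\circ^\varepsilon=(P^\varepsilon)^{-1}\circ \diff(\normdot_\varepsilon)_*\circ Q^\varepsilon$, get $C^{k-1}$-convergence of $\Xi^\varepsilon=\diff(\normdot_\varepsilon)_*\circ Q^\varepsilon$ from Remark~\ref{rmk:dual_is_bello} and Lemma~\ref{lem:P_convergence}, and then deal with the inverses $(P^\varepsilon)^{-1}$ on moving domains. The two proofs diverge only in this last step. The paper removes the ``moving domain'' issue by extending $(P^\varepsilon)^{-1}$ to a map on $\R^2\setminus\{0\}$ via radial projection, $f_\varepsilon(x):=(P^\varepsilon)^{-1}(x/\norm{x}_\varepsilon)$, then runs Arzel\`a--Ascoli (equi-Lipschitz from the explicit bound on $\Lip((P^\varepsilon)^{-1})$) and bootstraps to $C^k$ using $\diff_x f_\varepsilon=\big[(P^\varepsilon)'\circ f_\varepsilon\big]^{-1}\diff_x(\cdot/\normdot_\varepsilon)$. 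You instead stay on the curves $\partial\Omega_\varepsilon$, localize through the graph charts of Lemma~\ref{lem:curvescurvescurves}, and exploit directly that the swept-area derivative $c^\varepsilon_1\dot c^\varepsilon_2-c^\varepsilon_2\dot c^\varepsilon_1$ converges in $C^{k-1}$, anchoring the constant of integration by a compactness/subsequence argument. The paper's homogeneous extension is slicker in that it avoids patching local charts and the compactness step for the base point; your route is more hands-on but makes the role of the area parametrization very explicit. Either way the structure, and the loss of exactly one derivative (coming from $\diff(\normdot_\varepsilon)_*$), are the same.
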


\begin{proof}
    Recall that, for every $\varepsilon\geq 0$, $C_\circ^\varepsilon=(P^\varepsilon)^{-1}\circ\mathrm{d}(\normdot_\varepsilon)_*\circ Q^\varepsilon$ by \eqref{eq:Ccirctonorm}, and observe that, by Lemma \ref{lem:regularity_P}, $C_\circ^\varepsilon$ is a $C^{k-1}$ function. Now, thanks to our assumption and by Lemma \ref{lem:P_convergence}, $\mathrm{d}(\normdot_\varepsilon)_*\circ Q^\varepsilon$ converges to $\mathrm{d}(\normdot_0)_*\circ Q^0$ as $\varepsilon\to 0$ in the $C^{k-1}$-topology. Thus, we are left to check that $(P^\varepsilon)^{-1}\to (P^0)^{-1}$ as $\varepsilon \to 0$ in the $C^k$-topology. Define $f_\varepsilon:\R^2\setminus\{0\}\to \R$ as
    \begin{equation}
        f_\varepsilon(x):=(P^\varepsilon)^{-1}\left(\frac{x}{\norm{x}_\varepsilon}\right),\qquad\forall\,x\in\R^2\setminus\{0\},\ \varepsilon\geq 0.
    \end{equation}
    We claim that $f_\varepsilon\to f$ in $C^k_{loc}(\R^2\setminus\{0\})$ as $\varepsilon\to 0$. Indeed, let $K\subset\R^2\setminus\{0\}$ be compact, then for $\varepsilon_0$ sufficiently small the family $\{f_\varepsilon\}_{\varepsilon\in (0,\varepsilon_0)}$ is uniformly bounded in $K$:
    \begin{equation}  
        |f_\varepsilon(x)|\leq 2\pi_{\Omega_\varepsilon}\leq 4\pi_{\Omega_0}, \qquad\forall\,x\in K.
    \end{equation}
    In addition, it is equi-Lipschitz on $K$, as for $x,y\in K$ and $\varepsilon\in (0,\varepsilon_0)$ we have 
    \begin{equation}
        |f_\varepsilon(x)-f_\varepsilon(y)|\leq \Lip((P^\varepsilon)^{-1})\left|\frac{x}{\norm{x}_\varepsilon}-\frac{y}{\norm{y}_\varepsilon}\right|_{\R^2}\leq \Lip((P^\varepsilon)^{-1})\sup_{z\in K} \mathrm{d}_z\left(\frac{\cdot}{\normdot_\varepsilon}\right)|x-y|_{\R^2},
    \end{equation}
    and the constant in the right-hand side is uniformly bounded since $\{\normdot_\varepsilon\}_{\varepsilon >0}$ $C^k$-strongly converges to $\normdot_0$ as $\varepsilon\to 0$ and $\Lip((P^\varepsilon)^{-1})$ is bounded by the Euclidean distance between $\partial\Omega^\varepsilon$ and the origin, cf.\ \cite[Lem.\ 2.8]{borza2024measure}. Then, given any sequence $\{\varepsilon_n\}_{n \in \N}$ such that $\varepsilon_n \to 0$ as $n\to \infty$, using Arzel\`a-Ascoli theorem we can extract a subsequence $\{\varepsilon_{n(m)}\}_{m \in \N}$ such that $\{f_{\varepsilon_{n(m)}}\}_{m \in \N}$ converges uniformly as $m \to \infty$. Thanks to Lemma \ref{lem:P_convergence}, we must have that $\{f_{\varepsilon_{n(m)}}\}_{m \in \N} \to f_0$ uniformly as $m \to \infty$. Computing the differential of $f_{\varepsilon_{n(m)}}$, we see that
    \begin{equation}
        \mathrm d_xf_{\varepsilon_{n(m)}}=\frac{1}{(P^{\varepsilon_{n(m)}})'\circ f_{\varepsilon_{n(m)}}}\,\mathrm{d}_x\left(\frac{\cdot}{\normdot_{\varepsilon_{n(m)}}}\right).
    \end{equation}
    Since $(P^{\varepsilon_{n(m)}})'$ is uniformly bounded away from $0$ and converges uniformly to $(P^0)'$, we deduce that $\mathrm d_xf_{\varepsilon_{n(m)}}\to \mathrm d_xf_0$ on $K$ as $m\to \infty$, implying that $f_{\varepsilon_{n(m)}}\to f_0$ in $C^1_{loc}(\R^2\setminus\{0\})$ as $m\to \infty$. With an inductive argument, we conclude that $(P^{\varepsilon_{n(m)}})^{-1}\to (P^0)^{-1}$ as $m \to \infty$ in $C^k_{loc}(\R^2\setminus\{0\})$. As this holds for every sequence $\{\varepsilon_n\}_{n \in \N}$, we deduce that $(P^\varepsilon)^{-1}\to (P^0)^{-1}$ in $C^k_{loc}(\R^2\setminus\{0\})$ as $\varepsilon \to 0$, concluding the proof.
\end{proof}

\begin{theorem}
\label{thm:continuity}
Let $k\geq 3$ and let $\{\normdot_\varepsilon\}_{\varepsilon >0}$ be a family of $C^k$ and strongly convex norms on $\R^2$, which $C^k$-strongly converges to a $C^k$ and strongly convex norm $\normdot_0$ on $\R^2$ as $\varepsilon\to 0$. For every $\varepsilon\geq 0$, denote by $\di_\varepsilon$ the \sF distance induced by $\normdot_\varepsilon$ on $\hei$ and by $N_{\mathrm{curv}}^\varepsilon$ the curvature exponent of $(\hei,\di_\varepsilon,\Leb^3)$. Then, we have that
        \begin{equation}
    N_{\mathrm{curv}}^0=\lim_{\varepsilon\to 0} N_{\mathrm{curv}}^\varepsilon.
        \end{equation}     
\end{theorem}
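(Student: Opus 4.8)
The plan is to prove the two inequalities $N_{\mathrm{curv}}^0 \leq \liminf_{\varepsilon\to 0} N_{\mathrm{curv}}^\varepsilon$ and $\limsup_{\varepsilon\to 0} N_{\mathrm{curv}}^\varepsilon \leq N_{\mathrm{curv}}^0$ separately. The first one is soft. Since $\normdot_\varepsilon^2\to\normdot_0^2$ in $C^k_{loc}(\R^2\setminus\{0\})$ with $k\geq 2$, in particular $\normdot_\varepsilon\to\normdot_0$ uniformly on $\{\normdot_0=1\}$, so there exist $\lambda_\varepsilon\to 1$ with $\lambda_\varepsilon^{-1}\normdot_0\leq\normdot_\varepsilon\leq\lambda_\varepsilon\normdot_0$; this passes to lengths of admissible curves, and hence to the sub-Finsler distances, as $\lambda_\varepsilon^{-1}\di_0\leq\di_\varepsilon\leq\lambda_\varepsilon\di_0$ on $\hei$. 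Therefore $(\hei,\di_\varepsilon,\e,\Leb^3)\to(\hei,\di_0,\e,\Leb^3)$ in the pointed measured Gromov--Hausdorff sense, and Lemma \ref{lem:lsc_curv_exp} yields $N_{\mathrm{curv}}^0\leq\liminf_{\varepsilon\to 0}N_{\mathrm{curv}}^\varepsilon$.

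For the reverse inequality I would work with the differential characterisation of Proposition \ref{prop:diffcharacMCP}: setting $N^\varepsilon(\phi,\omega):=1+\omega\,\partial_\omega\J_R^\varepsilon(\phi,\omega)/\J_R^\varepsilon(\phi,\omega)$ on the domain $\mathcal{U}_\varepsilon$ of the exponential map $G^\varepsilon_t$ associated with $\normdot_\varepsilon$, one has $N_{\mathrm{curv}}^\varepsilon=\sup_{(\phi,\omega)\in\mathcal{U}_\varepsilon}N^\varepsilon(\phi,\omega)$, the supremum being finite (each $\normdot_\varepsilon$ is $C^2$ and strongly convex, see \cite{borza2024measure}) and $\J_R^\varepsilon$ being positive on $\mathcal{U}_\varepsilon$. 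Two uniform-in-$\varepsilon$ ingredients are needed. First, the Hessians of $\normdot_\varepsilon^2$ converge uniformly on the unit sphere, so the family $\{\normdot_\varepsilon\}$ is uniformly strongly convex and uniformly $C^{1,1}$ for $\varepsilon$ small; applying \cite[Prop.\ 2.16]{borza2024measure} to $\normdot_\varepsilon$ and to its dual, together with the $C^{k-1}_{loc}$-convergence $C_\circ^\varepsilon\to C_\circ^0$ of Corollary \ref{cor:C_convergence} (note $k-1\geq 2$), gives constants $0<c\leq C<\infty$ such that $c\leq (C_\circ^\varepsilon)'\leq C$ and $(C_\circ^\varepsilon)''$ is uniformly bounded, for $\varepsilon$ small. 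Second, substituting Corollary \ref{cor:C_convergence} and the polar counterpart of Lemma \ref{lem:P_convergence} (available by Remark \ref{rmk:dual_is_bello}, and recalling Proposition \ref{prop:difftrig} for the trigonometric derivatives) into the explicit formula \eqref{eq:reduced_Jac}, one obtains $\J_R^\varepsilon\to\J_R^0$ in $C^{k-1}_{loc}(\R^2)$, so that $\J_R^\varepsilon$, $\partial_\omega\J_R^\varepsilon$ and $\partial_\omega^2\J_R^\varepsilon$ converge locally uniformly on $\R^2$; this is where $k\geq 3$ enters. The $\varepsilon$-dependence of the domain $\mathcal{U}_\varepsilon$, through the moving period $2\pi_{\Omega_\varepsilon^\circ}\to 2\pi_{\Omega_0^\circ}$, is harmless once every $\J_R^\varepsilon$ is regarded as a genuine function on $\R^2$ and periodicity in $\phi$ is used.

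Now fix $\eta>0$; the goal is to show $N_{\mathrm{curv}}^\varepsilon\leq N_{\mathrm{curv}}^0+\eta$ for $\varepsilon$ small, which I would do by splitting $\mathcal{U}_\varepsilon$ into three regions according to the size of $\omega$. For $|\omega|$ small: by Proposition \ref{prop:formulaJRdJRint}, the uniform bound $|R^\varepsilon(\phi,\omega)|\leq M\omega^6$ (the constant $M$ only depending on $\Lip(C_\circ^\varepsilon)$ and sup-norms of trigonometric functions, hence uniform), and the expansion $P^\varepsilon(\phi,\omega)=\frac{1}{12}\big((C_\circ^\varepsilon)'(\phi)\big)^2\omega^4+O(\omega^5)$ with uniform remainder --- crucially using $(C_\circ^\varepsilon)'\geq c$ to keep the leading term non-degenerate --- one gets $N^\varepsilon(\phi,\omega)=5+O(\omega)$ uniformly in $\phi$ and $\varepsilon$, so $N^\varepsilon(\phi,\omega)\leq 5+\eta\leq N_{\mathrm{curv}}^0+\eta$ whenever $0<|\omega|\leq\delta_1$. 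For $|\omega|$ close to $2\pi_{\Omega_\varepsilon^\circ}$: the Pythagorean identity \eqref{eq:pytagorean} and the periodicity \eqref{eq:periodicity_Ccirc} give, by direct computation, $\J_R^\varepsilon(\phi,\pm 2\pi_{\Omega_\varepsilon^\circ})=0$ and $\partial_\omega\J_R^\varepsilon(\phi,\pm 2\pi_{\Omega_\varepsilon^\circ})=\mp\,2\pi_{\Omega_\varepsilon^\circ}(C_\circ^\varepsilon)'(\phi)$, so a second-order Taylor expansion (with $\partial_\omega^2\J_R^\varepsilon$ uniformly bounded) yields $\J_R^\varepsilon(\phi,\omega)=2\pi_{\Omega_\varepsilon^\circ}(C_\circ^\varepsilon)'(\phi)\big(2\pi_{\Omega_\varepsilon^\circ}-|\omega|\big)+O\big((2\pi_{\Omega_\varepsilon^\circ}-|\omega|)^2\big)$ and hence $N^\varepsilon(\phi,\omega)=1-\frac{2\pi_{\Omega_\varepsilon^\circ}}{2\pi_{\Omega_\varepsilon^\circ}-|\omega|}+O(1)\to-\infty$ uniformly; so there is $\rho>0$ with $N^\varepsilon(\phi,\omega)\leq 5\leq N_{\mathrm{curv}}^0$ whenever $2\pi_{\Omega_\varepsilon^\circ}-\rho\leq|\omega|<2\pi_{\Omega_\varepsilon^\circ}$. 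Finally, on the middle region $\delta_1\leq|\omega|\leq 2\pi_{\Omega_\varepsilon^\circ}-\rho$, which for $\varepsilon$ small lies --- using periodicity in $\phi$ --- in a fixed compact $K\subset\R^2$ where $\J_R^0\geq u>0$: local uniform convergence gives $\J_R^\varepsilon\geq u/2$ on $K$ for $\varepsilon$ small, so $N^\varepsilon\to N^0$ uniformly on $K$, whence $\sup_{\text{middle}}N^\varepsilon\leq\sup_K N^0+o_\varepsilon(1)\leq N_{\mathrm{curv}}^0+\eta$. Combining the three regions gives $N_{\mathrm{curv}}^\varepsilon\leq N_{\mathrm{curv}}^0+\eta$ for $\varepsilon$ small; letting $\eta\to 0$, together with the first inequality, concludes.

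I expect the main obstacle to be precisely the two degenerate regimes $\omega\to 0$ and $|\omega|\to 2\pi_{\Omega_\varepsilon^\circ}$, where $\J_R^\varepsilon$ vanishes and $N^\varepsilon$ is an indeterminate $0/0$ form: uniform convergence of the $\J_R^\varepsilon$ is useless there, and one has to produce Taylor expansions of $\J_R^\varepsilon$ that are genuinely uniform in $\varepsilon$. This is where the hypothesis $k\geq 3$ --- which, via Corollary \ref{cor:C_convergence}, forces the $C^2$-convergence $C_\circ^\varepsilon\to C_\circ^0$ and hence uniform control of $(C_\circ^\varepsilon)''$ and of the second $\omega$-derivatives of $\J_R^\varepsilon$ --- and the uniform strong convexity --- which keeps $(C_\circ^\varepsilon)'$ bounded away from $0$, so that the leading coefficient of $\J_R^\varepsilon$ near $\omega=0$ does not degenerate --- are indispensable. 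A secondary, bookkeeping-type difficulty is the moving domain $\mathcal{U}_\varepsilon$, which I would handle as indicated above by working with the $\J_R^\varepsilon$ as functions on all of $\R^2$.
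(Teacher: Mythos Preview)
Your proof is correct and follows the same overall scaffolding as the paper (the differential characterisation in Proposition~\ref{prop:diffcharacMCP}, the $C^{k-1}$-convergence of $\J_R^\varepsilon$ obtained from Lemma~\ref{lem:P_convergence} and Corollary~\ref{cor:C_convergence}, and a three-region decomposition according to where $\J_R^\varepsilon$ can vanish), but the two arguments diverge at the singular regime $\omega\to 0$. The paper proves the stronger assertion that $N^\varepsilon\to N^0$ \emph{uniformly} on the whole (reparametrised) domain: this requires the detailed Taylor analysis of Lemmas~\ref{lem:convergence_quotient} and \ref{lem:convergence_remainder}, and in return yields both inequalities at once without appealing to Lemma~\ref{lem:lsc_curv_exp}. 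You instead split the two directions: for $\liminf$ you use the soft pmGH route and Lemma~\ref{lem:lsc_curv_exp}, and near $\omega=0$ you only need the cruder bound $N^\varepsilon(\phi,\omega)\le 5+\eta\le N_{\mathrm{curv}}^0+\eta$ (using Theorem~\ref{thm:rigidityinsection} for $N_{\mathrm{curv}}^0\ge 5$). This trade-off is legitimate and makes the $\omega\to 0$ step considerably lighter than in the paper, at the price of invoking Gromov--Hausdorff stability. Your explicit computation $\partial_\omega\J_R^\varepsilon(\phi,\pm 2\pi_{\Omega_\varepsilon^\circ})=\mp 2\pi_{\Omega_\varepsilon^\circ}(C_\circ^\varepsilon)'(\phi)$ is also a bit more hands-on than the paper's sign argument at the endpoints, but amounts to the same mechanism.

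One detail to tighten: to conclude $N^\varepsilon(\phi,\omega)=5+O(\omega)$ uniformly you need not only $|R^\varepsilon(\phi,\omega)|\le M\omega^6$ but also a uniform bound $|\partial_\omega R^\varepsilon(\phi,\omega)|\le M'\omega^5$ (and the analogous expansion for $\partial_\omega P^\varepsilon$), since $N^\varepsilon$ involves $\partial_\omega\J_R^\varepsilon$. This follows by differentiating the integral formula \eqref{eq:remainderterm} in $\omega$ --- the paper carries this out in Lemma~\ref{lem:convergence_remainder}, see \eqref{eq:contazzi2} --- and uses only the uniform bounds on $(C_\circ^\varepsilon)'$ and on the trigonometric functions that you already established, so it is routine, but it should be stated rather than absorbed into ``one gets''.
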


\begin{proof}
     In order to prove the continuity of the curvature exponent, we need to address the convergence of the reduced Jacobians associated with the norms. To this aim, it is convenient to introduce the following parametrization: for every $\varepsilon\geq 0$, define the map
    \begin{equation}
        \widetilde\J_R^\varepsilon:[0,1)\times (0,1)\to [0,\infty); \qquad \widetilde\J_R^\varepsilon(s,r):=\J_R^\varepsilon(2\pi_{\Omega^\circ_\varepsilon}s,-2\pi_{\Omega^\circ_\varepsilon}(1-r)+2\pi_{\Omega^\circ_\varepsilon}r).
    \end{equation}
    Then, recalling the explicit expression of the reduced Jacobian \eqref{eq:reduced_Jac}, as a consequence of Lemma \ref{lem:P_convergence} and Corollary \ref{cor:C_convergence}, we easily obtain that
    \begin{equation}
    \label{eq:uniform_convergence_red_jactilde}
        \widetilde \J_R^\varepsilon\xrightarrow{\varepsilon\to 0}\widetilde \J_R^0,\qquad\text{in }C^{k-1}([0,1)\times (0,1)).
    \end{equation}
    In addition, since the norms of the sequence are $C^1$ and strongly convex, \cite[Prop.\ 3.16]{borza2024measure} implies that the reduced Jacobians are always non-negative and 
    \begin{equation}
    \label{eq:zeros_redjac}
        \lim_{r\to r^\star}\widetilde \J_R^\varepsilon(s,r)=0\qquad\Longleftrightarrow\qquad r^\star=0,1,\frac12.
    \end{equation}
    According to Proposition \ref{prop:diffcharacMCP}, after repara\-metrization, we have that 
    \begin{equation}
    \label{eq:tha_sup}
    N_{\mathrm{curv}}^\varepsilon = \sup_{(s,r)\in [0,1)\times (0,1)} \widetilde N^\varepsilon(s,r)
    \end{equation}
    where, for every $(s,r)\in [0,1)\times (0,1)$,
    \begin{equation}
        \widetilde N^\varepsilon(s,r):=1+\frac{(r-\frac12)\partial_r \widetilde\J_R^\varepsilon(s,r)}{\widetilde\J_R^\varepsilon(s,r)}. 
    \end{equation}
    Firstly, observe that for every $s\in [0,1)$, $\J_R^0(s,\cdot)$ is strictly positive on $(0,1)\setminus\{\frac12\}$ and \eqref{eq:zeros_redjac} holds. Therefore, there exists $\delta>0$ sufficiently small such that $(r-\frac12)\partial_r\J_R^0(s,r)<0$ in $[0,1)\times ((0,\delta)\cup (1-\delta,1))$. Since $\partial_r \widetilde\J_R^\varepsilon(s,r)\to\partial_r \widetilde\J_R^0(s,r)$ uniformly as $\varepsilon\to 0$, then there exists $\varepsilon_0>0$ such that 
    \begin{equation}
        \Big(r-\frac12\Big)\partial_r \widetilde\J_R^\varepsilon(s,r)<0,\qquad\forall\, (s,r)\in [0,1)\times ((0,\delta)\cup (1-\delta,1)),\,\varepsilon< \varepsilon_0.
    \end{equation}
    This means that the supremum in \eqref{eq:tha_sup} can be taken on the set $[0,1)\times [\delta,1-\delta]$. Secondly, we claim that $\widetilde N^\varepsilon\to \widetilde N^0$ uniformly on $[0,1)\times [\delta,1-\delta]$, this would conclude the proof.
    
    To prove the claim, first of all, by \eqref{eq:uniform_convergence_red_jactilde} and \eqref{eq:zeros_redjac}, we immediately deduce that $\widetilde N^\varepsilon\to \widetilde N^0$ locally uniformly on $[0,1)\times ((0,1)\setminus \{\frac12\})$. Second of all, by periodicity and compactness, the convergence is actually uniform as $s\in [0,1)$, therefore we are left to check that we have a uniform convergence around $r=\frac12$. To do so, we use \eqref{eq:JRaround0} to obtain a simplified expression for $N^\varepsilon$: after reparametrization, we get the following:
    \begin{equation}
    \label{eq:Ntilde_epsilon}
        \widetilde N^\varepsilon(s,r)=1+ \frac{(r-\frac12)\partial_r(I^\varepsilon(s,r)+\widetilde R^\varepsilon(s,r))}{I^\varepsilon(s,r)+\widetilde R^\varepsilon(s,r)}=1+\frac{(r-\frac12)\partial_rI^\varepsilon(s,r)}{I^\varepsilon(s,r)}+\mathcal R^\varepsilon(s,r), 
    \end{equation}
    where $\widetilde R^\varepsilon$ denotes the reparametrization of $R^\varepsilon$ defined in \eqref{eq:remainderterm} (for the norm $\normdot_\varepsilon$) and 
    \begin{equation}
    \label{eq:def_I}
        I^\varepsilon(s,r):=\frac{1}{2} \int_{1/2}^{r} \left( \int_{1/2}^{r} (t - \tau)^2 (C_\circ^\varepsilon)'(2\pi_{\Omega^\circ_\varepsilon}(s-1+2t)) (C_\circ^\varepsilon)'(2\pi_{\Omega^\circ_\varepsilon}(s-1+2\tau)) \diff \tau \right) \diff t.
    \end{equation}
    In \eqref{eq:Ntilde_epsilon}, the term $\mathcal R^\varepsilon(s,r)$ is defined as the following difference 
    \begin{equation}
        \widetilde N^\varepsilon(s,r)-1-\frac{(r-\frac12)\partial_rI^\varepsilon(s,r)}{I^\varepsilon(s,r)}.
    \end{equation}
    The proof now follows from Lemma \ref{lem:convergence_quotient} and Lemma \ref{lem:convergence_remainder}.
\end{proof}

\begin{lemma}
\label{lem:convergence_quotient}
    With the assumptions and notations of Theorem \ref{thm:continuity}, denoting for every $\varepsilon\geq 0$
    \begin{equation}
        \mathcal Q^\varepsilon(s,r):=\frac{(r-\frac12)\partial_rI^\varepsilon(s,r)}{I^\varepsilon(s,r)},\qquad\forall\,(s,r)\in [0,1)\times (0,1), 
    \end{equation}
    where $I^\varepsilon$ is defined in \eqref{eq:def_I}, we have $\mathcal Q^\varepsilon\to \mathcal Q^0$ uniformly on $[0,1)\times (0,1)$, as $\varepsilon\to 0$.
\end{lemma}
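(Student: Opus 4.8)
The plan is to remove the simultaneous degeneracy of numerator and denominator of $\mathcal{Q}^\varepsilon$ as $r\to\tfrac12$ by a rescaling of the integration variables. I would set $h:=r-\tfrac12$ and, in the double integral \eqref{eq:def_I} defining $I^\varepsilon(s,r)$, substitute $t=\tfrac12+h\sigma$, $\tau=\tfrac12+h\rho$ with $\sigma,\rho\in[0,1]$ (valid for $h$ of either sign), noting that $s-1+2t=s+2h\sigma$. Writing $g^\varepsilon_{s,h}(\sigma):=(C_\circ^\varepsilon)'\big(2\pi_{\Omega^\circ_\varepsilon}(s+2h\sigma)\big)$, this yields $I^\varepsilon(s,r)=\tfrac{h^4}{2}A^\varepsilon(s,h)$ with $A^\varepsilon(s,h):=\int_0^1\!\int_0^1(\sigma-\rho)^2 g^\varepsilon_{s,h}(\sigma)g^\varepsilon_{s,h}(\rho)\,\de\sigma\,\de\rho$; differentiating \eqref{eq:def_I} under the integral sign and using the symmetry of the integrand in $(t,\tau)$ gives $\partial_r I^\varepsilon(s,r)=h^3 B^\varepsilon(s,h)$ with $B^\varepsilon(s,h):=g^\varepsilon_{s,h}(1)\int_0^1(1-\rho)^2 g^\varepsilon_{s,h}(\rho)\,\de\rho$. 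Since $r-\tfrac12=h$, this produces the clean formula $\mathcal{Q}^\varepsilon(s,r)=2B^\varepsilon(s,h)/A^\varepsilon(s,h)$ for $r\neq\tfrac12$, whose right-hand side extends continuously to $r=\tfrac12$; in this parametrization the degeneracy has disappeared.

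With this reduction in place the rest is routine. I would first check that, for $\varepsilon>0$ small, $g^\varepsilon_{s,h}$ is bounded above and below by positive constants, uniformly in $(s,h,\sigma)\in[0,1)\times[-\tfrac12,\tfrac12]\times[0,1]$: by Corollary \ref{cor:C_convergence}, $(C_\circ^\varepsilon)'\to(C_\circ^0)'$ uniformly on compact sets; the arguments $2\pi_{\Omega^\circ_\varepsilon}(s+2h\sigma)$ all lie in one fixed compact interval $K_0$ (using $\pi_{\Omega^\circ_\varepsilon}\to\pi_{\Omega^\circ_0}$); and on $K_0$ the continuous function $(C_\circ^0)'$ is bounded above and bounded below by a positive constant, since strong convexity makes $C_\circ^0$ bi-Lipschitz (its inverse $C^\circ$ being Lipschitz, cf.\ \cite[Prop.\ 2.16]{borza2024measure}). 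Combining this with the uniform convergence on $K_0$ gives $0<c\le g^\varepsilon_{s,h}(\sigma)\le C<\infty$ for $\varepsilon$ small, so that $A^\varepsilon(s,h)\ge c^2\int_0^1\!\int_0^1(\sigma-\rho)^2\,\de\sigma\,\de\rho=\tfrac{c^2}{6}>0$ uniformly. Next I would observe that $g^\varepsilon_{s,h}(\sigma)\to g^0_{s,h}(\sigma)$ uniformly in $(s,h,\sigma)$ — combining the uniform convergence of $(C_\circ^\varepsilon)'$ on $K_0$ with $\pi_{\Omega^\circ_\varepsilon}\to\pi_{\Omega^\circ_0}$ and the uniform continuity of $(C_\circ^0)'$ on $K_0$ — whence $A^\varepsilon\to A^0$ and $B^\varepsilon\to B^0$ uniformly on $[0,1)\times[-\tfrac12,\tfrac12]$; here one also uses that all these quantities are $1$-periodic in $s$, so one may work on a compact $s$-interval.

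Finally, since the denominators $A^\varepsilon$ stay uniformly bounded away from $0$ while numerators and denominators converge uniformly, I would conclude $\mathcal{Q}^\varepsilon=2B^\varepsilon/A^\varepsilon\to 2B^0/A^0=\mathcal{Q}^0$ uniformly on $[0,1)\times(0,1)$. The only genuine obstacle is the $0/0$ behaviour of $\mathcal{Q}^\varepsilon$ at $r=\tfrac12$, which prevents a direct comparison of the quotients; once the common factor $(r-\tfrac12)^4$ has been extracted through the rescaling above, the claim reduces to the elementary fact that a quotient of uniformly convergent integrals with uniformly bounded (above and below) integrands converges uniformly. A minor point worth verifying along the way is that the hypothesis $k\ge 3$ (hence $k-1\ge 2$) is what secures, via Corollary \ref{cor:C_convergence}, the uniform convergence of $(C_\circ^\varepsilon)'$ that the argument relies on.
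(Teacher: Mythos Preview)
Your argument is correct and genuinely different from the paper's. By substituting $t=\tfrac12+h\sigma$, $\tau=\tfrac12+h\rho$ you factor out $h^4$ cleanly from both $I^\varepsilon$ and $(r-\tfrac12)\partial_r I^\varepsilon$, obtaining $\mathcal Q^\varepsilon=2B^\varepsilon/A^\varepsilon$ with $A^\varepsilon$ uniformly bounded below. The uniform convergence then follows directly from the uniform convergence of $(C_\circ^\varepsilon)'$ (Corollary~\ref{cor:C_convergence}) and the convergence $\pi_{\Omega^\circ_\varepsilon}\to\pi_{\Omega^\circ_0}$; the lower bound $A^\varepsilon\ge \tfrac{c^2}{6}$ relies on $(C_\circ^0)'$ being bounded away from zero, which you justify via the bi-Lipschitz property of $C_\circ^0$.

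The paper instead Taylor-expands $I^\varepsilon$ and $J^\varepsilon$ about $r=\tfrac12$ (using $(C_\circ^\varepsilon)''$), derives two-sided bounds $c|r-\tfrac12|^4\le |I^\varepsilon|\le C|r-\tfrac12|^4$, then shows equi-Lipschitzness of $\mathcal Q^\varepsilon$ by bounding $\partial_r\mathcal Q^\varepsilon$ and invokes Arzel\`a--Ascoli together with pointwise convergence. Your route is shorter, avoids Arzel\`a--Ascoli and the somewhat delicate cancellation in the order-$7$ numerator of $\partial_r\mathcal Q^\varepsilon$, and in fact only uses $C^1$-convergence of $C_\circ^\varepsilon$ (i.e.\ $k\ge 2$ would already suffice for this lemma), whereas the paper's expansion needs $(C_\circ^\varepsilon)''$. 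The paper's approach, on the other hand, yields as a by-product the explicit Taylor coefficients of $I^\varepsilon$ at $r=\tfrac12$, which are conceptually aligned with the expansions appearing elsewhere (e.g.\ in the rigidity argument and in Lemma~\ref{lem:convergence_remainder}).
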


\begin{proof}
    Before proving the claim, let us observe that 
    \begin{equation}
    \label{eq:expression_I_eps}
        \partial_rI^\varepsilon(s,r)=(C_\circ^\varepsilon)'(2\pi_{\Omega^\circ_\varepsilon}(s-1+2r))\underbrace{\int_{1/2}^{r} (t - r)^2 (C_\circ^\varepsilon)'(2\pi_{\Omega^\circ_\varepsilon}(s-1+2t)) \de t}_{=:J^\varepsilon(s,r)}.
    \end{equation}
    The function $J^\varepsilon(s,r)$ has a Taylor expansion with uniform (in $\varepsilon$) remainder at $r=\frac12$, indeed: 
    \begin{equation}
    \label{eq:expansion_J_eps}
    \begin{split}
        J^\varepsilon(s,r) &=\frac{(C_\circ^\varepsilon)'(2\pi_{\Omega^\circ_\varepsilon}s)}{3}\Big(r-\frac12\Big)^3+\int_{1/2}^r \frac{(t-\frac12)^3}{3}(C_\circ^\varepsilon)''(2\pi_{\Omega^\circ_\varepsilon}(s-1+2t))\de t\\
                           &=\frac{(C_\circ^\varepsilon)'(2\pi_{\Omega^\circ_\varepsilon}s)}{3}\Big(r-\frac12\Big)^3+O\left(\Big(r-\frac12\Big)^4\right),\qquad\text{as }r\to\frac12,
    \end{split}
    \end{equation}
    where the big-$O$ term is uniform in $\varepsilon$ and depends on a uniform bound for $\Lip((C_\circ^\varepsilon)'')$. From this expansion, we deduce an expansion for the function $I^\varepsilon(s,r)$ at $r=\frac12$, using \eqref{eq:expression_I_eps} and a uniform bound on $\Lip((C_\circ^\varepsilon)')$. More precisely, we have: 
    \begin{equation}
    \label{eq:expansion_I_eps}
        I^\varepsilon(s,r)=\int_{1/2}^r \partial_rI^\varepsilon(s,\tau)\de\tau=\frac{(C_\circ^\varepsilon)'(2\pi_{\Omega^\circ_\varepsilon}s)^2}{12}\Big(r-\frac12\Big)^4+O\left(\Big(r-\frac12\Big)^5\right),\qquad\text{as }r\to\frac12,
    \end{equation}
    where the big-$O$ term is once again uniform in $\varepsilon$.
    
    We intend to apply Arzel\`a-Ascoli theorem to the functions $\mathcal{Q}^\varepsilon$. Firstly, by \eqref{eq:expansion_I_eps} and \eqref{eq:expression_I_eps}, there exists constants $C>c>0$, not depending on $\varepsilon$, such that
    \begin{equation}
    \label{eq:uniform_bds_I}
        c \Big|r-\frac12\Big|^4\leq |I^\varepsilon|\leq C \Big|r-\frac12\Big|^4,\qquad |\partial_rI^\varepsilon|\leq C\Big|r-\frac12\Big|^3.
    \end{equation}
    This means that the functions $\mathcal{Q}^\varepsilon$ are uniformly bounded by a constant not depending on $\varepsilon$. Secondly, we find a uniform bound for the gradient of $\mathcal Q^\varepsilon$. For the derivative in the $s$-direction, the estimates from the first part of the proof, together with a uniform bound on $\Lip((C_\circ^\varepsilon)')$, allows to conclude. For the derivative in the $r$-direction, it is convenient to write:
    \begin{multline}
    \label{eq:expression_Q_eps}
        \partial_r \mathcal Q^\varepsilon(s,r)=4\pi_{\Omega^\circ_\varepsilon}  (C_\circ^\varepsilon)''(2\pi_{\Omega^\circ_\varepsilon}(s-1+2r)) \frac{(r-\frac12)J^\varepsilon(s,r)}{I^\varepsilon(s,r)}\\+ (C_\circ^\varepsilon)'(2\pi_{\Omega^\circ_\varepsilon}(s-1+2r))\partial_r\left( \frac{(r-\frac12)J^\varepsilon(s,r)}{I^\varepsilon(s,r)}\right)
    \end{multline}
    The first term can be uniformly estimated reasoning as in the first part of the proof, using the expansion \eqref{eq:expansion_J_eps} and also a uniform bound on $\Lip((C_\circ^\varepsilon)')$. For controlling the second term, we need a uniform bound on 
    \begin{equation}
        \partial_r\left( \frac{(r-\frac12)J^\varepsilon(s,r)}{I^\varepsilon(s,r)}\right)=\frac{(J^\varepsilon(s,r)+(r-\frac12)\partial_rJ^\varepsilon(s,r))I^\varepsilon(s,r)-(r-\frac12)J^\varepsilon(s,r)\partial_r I^\varepsilon(s,r)}{I^\varepsilon(s,r)^2}.
    \end{equation}
    Note that, a priori, the numerator has order $7$ at $r=\frac12$, while the denominator has order $8$. However, replacing the Taylor expansions \eqref{eq:expansion_I_eps} and \eqref{eq:expansion_J_eps}, after a routine computation, one obtains that: 
    \begin{equation}
        (J^\varepsilon(s,r)+(r-\frac12)\partial_rJ^\varepsilon(s,r))I^\varepsilon(s,r)-(r-\frac12)J^\varepsilon(s,r)\partial_r I^\varepsilon(s,r)=O\left(\Big(r-\frac12\Big)^8\right),\qquad\text{as }r\to\frac12
    \end{equation}
    where the big-$O$ is uniform in $\varepsilon$. This shows the claimed uniform bound for the second term of \eqref{eq:expression_Q_eps}. Finally, we have proven that $\{\mathcal Q^\varepsilon\}_{\varepsilon\geq 0}$ is uniformly bounded and equi-Lipschitz as $\varepsilon\to 0$. Therefore, since we also know that $\mathcal Q^\varepsilon(s,r)\to\mathcal Q^0(s,r)$ pointwise as $\varepsilon\to 0$, we conclude the proof of the lemma. 
\end{proof}

\begin{lemma}
\label{lem:convergence_remainder}
    With the assumptions and notations of Theorem \ref{thm:continuity}, $\mathcal R^\varepsilon\to\mathcal R^0$ uniformly on $[0,1)\times (0,1)$, as $\varepsilon\to 0$.
\end{lemma}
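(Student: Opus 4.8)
The plan is to turn $\mathcal R^\varepsilon$ into a single fraction in which the degeneracy at $r=\tfrac12$ is explicit, estimate that fraction quantitatively in a neighbourhood of $r=\tfrac12$ by combining a uniform-in-$\varepsilon$ control of the error term $\widetilde R^\varepsilon$ with the Taylor expansions of $I^\varepsilon$ already established inside the proof of Lemma~\ref{lem:convergence_quotient}, and finally dispose of the region where $r$ stays away from $\tfrac12$ by the same continuity and compactness reasoning as in that lemma. By the definition of $\mathcal R^\varepsilon$ in the proof of Theorem~\ref{thm:continuity} and the identity \eqref{eq:Ntilde_epsilon}, one has
\begin{equation*}
    \mathcal R^\varepsilon(s,r)=\Big(r-\tfrac12\Big)\left(\frac{\partial_r(I^\varepsilon+\widetilde R^\varepsilon)}{I^\varepsilon+\widetilde R^\varepsilon}-\frac{\partial_r I^\varepsilon}{I^\varepsilon}\right)=\Big(r-\tfrac12\Big)\,\frac{I^\varepsilon\,\partial_r\widetilde R^\varepsilon-\widetilde R^\varepsilon\,\partial_r I^\varepsilon}{I^\varepsilon\,\big(I^\varepsilon+\widetilde R^\varepsilon\big)},
\end{equation*}
where $\widetilde R^\varepsilon$ is the reparametrisation of the remainder $R^\varepsilon$ from \eqref{eq:remainderterm} and $I^\varepsilon+\widetilde R^\varepsilon=\widetilde\J_R^\varepsilon$.

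First I would prove the two uniform-in-$\varepsilon$ bounds
\begin{equation*}
    |\widetilde R^\varepsilon(s,r)|\le M\,\Big|r-\tfrac12\Big|^{6},\qquad |\partial_r\widetilde R^\varepsilon(s,r)|\le M\,\Big|r-\tfrac12\Big|^{5},
\end{equation*}
for a constant $M$ independent of $\varepsilon$ and of $(s,r)$. The first is a translation of \eqref{eq:boundremainder}: as recalled in the remark following Proposition~\ref{prop:formulaJRdJRint}, the constant there depends only on $\Lip(C_\circ)$ and on sup and Lipschitz bounds for the generalised trigonometric functions, all of which are uniformly bounded for $\varepsilon$ small by Lemma~\ref{lem:P_convergence} and Corollary~\ref{cor:C_convergence}; since moreover $\omega=2\pi_{\Omega^\circ_\varepsilon}(2r-1)$ with $2\pi_{\Omega^\circ_\varepsilon}\to 2\pi_{\Omega^\circ_0}$, we have $|\omega|\le C|r-\tfrac12|$ uniformly in $\varepsilon$. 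The second bound is obtained by differentiating the triple integral \eqref{eq:remainderterm} in $\omega$ under the integral sign: the boundary terms produced still vanish to order $\omega^{5}$ and the differentiated integrand remains controlled by the same uniformly bounded quantities --- this is where the hypothesis $k\ge3$ is used, since it makes $C_\circ^\varepsilon$ of class $C^{2}$ with $(C_\circ^\varepsilon)'$ equi-Lipschitz --- so that $|\partial_\omega R^\varepsilon(\phi,\omega)|\le M'|\omega|^{5}$ uniformly, whence the claimed bound for $\partial_r\widetilde R^\varepsilon$ via the chain rule.

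Next I would combine these with the bounds \eqref{eq:uniform_bds_I} established in the proof of Lemma~\ref{lem:convergence_quotient}, namely $c|r-\tfrac12|^{4}\le|I^\varepsilon|\le C|r-\tfrac12|^{4}$ and $|\partial_r I^\varepsilon|\le C|r-\tfrac12|^{3}$, uniformly in $\varepsilon$ for $|r-\tfrac12|$ small. The numerator of the fraction above is then $O(|r-\tfrac12|^{9})$ uniformly, and fixing a uniform threshold $\rho_0$ small enough that $|\widetilde R^\varepsilon|\le\tfrac12|I^\varepsilon|$ on $\{|r-\tfrac12|\le\rho_0\}$ forces $|I^\varepsilon(I^\varepsilon+\widetilde R^\varepsilon)|\ge\tfrac12|I^\varepsilon|^{2}\ge\tfrac{c^{2}}{2}|r-\tfrac12|^{8}$ there. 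Hence $|\mathcal R^\varepsilon(s,r)|\le \widetilde M|r-\tfrac12|^{2}$ for $|r-\tfrac12|\le\rho_0$, with $\widetilde M$ independent of $\varepsilon$, and the same bound holds for $\mathcal R^0$; so, given $\eta>0$, choosing $\rho\le\min\{\rho_0,\sqrt{\eta/(2\widetilde M)}\}$ yields $|\mathcal R^\varepsilon-\mathcal R^0|<\eta$ on $\{|r-\tfrac12|\le\rho\}$ uniformly in $\varepsilon$.

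Finally, on the complement --- with $r$ in a compact subset of $(0,1)\setminus\{\tfrac12\}$ and, by periodicity, uniformly in $s\in[0,1)$ --- I would conclude exactly as in Lemma~\ref{lem:convergence_quotient}: the functions $I^\varepsilon$, $\partial_r I^\varepsilon$, $\widetilde R^\varepsilon$ and $\partial_r\widetilde R^\varepsilon$ all converge uniformly as $\varepsilon\to0$ (Corollary~\ref{cor:C_convergence}, Lemma~\ref{lem:P_convergence}, and the integral expression \eqref{eq:remainderterm}), while the denominator $I^\varepsilon\,\widetilde\J_R^\varepsilon$ stays bounded below away from zero uniformly in $\varepsilon$ (using $(C_\circ^\varepsilon)'>0$ together with \eqref{eq:zeros_redjac} and \eqref{eq:uniform_convergence_red_jactilde}), so that $\mathcal R^\varepsilon\to\mathcal R^0$ uniformly there; patching this with the previous step completes the proof. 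The step I expect to be the real obstacle is the second estimate of the first paragraph, that is, proving $|\partial_\omega R^\varepsilon(\phi,\omega)|\le M'|\omega|^{5}$ with $M'$ uniform in $\varepsilon$: this demands a careful differentiation of \eqref{eq:remainderterm} and a check that every resulting boundary and interior term still carries the expected power of $\omega$; everything else is routine bookkeeping on top of the expansions already available from Lemma~\ref{lem:convergence_quotient}.
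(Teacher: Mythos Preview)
Your proof is correct and proceeds along the same line as the paper's: you write $\mathcal R^\varepsilon$ as the single fraction $(r-\tfrac12)\,(I^\varepsilon\partial_r\widetilde R^\varepsilon-\widetilde R^\varepsilon\partial_rI^\varepsilon)/\big(I^\varepsilon(I^\varepsilon+\widetilde R^\varepsilon)\big)$, establish the uniform-in-$\varepsilon$ bounds $|\widetilde R^\varepsilon|\le M|r-\tfrac12|^{6}$ and $|\partial_r\widetilde R^\varepsilon|\le M|r-\tfrac12|^{5}$, and combine these with \eqref{eq:uniform_bds_I} to get $|\mathcal R^\varepsilon|\le \widetilde M|r-\tfrac12|^{2}$ --- all exactly as in the paper. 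The only difference lies in the last step: the paper goes on to bound $|\partial_r\mathcal R^\varepsilon|\le M_4|r-\tfrac12|$ (which is where uniform control of $(C_\circ^\varepsilon)''$ enters) and concludes via Arzel\`a--Ascoli, whereas you split $(0,1)$ into a neighbourhood of $\tfrac12$ (where both $\mathcal R^\varepsilon$ and $\mathcal R^0$ are uniformly small) and its complement (where the denominator is bounded away from zero and all ingredients converge uniformly). Your ending is slightly more direct, as it dispenses with the extra derivative estimate; both are valid.
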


\begin{proof}
Firstly observe that, for the norm $\normdot_\varepsilon$, the remainder term \eqref{eq:remainderterm}, denoted by $R^\varepsilon$, can be bounded by $M_\varepsilon \omega^6$ around $\omega=0$, where $M_\varepsilon$ depends on the $L^\infty$-norms of $P^\varepsilon$, $Q^\varepsilon$ and $(C_\circ^\varepsilon)'$ which are all uniformly bounded in $\varepsilon$. Indeed: 
    \begin{equation}
    \begin{split}
        |R^\varepsilon(\varphi,\omega)| &\leq \Lip(C_\circ^\varepsilon)^3\int_\varphi^{\varphi+\omega}\int_\varphi^t\int_\varphi^s (t-s)(s-u)[2\|Q^\varepsilon\|_{L^\infty}\|P^\varepsilon\|_{L^\infty}(u-\varphi)+(t-\varphi)]\de u \de s \de t\\
        &\leq 2\Lip(C_\circ^\varepsilon)^3(2\|Q^\varepsilon\|_{L^\infty}\|P^\varepsilon\|_{L^\infty}+1)\int_\varphi^{\varphi+\omega}\int_\varphi^t\int_\varphi^s (t-s)(s-u)(t-\varphi)\de u \de s \de t\\
        &\leq 2\Lip(C_\circ^\varepsilon)^3(2\|Q^\varepsilon\|_{L^\infty}\|P^\varepsilon\|_{L^\infty}+1)\int_\varphi^{\varphi+\omega}\int_\varphi^t\int_\varphi^s (t-s)(s-u)(t-\varphi)\de u \de s \de t\\
        &\leq \frac{1}{72}\Lip(C_\circ^\varepsilon)^3(2\|Q^\varepsilon\|_{L^\infty}\|P^\varepsilon\|_{L^\infty}+1)\omega^6, \qquad\forall\, \varphi,\omega.
    \end{split}
    \end{equation}
    Hence, after reparametrization, there exists $M_1>0$ such that 
    \begin{equation}
    \label{eq:contazzi1}
        |\widetilde R^\varepsilon(s,r)|\leq M_1\left|r-\frac{1}{2}\right|^6,\qquad\text{on } [0,1)\times(0,1).
    \end{equation}
    Reasoning as above, one can obtain a similar estimate for the $r$-derivative, namely there exists $M_2>0$ such that
    \begin{equation}
    \label{eq:contazzi2}
        |\partial_r\widetilde R^\varepsilon(s,r)|\leq M_2\left|r-\frac{1}{2}\right|^5,\qquad\text{on } [0,1)\times(0,1).
    \end{equation}
    Now, by definition of $\mathcal R^\varepsilon(s,r)$, we see that 
    \begin{equation}
        \mathcal R^\varepsilon(s,r)=\frac{(r-\frac12)\left(I^\varepsilon(s,r)\partial_r\widetilde R^\varepsilon(s,r)-\widetilde R^\varepsilon(s,r)\partial_rI^\varepsilon(s,r))\right)}{I^\varepsilon(s,r)(I^\varepsilon(s,r)+\widetilde R^\varepsilon(s,r))}.
    \end{equation}
    The right-hand side of the above can be uniformly bounded combining \eqref{eq:contazzi1}, \eqref{eq:contazzi2} and \eqref{eq:uniform_bds_I}. Thus, we conclude that there exists a uniform constant $M_3>0$ such that 
    \begin{equation}
        |\mathcal R^\varepsilon(s,r)|\leq M_3\left|r-\frac{1}{2}\right|^2,\qquad\text{on } [0,1)\times(0,1).
    \end{equation}
    With completely similar arguments, one can show that there exits a constant $M_4>0$, depending on the uniform bounds of the $L^\infty$-norms of $P^\varepsilon$, $Q^\varepsilon$, $(C_\circ^\varepsilon)'$ and $(C_\circ^\varepsilon)''$, such that 
     \begin{equation}
        |\partial_r\mathcal R^\varepsilon(s,r)|\leq M_4\left|r-\frac{1}{2}\right|,\qquad\text{on } [0,1)\times(0,1).
    \end{equation}
    Thus, applying Arzel\`a-Ascoli, we conclude the proof.
\end{proof}

\begin{theorem}\label{thm:continuouscurvexp}
    Given any $N^\star>5$ and $k \in (2,+\infty)$, there exists a $C^k$ and strongly convex norm $\normdot$ such that the curvature exponent $N_\mathrm{curv}$ of the associated \sF Heisenberg group $(\hei,\di,\Leb^3)$ is equal to $N^\star$.
\end{theorem}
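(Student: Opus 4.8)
The plan is to reduce the statement to a connectedness argument that combines the rigidity result (Theorem~\ref{thm:rigidityinsection}) with the continuity result (Theorem~\ref{thm:continuity}). I will construct a one--parameter family $\{\normdot_\lambda\}_{\lambda\in[0,1)}$ of $C^\infty$ (hence $C^k$) and strongly convex norms on $\R^2$ such that: (i) $\normdot_0$ is the Euclidean norm, so $(\hei,\di_0,\Leb^3)$ is the sub-Riemannian Heisenberg group and $N_{\mathrm{curv}}^0=5$ by \cite{juillet2009}; (ii) $\lambda\mapsto\normdot_\lambda$ is continuous on $[0,1)$ for the $C^m$--strong topology of Definition~\ref{def:strong_convergence}, for every integer $m$; (iii) $N_{\mathrm{curv}}^\lambda\to+\infty$ as $\lambda\to1^-$. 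Granting this, Theorem~\ref{thm:continuity} (applied with $k=3$, which is legitimate since a $C^\infty$--strongly convergent family is in particular $C^3$--strongly convergent) shows that $\lambda\mapsto N_{\mathrm{curv}}^\lambda$ is continuous on the connected set $[0,1)$; its image is then an interval, contained in $[5,+\infty)$ by Theorem~\ref{thm:rigidityinsection}, containing $5$ by (i), and unbounded above by (iii). Hence this image equals $[5,+\infty)$, so there is $\lambda^\star\in(0,1)$ with $N_{\mathrm{curv}}^{\lambda^\star}=N^\star$, and $\normdot:=\normdot_{\lambda^\star}$ is the desired norm.

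The family is built by prescribing the behaviour of the angle correspondence. Fix a small constant $L>0$ and a smooth decreasing function $\epsilon\colon[0,1)\to(0,1]$ with $\epsilon(0)=1$ and $\epsilon(\lambda)\to0^+$ as $\lambda\to1^-$. I arrange $\normdot_\lambda$ so that $C_\circ^\lambda$ is a $C^\infty$ diffeomorphism with $(C_\circ^\lambda)'$ equal to a constant $\approx 1$ away from a fixed arc of length $L$, dipping smoothly down to $\approx\epsilon(\lambda)$ on that arc, with transitions confined to sub--arcs of length $\le\epsilon(\lambda)^2L$. Such a family is obtained from a smooth family of origin--symmetric $C^\infty$ convex bodies $\Omega_\lambda$ --- the unit balls of the $\normdot_\lambda$ --- obtained from the Euclidean disk by a localised, origin--symmetric, smooth reshaping that introduces a narrow curvature peak of height tending to $+\infty$ near a fixed pair of antipodal boundary directions as $\lambda\to1^-$, while keeping the curvature bounded below by a positive constant independent of $\lambda$. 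For $\lambda<1$, $\normdot_\lambda$ is then $C^\infty$ and strongly convex; $\normdot_0$ is Euclidean; the family is $C^m$--strongly continuous on $[0,1)$ (smoothness in $\lambda$ of $\Omega_\lambda$ passes to $\normdot_\lambda^2$ and, via Remark~\ref{rmk:dual_is_bello}, Lemma~\ref{lem:P_convergence} and Corollary~\ref{cor:C_convergence}, to $C_\circ^\lambda$); and $\Lip(C_\circ^\lambda)$ is bounded uniformly in $\lambda$ (uniform strong convexity of $\normdot_\lambda$) while $\essinf(C_\circ^\lambda)'=\epsilon(\lambda)\to0$ (loss of strong convexity of the dual norm, i.e.\ of the curvature upper bound of $\partial\Omega_\lambda$). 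This yields (i) and (ii).

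It remains to prove (iii), which is the technical heart and, I expect, the main obstacle. Fix $\lambda$ close to $1$, write $\epsilon:=\epsilon(\lambda)$, let $[\phi,\phi+L]$ be (a translate of) the arc where $(C_\circ^\lambda)'\approx\epsilon$, and set $b:=\epsilon^2L$, $\omega:=L+b$, so that $(\phi,\omega)\in\mathcal U$. By Proposition~\ref{prop:diffcharacMCP}, $N_{\mathrm{curv}}^\lambda\ge N^\lambda(\phi,\omega)=1+\omega\,\partial_\omega\J_R^\lambda(\phi,\omega)/\J_R^\lambda(\phi,\omega)$, which I estimate through the decomposition $\J_R^\lambda=P^\lambda+R^\lambda$ of Proposition~\ref{prop:formulaJRdJRint}. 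Since $(C_\circ^\lambda)'\approx\epsilon$ on $[\phi,\phi+L]$ and $(C_\circ^\lambda)'\approx1$ on $[\phi+L,\phi+\omega]$ (the transition occupying a sub--interval of length $\le b/2$), a direct evaluation of the integrals defining $P^\lambda$ and $\partial_\omega P^\lambda$ gives $P^\lambda(\phi,\omega)=\tfrac{1}{12}\epsilon^2L^4(1+O(\epsilon))$ and $\omega\,\partial_\omega P^\lambda(\phi,\omega)=\tfrac{1}{3}\epsilon L^4(1+O(\epsilon))$, hence $\omega\,\partial_\omega P^\lambda/P^\lambda=\tfrac{4}{\epsilon}(1+O(\epsilon))$. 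For the remainder I improve on \eqref{eq:boundremainder}: the bracketed factor in the integrand of \eqref{eq:remainderterm} vanishes at $u=\phi$ with $u$--derivative $1$ there (by \eqref{eq:pytagorean} and Proposition~\ref{prop:difftrig}) and second $u$--derivative $\le C\Lip(C_\circ^\lambda)$, so it is $\le C[(u-\phi)+(t-\phi)]$ with $C$ uniform in $\lambda$ --- crucially $(C_\circ^\lambda)''$ never enters; combining this with $|t-s|,|s-u|\le t-\phi$ and $\int_\phi^{\phi+\omega}(C_\circ^\lambda)'=\Delta_\omega C_\circ^\lambda(\phi)=\epsilon L(1+O(\epsilon))$ yields $|R^\lambda(\phi,\omega)|\le C\omega^3(\Delta_\omega C_\circ^\lambda(\phi))^3=O(\epsilon^3)$ and $|\partial_\omega R^\lambda(\phi,\omega)|\le C\omega^3(\Delta_\omega C_\circ^\lambda(\phi))^2=O(\epsilon^2)$. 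Thus $|R^\lambda|$ is negligible against $P^\lambda$ and $\omega|\partial_\omega R^\lambda|$ is bounded relative to $P^\lambda$ (recall $L$ is fixed); since $\J_R^\lambda=P^\lambda+R^\lambda>0$ and $\partial_\omega P^\lambda>0$, we obtain
\[
N^\lambda(\phi,\omega)-1=\frac{\omega\,\partial_\omega P^\lambda+\omega\,\partial_\omega R^\lambda}{P^\lambda+R^\lambda}\ \ge\ \frac{\omega\,\partial_\omega P^\lambda-\omega|\partial_\omega R^\lambda|}{P^\lambda+|R^\lambda|}=\frac{4}{\epsilon}\,(1+O(\epsilon)),
\]
which tends to $+\infty$ as $\lambda\to1^-$. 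This proves (iii), and with it the theorem.

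The hard part is precisely the quantitative divergence in the last display: it needs both the exact leading asymptotics of $P^\lambda$ at the degenerate scale $\omega\sim L$, $(C_\circ^\lambda)'\sim\epsilon$, and a bound on $R^\lambda$ which --- unlike \eqref{eq:boundremainder}, whose constant blows up --- stays uniform as the norm degenerates; this is exactly why the construction must keep $\Lip(C_\circ^\lambda)$ bounded, forcing only $\essinf(C_\circ^\lambda)'\to0$ and never $\sup(C_\circ^\lambda)'\to\infty$. Making the family with the prescribed $C_\circ^\lambda$ fully rigorous (ensuring strong convexity and smooth dependence on $\lambda$) is a further, more routine, point of care.
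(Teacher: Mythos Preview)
Your overall strategy --- connect the Euclidean norm to a norm with arbitrarily large curvature exponent by a continuous path and apply the intermediate value theorem via Theorem~\ref{thm:continuity} --- is exactly the paper's strategy. The execution differs in one essential point: how to produce norms with $N_{\mathrm{curv}}$ as large as desired. The paper interpolates linearly on the dual side, $f^t=t\,\ell^q+(1-t)\,\ell^2$ with $q$ large, so that $\normdot^t:=(f^t)_*$ is $C^{\lfloor q\rfloor}$ and strongly convex for $t\in[0,1)$ and approaches $\ell^p$ as $t\to1$; then it simply cites \cite{borzatashiro} (which gives $N_{\mathrm{curv}}>2q+1$ for the $\ell^p$ Heisenberg group) together with the lower semicontinuity Lemma~\ref{lem:lsc_curv_exp} at the endpoint. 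That is a two-line argument given those inputs. Your route is self-contained --- you avoid \cite{borzatashiro} --- and instead attempts a direct Jacobian computation showing $N^\lambda(\phi,\omega)\to\infty$ for a bespoke family. That buys independence from the $\ell^p$ result (and would in fact yield a $C^\infty$ norm, anticipating Theorem~\ref{thm:continuouscurvexpSMOOOOOOTH}) at the cost of substantially more work.

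On the correctness of your route: the Jacobian estimate is essentially sound. Your $\omega\,\partial_\omega P/P\sim 4/\epsilon$ computation is right, and the remainder bound $|R|\lesssim\omega^3(\Delta_\omega C_\circ)^3$ does hold because the bracket in \eqref{eq:remainderterm} is $O(t-\phi)$ uniformly in $\lambda$ --- note however that your claim that the bracket \emph{vanishes} at $u=\phi$ is false: by \eqref{eq:pytagorean} it equals $-(t-\phi)$ there, and the correct observation is just that it is $O(t-\phi)$, which is all you use. The genuine gap is the construction of the family $\{\normdot_\lambda\}$ itself. You assert the existence of smooth, uniformly strongly convex norms whose angle correspondence $C_\circ^\lambda$ has a prescribed profile (constant $\approx1$ off a fixed arc of length $L$, dipping to $\approx\epsilon$ on it, with transitions of width $\le\epsilon^2L$), and you link this heuristically to a curvature spike on $\partial\Omega_\lambda$. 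But $C_\circ=P^{-1}\circ\diff\normdot_*\circ Q$ is only indirectly controlled by the curvature of $\partial\Omega$, and you give no actual construction --- neither an explicit $\Omega_\lambda$ nor a verification that the resulting $C_\circ^\lambda$ has the claimed profile to the precision you need (the $\epsilon^2L$ transition width is what makes your $P^\lambda$ and $\partial_\omega P^\lambda$ asymptotics work). This is not ``more routine'' than the divergence estimate; it is where your argument is incomplete. The paper sidesteps this entirely by leaning on a known endpoint.
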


\begin{proof}

    Let $p\in (1,2)$ and let $q\in (2,\infty)$ be its conjugate exponent. Choose $p$ sufficiently close to $1$ so that $q\geq \min\{k,3\}$ and $2q+1\geq N^\star$. Consider the curve of norms 
    \begin{equation}
    \label{eq:construction}
        [0,1]\ni t\mapsto \normdot^t:=f^t_*,\qquad\text{with }f^t:=t\, \ell^q+(1-t)\ell^2,
    \end{equation}
    where $\ell^q$ denotes the standard $\ell^q$-norm and $\ell^2$ is the classical Euclidean norm. Then, for every $t\in [0,1)$, $f^t$ is strongly convex and of class $C^{\lfloor q \rfloor}$ and $t\mapsto f^t$ is a continuous curve of norms with respect to the topology of $C^{\lfloor q \rfloor}$-strongly convergence (with $\lfloor q \rfloor\geq \min\{k,3\}$). Therefore, according to Remark \ref{rmk:dual_is_bello}, $t\mapsto \normdot^t$ is a continuous curve of strongly convex and $C^{\lfloor q \rfloor}$ norms on $[0,1)$. Moreover, the endpoints of $t\mapsto\normdot^t$ are respectively the $\ell^2$-norm at $t=0$ and the $\ell^p$-norm at $t=1$. By Theorem \ref{thm:continuity} (which applies since $\lfloor q \rfloor\geq 3$) and by Lemma \ref{lem:lsc_curv_exp}, we have
    \begin{equation}
    \label{eq:crazy_continuities}
        \lim_{t\to t_0}N_{\mathrm{curv}}^t=N_{\mathrm{curv}}^{t_0},\quad\forall\, t_0\in (0,1)\qquad\text{and}\qquad  N_
{\mathrm{curv}}^1\leq\liminf_{t\to 1}N_{\mathrm{curv}}^t.
    \end{equation}
    But now, since at time $t=1$, $\normdot^1=\ell^p$, by \cite[Thm.\ A]{borzatashiro}, $N_{\mathrm{curv}}^1>2q+1\geq N^\star$. Hence, from \eqref{eq:crazy_continuities}, we have that $t\mapsto\normdot^t$ is a curve of norms whose curvature exponents connect continuously $N_{\mathrm{curv}}^0=5$ and $2q+1$. Thus, there exists $t_0$ such that $N_{\mathrm{curv}}^{t_0}=N^\star$, and setting $\normdot:=\normdot^{t_0}$, we find a $C^k$ and strongly convex norm with the desired curvature exponent.
    \qedhere
\end{proof}

\begin{remark}\label{rmk:arbitrarybignorm}
    Given any $N^\star\geq 5$, it is possible to construct a $C^\infty$ and strongly convex norm such that the corresponding \sF Heisenberg group $(\hei,\di,\Leb^3)$ has $N_\mathrm{curv} \geq N^\star$. Here we outline a possible construction. For an integer $h\geq 3$, let us fix a function 
    \[
    g(y)=1-y^2-\frac{h^h}{2}y^h.
    \]
    Note that for $y\in[0,\frac{1}{h}]$,
    \[
    g(y)>0,\quad g'(y)\leq 0,\quad g^{\prime\prime}(y)<0,
    \]
    so there is a $C^\infty$ and strongly convex norm $\normdot_{\ast }^h$ such that its unit sphere locally coincides with the level set $x-g(y)=0$ for $y\in[0,\frac{1}{h}]$. 
    For each $y\in[0,\frac{1}{h}]$, we can associate the generalized angle $\omega(y)$ so that $(g(y),y)=(\cosomp(\omega(y)),\sinomp(\omega(y)))$.
We can recover the functions $\omega$, $\sinom(\omega_{\circ}),\cosom(\omega_{\circ})$ and $\omega_\circ$ with the function $g(y)$ by using their geometric characterization, see Figure \ref{fig:convextrig1} and \ref{fig:convextrig2}.
Combined with $C_\circ^\prime(\omega)=\frac{\diff \omega_\circ}{\diff y}\frac{\diff y}{\diff \omega}$,
we can write
\begin{align*}
    \J_R(0,\omega)|_{\omega=\omega(y)}={}&[2-\cosomp(\omega)-\cosom(\omega_\circ)-\omega\sinom(\omega)]|_{\omega=\omega(y)}\\
    ={}&2-g(y)-\frac{1}{g(y)-yg'(y)}-\left(2\int_0^yg(t)\diff t-yg(y)\right)\frac{-g'(y)}{g(y)-yg'(y)},
\end{align*}
and
\begin{align*}
    \omega\partial_\omega\J_R(0,\omega)|_{\omega=\omega(y)}={}&\omega C_\circ^\prime(\omega)\left[\sinomp(\omega)-\omega\cosomp(\omega)\right]|_{\omega=\omega(y)}\\
    ={}&\left(2\int_0^yg(t)\diff t-yg(y)\right)\frac{-g^{\prime\prime}(y)}{g(y)-yg'(y)}\left[y-\left(2\int_0^yg(t)\diff t-yg(y)\right)g(y).\right]
\end{align*}
    One can show that, for every $h\in\N$, the angle $\omega_h:=\omega(1/h)$ (i.e. $y=1/h$) satisfies 
    \begin{equation}
        \label{eq:finalratio}
        \frac{\omega_h\partial_\omega\J_R(0,\omega_h)}{\J_R(0,\omega_h)}=\frac{h}{4}+R(h),
    \end{equation}
    where $\omega_h\to 0$ and $R(h)/h\to 0$ as $h\to \infty$. Then, the ratio \eqref{eq:finalratio} diverges to $+\infty$ as $h\to+\infty$ and the conclusion follows from Proposition \ref{prop:diffcharacMCP}. 
\end{remark}

This remark allows us to improve the regularity of norms in Theorem \ref{thm:continuouscurvexp} and proves Theorem \ref{thm:INTROcontinuity}.

\begin{theorem}\label{thm:continuouscurvexpSMOOOOOOTH}
    Given any $N^\star>5$, there exists a $C^\infty$ and strongly convex norm $\normdot$ such that the curvature exponent $N_\mathrm{curv}$ of the associated \sF Heisenberg group $(\hei,\di,\Leb^3)$ is equal to $N^\star$.
\end{theorem}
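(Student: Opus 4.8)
The plan is to follow the strategy of Theorem~\ref{thm:continuouscurvexp}, replacing the only non-smooth ingredient in that argument — the $\ell^p$ norm sitting at the right endpoint of the interpolating curve, which is merely $C^1$ — by a genuinely $C^\infty$ norm produced by Remark~\ref{rmk:arbitrarybignorm}. Concretely, I would first use Remark~\ref{rmk:arbitrarybignorm} to fix a $C^\infty$ and strongly convex norm $\nu$ on $\R^2$ whose associated \sF Heisenberg group $(\hei,\di_\nu,\Leb^3)$ satisfies $N_{\mathrm{curv}}(\nu)\geq N^\star$; since $\nu$ is in particular $C^{1,1}$ and strongly convex, we also have $N_{\mathrm{curv}}(\nu)<\infty$ by \cite[Thm.\ 1.1]{borza2024measure}.

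Next I would connect the Euclidean norm $|\cdot|$ to $\nu$ through the curve $[0,1]\ni t\mapsto\normdot^t$ determined by $(\normdot^t)^2:=(1-t)\,|\cdot|^2+t\,\nu^2$. The routine verifications to carry out are: (i) each $\normdot^t$ is a genuine norm, not just a $1$-homogeneous function — homogeneity is immediate, and subadditivity follows by applying the triangle inequality for the weighted Euclidean norm $(a,b)\mapsto\sqrt{(1-t)a^2+t b^2}$ on $\R^2$ to the pairs $(\,|x|,\nu(x)\,)$ and $(\,|y|,\nu(y)\,)$; (ii) $(\normdot^t)^2$ is $C^\infty$ on $\R^2\setminus\{0\}$ and strongly convex, the latter because for $t<1$ it is the sum of the strongly convex function $(1-t)|\cdot|^2$ and the convex function $t\,\nu^2$, while for $t=1$ it equals $\nu^2$; (iii) the curve is continuous for $C^k$-strong convergence for every $k$, since $(\normdot^t)^2-(\normdot^{t_0})^2=(t-t_0)(\nu^2-|\cdot|^2)$ tends to $0$ in $C^k_{\mathrm{loc}}(\R^2\setminus\{0\})$ as $t\to t_0$. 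Its endpoints are $\normdot^0=|\cdot|$ and $\normdot^1=\nu$.

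Then I would set $N_{\mathrm{curv}}^t:=N_{\mathrm{curv}}(\hei,\di_{\normdot^t},\Leb^3)$, which is finite for every $t\in[0,1]$ because $\normdot^t$ is $C^{1,1}$ and strongly convex, and apply Theorem~\ref{thm:continuity} — legitimate since every $\normdot^t$, including the two endpoints, is $C^k$ strongly convex with $k\geq 3$ — along arbitrary sequences $t_n\to t_0$ in $[0,1]$ to conclude that $t\mapsto N_{\mathrm{curv}}^t$ is continuous on the whole interval $[0,1]$; in particular one does not even need Lemma~\ref{lem:lsc_curv_exp} here, in contrast with the proof of Theorem~\ref{thm:continuouscurvexp}. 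Since $\normdot^0=|\cdot|$ is induced by a scalar product, $N_{\mathrm{curv}}^0=5$ by Theorem~\ref{thm:rigidityinsection} (equivalently by \cite{juillet2009}), whereas $N_{\mathrm{curv}}^1=N_{\mathrm{curv}}(\nu)\geq N^\star>5$. The intermediate value theorem then provides $t^\star\in(0,1]$ with $N_{\mathrm{curv}}^{t^\star}=N^\star$, and $\normdot:=\normdot^{t^\star}$ is the desired $C^\infty$ and strongly convex norm.

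The main (and essentially only) delicate point I anticipate is step (i)–(ii): one must ensure that the quadratic interpolation $(\normdot^t)^2=(1-t)|\cdot|^2+t\,\nu^2$ genuinely produces a norm that stays strongly convex for every $t\in[0,1]$, rather than an arbitrary convex function — linear interpolation of the norms themselves, or of their unit balls, would be problematic — but this reduces to the elementary Minkowski inequality recalled above. Everything of substance, namely the existence of a smooth norm with arbitrarily large curvature exponent and the continuity of the curvature exponent under $C^k$-strong convergence, has already been established in Remark~\ref{rmk:arbitrarybignorm} and Theorem~\ref{thm:continuity}, so this theorem is essentially their combination through the intermediate value theorem, exactly in the spirit of Theorem~\ref{thm:continuouscurvexp}.
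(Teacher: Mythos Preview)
Your proposal is correct and follows the same strategy as the paper: build a continuous curve of $C^\infty$ strongly convex norms joining the Euclidean norm to a smooth norm with curvature exponent at least $N^\star$ (supplied by Remark~\ref{rmk:arbitrarybignorm}), invoke Theorem~\ref{thm:continuity} to get continuity of $t\mapsto N_{\mathrm{curv}}^t$, and conclude by the intermediate value theorem.

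The only difference is in the interpolation device. The paper, mirroring \eqref{eq:construction}, interpolates linearly between the \emph{dual} norms, $f^t := t\,\normdot_*^h + (1-t)\,\ell^2$, and then sets $\normdot^t := (f^t)_*$; the smoothness and strong convexity of $\normdot^t$ are then read off from Remark~\ref{rmk:dual_is_bello}. You instead interpolate the \emph{squares} of the primal norms, $(\normdot^t)^2 := (1-t)|\cdot|^2 + t\,\nu^2$, which is slightly more direct: the regularity and strong convexity are immediate, and you correctly check the triangle inequality via the Minkowski trick. Your observation that Lemma~\ref{lem:lsc_curv_exp} is no longer needed (since both endpoints are now $C^\infty$, so Theorem~\ref{thm:continuity} applies on the whole closed interval) is a genuine, if minor, simplification over the argument as literally transcribed from Theorem~\ref{thm:continuouscurvexp}.
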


\begin{proof}
    We repeat the same argument of Theorem \ref{thm:continuouscurvexp},
    replacing the $\ell^q$-norm in \eqref{eq:construction} with the smooth and strongly convex norm $\normdot^h_{*}$ of Remark \ref{rmk:arbitrarybignorm}, with $h$ sufficiently big. Note that the norm $\normdot$, found with the construction, is $C^\infty$ and strongly convex, as it is the dual of a $C^{\infty}$ and strongly convex norm, cf.\ Remark \ref{rmk:dual_is_bello}. 
\end{proof}


\bibliographystyle{alpha} 
\bibliography{bibliography}

\end{document}